\newtheorem{proposition}{Proposition}[section]
\newtheorem{theorem}{Theorem}
\newtheorem{lemma}[proposition]{Lemma}
\theoremstyle{definition}
\newtheorem{remark}[proposition]{Remark}
\newtheorem{assumption}[proposition]{Assumption}
\newtheorem{example}[proposition]{Example}
\numberwithin{equation}{section}
\begin{document}
\title{\LARGE A unified framework for limit results in Chemical Reaction Networks on multiple time-scales}

\author{\sc Timo Enger, Peter Pfaffelhuber}

\date{\today}

\maketitle

\begin{abstract}
\noindent
If $(X^N)_{N=1,2,...}$ is a sequence of Markov processes which solve the martingale problems for some operators $(G^N)_{N=1,2,...}$, it is a classical task to derive a limit result as $N\to\infty$, in particular a weak process limit with limiting operator $G$. For slow-fast systems $X^N = (V^N, Z^N)$ where $V^N$ is slow and $Z^N$ is fast, $G^N$ consists of two (or more) terms, and we are interested in weak convergence of $V^N$ to some Markov process $V$. In this case, for some $f\in \mathcal D(G)$, the domain of $G$, depending only on $v$, the limit $Gf$ can sometimes be derived by using some $g_N\to 0$ (depending on $v$ and $z$), and study convergence of $G^N (f + g_N) \to Gf$. We develop this method further in order to obtain functional Laws of Large Numbers (LLNs) and Central Limit Theorems (CLTs). We then apply our general result to various examples from Chemical Reaction Network theory. We show that we can rederive most limits previously obtained, but also provide new results in the case when the fast-subsystem is first order. In particular, we allow that fast species to be consumed faster than they are produced, and we derive a CLT for Hill dynamics with coefficient~2.
\end{abstract}

\section{Introduction}
The motivation for our work and the resulting limiting results comes from recent publications on multi-scale (Markovian) Chemical Reaction Networks (CRNs). Here, multi-scale refers to the fact that  reaction rates differ by orders of magnitude; see e.g.\ \cite{AndersonKurtz2015}. In contrast, limit results for single-scale systems, where all species are in the same (high) abundance, have basically been solved by early work of Tom Kurtz (\cite{Kurtz1970, Kurtz1971}), providing both, functional Laws of Large Numbers (LLNs), with the solution of a system of Ordinary Differential Equations (ODEs) as limit, and functional Central Limit Theorems (CLTs), leading to the solution of a system of It\^o Stochastic Differential Equations (SDEs) as limit. For multi-scale reaction networks, frequently the distinction between deterministic (if all species abundances are large) and stochastic networks (if all species abundances are small) is made. In deterministic models, the quasi-steady-state assumption is most important; see e.g.\ \cite{Heineken2019}. In early papers on  stochastic models \cite{HaseltineRawlings2002, RaoArkin2003}, the focus was on simulation techniques of slow-fast systems. Rather than distinguishing strictly between deterministic and stochastic models, we will use the notions introduced in \cite{BallKurtzPopovicRempala2006}, where some species are in large and some in small abundance, and reaction rates still differ by orders of magnitude. Such multi-scale systems still pose interesting fundamental questions today. LLNs in these systems have been derived using techniques involving stochastic averaging \cite{Kurtz1992} and approximations of Poisson processes by Brownian motion; see \cite{BallKurtzPopovicRempala2006, KangKurtz2013}. The  case of LLNs involving fast intermediate species is treated  in \cite{CappellettiWiuf2016}, \cite{SaezWiufFeliu2017} using probabilistic arguments, i.e.\ exit times and probabilities of the fast sub-system. Here, fast species in low copy number, are produced on a slower time-scale than they are consumed, and are called intermediate. This concept was recently generalized to non-interacting species in \cite{HoesslyWiufXia2021}. 
Concerning functional CLTs in such multi-scale systems, \cite{KangKurtzPopovic2014} use the technique of stochastic averaging, solutions of Poisson equations and the martingale central limit theorem for proving limit results on CRNs on two or three time-scales. Their results have been used in various contexts, e.g.\ for a CRN involving the heat shock protein of {\em E.\ coli} \cite{Kang2012}. In this theory, a central role is played by the averaging condition (their Condition 2.4), which assumes a unique equilibrium of fast species when the slow species are fixed. A related condition is the balance condition from \cite{KangKurtz2013}, which assumes that all species -- in particular fast species -- are produced and consumed on the same time-scale. Both, the averaging condition and the species balance condition, are violated by the intermediate species from \cite{CappellettiWiuf2016}, since they  are consumed faster than they are produced and hence no limit averaging distribution exists. 

In our setup, we will assume that  species in low abundance are fast, and species in high abundance are slow. For such systems, our goal is threefold: We want to (i) provide a general framework which provides the possibility to derive limit results for all examples in the above mentioned papers, in particular (ii) provide a method to derive functional CLTs for the case of intermediate species, and (iii) obtain CLTs in cases not treated so far. In order to achieve this, we give in Theorem~\ref{T1} a general result which requires only an application of a well-known limit result for Markov processes (Theorem~2.1 in \cite{Kurtz1992}). Applying this general result to specific cases of CRNs leads to a {\em recipe} how to obtain LLNs and CLTs, described in Section~\ref{ss:32}. We provide concrete computations on previous examples and how our theory applies in these contexts; see Section~\ref{S:examples}. In particular, we will treat Michaelis-Menten kinetics (one of the main examples in \cite{BallKurtzPopovicRempala2006, KangKurtzPopovic2014}) in Section~\ref{ss:MM}, but treat also the main example from \cite{CappellettiWiuf2016} in Section~\ref{ss:CWmain}. We add to the list of examples an extension of Michaelis-Menten kinetics, which is known in the literature as dynamics with Hill coefficient~2; see Examples~\ref{ex:simHill1}--\ref{ex:simHill5} and  Section~\ref{ss:Hill}.

\section{Main Results}
Before we provide with Theorem~\ref{T1} our main general result, we give some heuristic arguments and an example. Recall that a stochastic process $V$ with Polish state space $E$ solves the $(G, \mathcal D(G))$ martingale problem for some linear $G: \mathcal D(G) \subseteq \mathcal C_b(E) \to \mathcal C_b(E)$ if 
\begin{align*}
  \Big(f(V_t) - \int_0^t G f(V_s) ds\Big)_{t\geq 0}
\end{align*}
is a martingale for all $f\in\mathcal D(G)$. Assume that there is a unique (in law) solution, i.e.\ the martingale problem is well-posed and $V$ is unique and Markovian, if $\mathcal D(G)$ is large enough. 

Our goal is to obtain convergence of a sequence $V^N$ to some $V$ as $N\to\infty$. Here, $V^N$ is one coordinate in the Markov process $(V^N, Z^N)$ with state space $E\times F$ (for $E, F$ Polish), which is the solution of a martingale problem with operators of the form
\begin{align*}
  G^N & = G^N_0 + N^{1/2} G^N_1 + N G^N_2.
\end{align*}
We will call $V^N$ slow and $Z^N$ fast, where {\em slow} and {\em fast} means that $G^N_2f = 0$ if $f \in \mathcal D(G^N) \supseteq \mathcal D(G)$ only depends on $v$ (i.e.\ generator term of the fastest dynamics, $G_2^N$ describes only the dynamics of $Z^N$). For this convergence, we assume that for every $f \in \mathcal D(G)$ (in particular, $f$ only depends on $v$), there are $g_N,h_N \in \mathcal D(G^N)$ such that (recall $G_2^Nf = 0$)
\begin{align}\label{eq:12}
    Gf(v) \approx G_0^N f(v) + (G_1^N g_N + G_2^N h_N)(v,z) + N^{1/2} (G_1^N f + G_2^Ng_N)(v,z)
\end{align}
in the sense that for all $T>0$
\begin{align*}
& \mathbb E\Big[\int_0^T |\varepsilon_f^N(V_t^N, Z_t^N)|dt\Big]  \xrightarrow{N\to\infty} 0 \text{ for}
\\ \notag
& \varepsilon_f^N(v,z):= G_0^Nf(v) + (G^N_1g_N+G_2^N h_N)(v,z) + N^{1/2}(G^N_1f+G^N_2 g_N)(v,z) -Gf(v).
\end{align*}
Then, provided tightness of $V^N$ and some boundedness restrictions of $g_N, h_N, G_0^N g_N, G_0^Nh_N$ and $G_1^Nh_N$, and $V'$ is an accumulation point of $V^N$, it is reasonable to conclude that
\begin{equation} \notag %\label{eq:14}
\begin{aligned}
    f(V^N_t) & + (N^{-1/2}g_N + N^{-1}h_N)(V_t^N, Z_t^N) - \int_0^t G^N(f + N^{-1/2}g_N + N^{-1}h_N)(V_s^N, Z_s^N)ds
    \\ & \approx f(V_t^N) -  \int_0^t (G_0^Nf + G^N_1g_N+G_2^N h_N + N^{1/2}(G^N_1f+G^N_2 g_N))(V_s^N, Z_s^N)ds \\ & \approx f(V_t') - \int_0^t Gf(V_s') ds 
\end{aligned}
\end{equation}
is a martingale. If the martingale problem for $(G, \mathcal D(G))$ is well-posed with unique solution $V$, this then shows convergence $V^N\xRightarrow{N\to\infty} V$, provided the initial conditions converge. 

\begin{example}[Convergence to Brownian Motion]
  For applying the result, let us have a look at \eqref{eq:12}. Since the term $N^{1/2}(G_1^Nf + G_2^Ng_N)$ is the leading term on the right-hand-side, we first look for $g_N$ such that $G_1^Nf + G_2^Ng_N = 0$, and then for $h_N$ such that $G_1^N g_N + G_2^N h_N$ only depends on $v$. 

  Let us consider a simple example outside of Chemical Reaction Networks, which shows that the above method can work: Let $X^N = (V^N,Z^N)$ be the Markov process with state space $\mathbb R \times \mathbb R$ and generator, for some probability measure $\pi$ with mean zero and variance $\tfrac 12$, writing $f'$ for the derivative with respect to the first coordinate,
  \begin{align*}
      G^Nf(v,z) = N^{1/2} \underbrace{z f'(v,z)}_{=G_1f(v,z)} + N \underbrace{\int (f(v,z') - f(v,z)) \pi(dz')}_{=G_2f(v,z)},
  \end{align*}
  i.e.\ $V^N$ is a continuous random walk, which changes slope at rate $N$. Clearly, $V^N$ converges to a Brownian motion. In order to see this, for $f\in \mathcal C^\infty_b(\mathbb R)$, we choose 
  $$g_N(v,z) := g(v,z) := z f'(v), \text{ which implies }G_1f + G_2 g_N = 0$$ 
  (since $\pi$ has mean zero). Next, note that $G_1 g(v,z) = z^2 f''(v)$, so we choose $$h_N(v,z) := h(v,z)=z^2 f''(v), \text{ which implies }(G_1g + G_2 h)(v,z) = Gf(v) := \tfrac 12 f''(v),$$ leading to $\varepsilon_N^f=0$ in \eqref{eq:13}. Hence, provided tightness is shown, the desired convergence follows.
\end{example}

\begin{theorem}[Convergence result]
\label{T1}
  Let $E,F$ be Polish and $(X^N = (V^N, Z^N))_{N=1,2,...}$ be a sequence of Markov processes with state space $E\times F$, and with generators of the form
  \begin{align}\label{eq:11}
    G^N & = G^N_0 + N^{1/2} G^N_1 + N G^N_2,
  \end{align}
  such that $(V^N)_{N=1,2,...}$ satisfies the compact containment condition, i.e.\ for all $\varepsilon>0$ and $T>0$, there is $K\subseteq E$ compact with
  $$ \inf_N \mathbb P( V^N_t \in K, 0\leq t \leq T) > 1 - \varepsilon,$$
  and let $G:  \mathcal D(G) \subseteq \mathcal C_b(E) \cap \mathcal D(G^N) \to \mathcal C_b(E)$. %Assume the closure of $G$ creates a strongly continuous contraction semigroup on $L = \overline{M}$. 
  Assume that for every $f\in \mathcal D(G)$ and $N=1,2,...$, there is $g_N, h_N\in \mathcal D(G^N)$, such that for all $T>0$
  \begin{align}\label{eq:T1a}
    N^{-1/2}\mathbb E[\sup_{t\leq T} |g_N(V_t^N, Z_t^N)|] + N^{-1} \mathbb E[\sup_{t\leq T} |h_N(V_t^N, Z_t^N)|] &\xrightarrow{N\to\infty} 0,    \\ \label{eq:T1b}
    N^{-1/2} \mathbb E\Big[ \int_0^T \big|(G_0^N g_N + G_1^N h_N)(V_t^N, Z_t^N)\big| dt\Big] + N^{-1} \mathbb E\Big[ \int_0^T |G_0^N h_N(V_t^N, Z_t^N)|dt\Big]  & \xrightarrow{N\to\infty} 0
    \intertext{and}
    \label{eq:T1d}
    \mathbb E\Big[ \int_0^T |\varepsilon_f^N(V_t^N, Z_t^N)| dt\Big] & \xrightarrow{N\to\infty} 0
  \end{align}
  for 
  %there is $\varepsilon_f^N$ with
  %\begin{align}\label{eq:T1c}
  %    \mathbb E[|\varepsilon_f^N(V_t^N, Z_t^N)|] & \xrightarrow{N\to\infty} 0
  %\end{align}
  %and
  \begin{align}\label{eq:defeps}
    \varepsilon_f^N(v,z) = G_0^Nf(v) + G^N_1g_N(v,z)+G_2^N h_N(v,z) + N^{1/2}(G^N_1f(v)+G^N_2 g_N(v,z)) - Gf(v). 
    %-\varepsilon_f^N(V_t^N, Z_t^N)
  \end{align}
  Then, $(V^N)_{N=1,2,...}$ is tight and for every limit point $V$,   \begin{align}\label{eq:limitMP}
      \Big(f(V_t) - \int_0^t Gf(V_s)ds\Big)_{t\geq 0}
  \end{align}
  is a martingale for each $f\in\mathcal D(G)$.
\end{theorem}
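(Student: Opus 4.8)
The plan is to run the perturbed test function (corrector) argument sketched in the heuristic preceding the theorem, reducing everything to the relative compactness and convergence criterion of Theorem~2.1 in \cite{Kurtz1992}. Fix $f\in\mathcal D(G)$ together with the associated correctors $g_N,h_N\in\mathcal D(G^N)$ and set
\begin{align*}
  f_N := f + N^{-1/2}g_N + N^{-1}h_N \in \mathcal D(G^N).
\end{align*}
Since $X^N=(V^N,Z^N)$ solves the martingale problem for $G^N$ and $f_N\in\mathcal D(G^N)$, the process
\begin{align*}
  M^{N,f}_t := f_N(V^N_t,Z^N_t) - f_N(V^N_0,Z^N_0) - \int_0^t G^N f_N(V^N_s,Z^N_s)\,ds
\end{align*}
is a martingale. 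The whole proof consists in showing that $M^{N,f}$ differs from the target object $f(V^N_t)-f(V^N_0)-\int_0^t Gf(V^N_s)\,ds$ only by a remainder that is negligible in $L^1$, uniformly on compact time intervals, and then invoking the cited theorem.

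First I would expand $G^N f_N$ by linearity of $G^N=G^N_0+N^{1/2}G^N_1+NG^N_2$ and collect the result by powers of $N$. Using $G^N_2 f=0$ (as $f$ depends only on $v$), the terms of order $N^{1/2}$ combine to $N^{1/2}(G^N_1 f + G^N_2 g_N)$, those of order $N^0$ to $G^N_0 f + G^N_1 g_N + G^N_2 h_N$, and there remain the lower-order contributions $N^{-1/2}(G^N_0 g_N + G^N_1 h_N)$ and $N^{-1}G^N_0 h_N$. Comparing with the definition \eqref{eq:defeps} of $\varepsilon^N_f$ gives the exact identity
\begin{align*}
  G^N f_N = Gf + \varepsilon^N_f + N^{-1/2}(G^N_0 g_N + G^N_1 h_N) + N^{-1}G^N_0 h_N.
\end{align*}
Substituting this into $M^{N,f}$ and isolating $f(V^N_\cdot)$ yields
\begin{align*}
  f(V^N_t) - f(V^N_0) - \int_0^t Gf(V^N_s)\,ds = M^{N,f}_t - R^N_t,
\end{align*}
where $R^N_t$ collects the boundary terms $(N^{-1/2}g_N+N^{-1}h_N)(V^N_\cdot,Z^N_\cdot)$ at times $t$ and $0$ together with the time integrals of $\varepsilon^N_f$ and of the two lower-order contributions.

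The heart of the argument is to show $\mathbb E[\sup_{t\le T}|R^N_t|]\xrightarrow{N\to\infty}0$: the boundary contributions vanish by \eqref{eq:T1a}, the integral of $\varepsilon^N_f$ by \eqref{eq:T1d}, and the integrals of $N^{-1/2}(G^N_0 g_N+G^N_1 h_N)$ and $N^{-1}G^N_0 h_N$ by \eqref{eq:T1b}. Hence $f(V^N)$ is, up to this uniformly $L^1$-small error, a semimartingale whose martingale part is $M^{N,f}$ and whose drift has the integrand $Gf\in\mathcal C_b(E)$, which is bounded. Combined with the compact containment condition, this is exactly the input required by Theorem~2.1 in \cite{Kurtz1992}, which yields tightness of $(V^N)$ and, for every limit point $V$, the martingale property \eqref{eq:limitMP}. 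Concretely, for the identification one passes to the limit (along a subsequence with $V^{N_k}\Rightarrow V$, using a Skorokhod representation) in the relation $\mathbb E[(M^{N,f}_t-M^{N,f}_s)\,\Phi(V^N_{s_1},\dots,V^N_{s_m})]=0$ for bounded continuous $\Phi$ and $s_1\le\cdots\le s_m\le s<t$; the $R^N$-terms drop out by the $L^1$-bound, and the remaining terms converge by continuity and boundedness of $f$ and $Gf$ (bounded convergence for the time integral), the time points being taken in the set of continuity times of $V$.

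I expect the main obstacle to be this relative compactness step rather than the algebraic expansion. The sole purpose of the estimates \eqref{eq:T1a}--\eqref{eq:T1d} is to make the above decomposition fit the hypotheses of the cited criterion: the drift integrand $Gf$ is bounded and the lower-order pieces are $L^1$-negligible, while the corrector $R^N$ is uniformly $L^1$-small, so that the oscillations of $f(V^N)$ are controlled without any separate estimate on quadratic variations. Two technical points then deserve care: upgrading tightness of the scalar functionals $f(V^N)$ to tightness of $(V^N)$ in $D_E[0,\infty)$ requires $\mathcal D(G)$ to be rich enough (an algebra separating the points of $E$) in addition to compact containment, and the limit identification must be carried out at the co-countable set of continuity times of $V$.
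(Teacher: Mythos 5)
Your proposal is correct and takes essentially the same route as the paper's proof: both form the perturbed test function $f + N^{-1/2}g_N + N^{-1}h_N$, expand $G^N$ applied to it into $Gf + \varepsilon_f^N + N^{-1/2}(G_0^N g_N + G_1^N h_N) + N^{-1}G_0^N h_N$ using $G_2^Nf=0$, show the resulting remainder is uniformly $L^1$-small on $[0,T]$ via \eqref{eq:T1a}, \eqref{eq:T1b} and \eqref{eq:T1d}, and then invoke Theorem~2.1 of \cite{Kurtz1992} together with compact containment and boundedness of $Gf$. The additional technical points you flag (richness of $\mathcal D(G)$ for tightness in the Skorokhod space, identification at continuity times of the limit) are exactly what the cited theorem absorbs, which is also where the paper leaves them.
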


\begin{remark}[Connections to other convergence results]
\begin{enumerate}
    \item %One way to obtain Theorem~\ref{T1} would be via Theorem 4.8.2 of \cite{EthierKurtz1986}. However, the strong assumption that the limiting  generator generates a strongly continuous contraction semigroup was needed. Therefore, o
    Our proof relies on Theorem~2.1 of \cite{Kurtz1992}, as we will see in the below. However, this result is more general since it treats the case where the limiting generator ($A$ in the notation of \cite{Kurtz1992}) arises as a limit of generators which are integrated over the occupation measure of the fast variable (rather than not depending on the fast variable at all). So, it is not surprising that Theorem~\ref{T1} holds, since it treats just a special case where the limiting generator ($A$) does not depend on the fast variable. However, it is maybe surprising that Theorem~\ref{T1} is useful in obtaining limit results in a bunch of examples.
  \item Consider the case where the fast   variable jumps according to
    $$ G_2^Nf(v,z) = \int f(v,y) \pi_N(dy) - f(v,z),$$
    i.e.\ $Z^N$ jumps to new states according to the equilibrium measure $\pi_N$ at rate $N$. We note that this form of $G_2^N$ implies that $f + G_2^N f$ does not depend on $z$. Then, for smooth $f$, we set $g_N = G_1^Nf$ and $h_N := G_1^N G_1^N f$. If the limits
  \begin{align*}
      A_1 f(v) & := \lim_{N\to\infty} N^{1/2} \int G_1^Nf(v,y) \pi_N(dy), \\ A_2 f(v) & := \lim_{N\to\infty} G_0^N f(v,z) + \int G_1^N G_1^N f(v,y) \pi_N(dy)
  \end{align*}
  exist, we then see from Theorem~\ref{T1} that -- provided 
  the compact containment condition of $(V^N)_{N=1,2,...}$ and \eqref{eq:T1a}--\eqref{eq:T1d} hold -- the weak limit of $V^N$ has generator $G:=A_1 + A_2$. This case is actually treated in Corollary 2.5 of \cite{HutzenthalerPfaffelhuberPrintz2021}. Their main result, Theorem 2.3, is more general since it allows for more general $G_2^N$, and $A_2$ from above may arise by an application of stochastic averaging. 
\end{enumerate}
  
\end{remark}

\begin{proof}
  We are going to apply Theorem~2.1 of \cite{Kurtz1992}. In our notation (or their notation), we will have the special case where $Gf$ (or $Af)$ does not depend on the fast variable $z$ (or $y$) for the limiting generator $G$ (or $A$) and $f\in\mathcal D(G)$ (or $f\in\mathcal D(A)$). \\ Since $Gf$ is bounded by assumption, their (2.3) is satisfied and their (2.6) becomes our \eqref{eq:limitMP}. So, it remains to properly define some $\delta_f^N$ (or $\varepsilon_f^N$) in their (2.2), which satisfies their (2.4). Here, we have for $f\in\mathcal D(G)$ the martingale
  \begin{align*}
      f(V_t^N) & + (N^{-1/2} g_N + N^{-1} h_N)(V_t^N, Z_t^N) - \int_0^t G^N (f + N^{-1/2} g_N + N^{-1} h_N)(V_s^N, Z_s^N)ds
      \\ & = f(V_t^N) - \int_0^t Gf(V_s^N) ds + \delta_f^N(t)
  \end{align*}
  with
  \begin{align*}
      \delta_f^N(t) & = (N^{-1/2} g_N + N^{-1} h_N)(V_t^N, Z_t^N) - \int_0^t \varepsilon_f^N(V_s^N, Z_s^N) ds \\ & \qquad - \int_0^t (N^{-1/2} (G_0^N g_N + G_1^N h_N) + N^{-1} G_0^N h_N) (V_s^N, Z_s^N)ds.
  \end{align*}
  By \eqref{eq:T1a}, \eqref{eq:T1b} and \eqref{eq:T1d}, we find that $\mathbb E[\sup_{0\leq t\leq T}|\delta_f^N(t)|] \xrightarrow{N\to\infty} 0$, and we are done.
\end{proof}

\begin{remark}[Two and more than three scales]
\begin{enumerate}
    \item If we have a two-scale system, i.e.\ we have $G_1^N=0$ in \eqref{eq:11}, we can choose $g_N=0$, and have to find $h_N$ satisfying some boundedness restrictions and $\mathbb E\Big[\int_0^T|\varepsilon_f^N(V_t^N, Z_t^N)|dt\Big]  \xrightarrow{N\to\infty} 0$ for
    \begin{align}\label{eq:13}& \varepsilon_f^N(v,z):= (G_0^Nf + G_2^N h_N - Gf)(v,z).
  \end{align}
  We will use this case in functional LLNs in Section~\ref{S:examples}.
  \item For systems of more than three scales, a similar approach works equally well. Let $X^N = (V^N, Z^N)$ be a Markov process with generator
  \begin{align*}
    G^N & = G^N_0 + N \cdot G^N_1 + N^2 \cdot G^N_2 + N^3 \cdot G^N_3
    \end{align*}
  and $G_3^N f = 0$ if $f$ only depends on $v$. For a limiting generator $G$ and $f\in \mathcal D(G)$ (in particular depending only on $v$), we assume there are $g_N, h_N, k_N \in \mathcal D(G^N)$ such that \sloppy $\mathbb E\Big[\int_0^T|\varepsilon_f^N(V_t^N, Z_t^N)|dt\Big]  \xrightarrow{N\to\infty} 0$ for
\begin{align*}
  \varepsilon_f^N(v,z) & = (G_0^Nf + G_1^Ng_N + G_2^Nh_N + G_3^Nk_N  \\ & \qquad+ N(G_1^Nf + G_2^Ng_N + G_3^Nh_N) + N^2(G_2^Nf + G_3^Ng_N) - Gf)(v,z),
\end{align*}
as well as the compact containment condition of $(V^N)$ and some boundedness assumptions for $g_N$, $h_N$, $k_N$, $G_0^Ng_N$, $G_i^Nh_N, i=0,1$, $G_i^Nk_N, i=0,1,2$, convergence of $V^N$ to a solution of the $(G, \mathcal D(G))$ martingale problem can be shown as well.
\end{enumerate}
\end{remark}

\section{Applications to Chemical Reaction Networks (CRNs)}
\label{S:CRNs}
Our goal is to apply Theorem~\ref{T1} to Chemical Reaction Networks. While describing the framework in Section~\ref{ss:CRNs}, we will introduce a system which leads to Hill dynamics with coefficient~2 as a first example. After stating the main assumptions (above all, the fast subsystem is first order) in Section~\ref{ss:32}, we describe in Section~\ref{ss:33} how to derive LLNs and in Section~\ref{ss:34} the corresponding CLTs in this setting. In Section~\ref{S:ccc}, we give some general remarks how to show that the assumptions (e.g.\ the compact containment condition) from Theorem~\ref{T1} hold.

\subsection{CRNs and their rescaling}
\label{ss:CRNs}
Chemical reaction networks can be described as a pair $(\mathcal S, \mathcal R)$, consisting of the (finite) set of species $\mathcal S$ and the (finite) set of reactions $\mathcal R$, where we
write
\begin{align*}
  \sum_{S\in\mathcal S} \nu_{RS} S \xrightarrow{\text{\phantom{xx}}\widetilde\kappa_R\text{\phantom{xx}}}  \sum_{S\in\mathcal S} \nu'_{RS} S, \qquad R \in \mathcal R 
\end{align*}
for some $\nu, \nu' \in \mathbb N_0^{\mathcal R \times \mathcal
  S}$. Using this notation, the Markov dynamics
$X = (X_S)_{S\in\mathcal S}$, $X_S = (X_S(t))_{t\geq 0}$ 
%with state space $E':=\mathbb N_0^{\mathcal S}$ of this CRN (starting in $x(0) \in E'$) 
can be described through the jump rates: Using the vector of species consumed and species produced of the reactions, 
\begin{align*}
  \nu_{R\cdot} & := (\nu_{RS})_{S\in\mathcal S}, \qquad \nu_{R\cdot}' := (\nu_{RS}')_{S\in\mathcal S}, \qquad R\in\mathcal R
  \intertext{and the falling factorial}
  n_{[k]} & := n\cdot (n-1) \cdots (n-k+1), \qquad k,n \in \mathbb N_0,
\end{align*}
starting in $x(0)$, the process jumps
\begin{align*}
  \text{from }X=x \text{ to } x + (\nu_{R\cdot}' - \nu_{R\cdot}) \text{ at rate } \lambda_R(x)
\end{align*}
for some $\lambda_R$. In the sequel, we will be using mass action kinetics, i.e.\ we assume that
\begin{align*}
    \lambda_R(x) := \widetilde \kappa_R
  \prod_{S \in \mathcal S} (x_{S})_{[\nu_{RS}]}
\end{align*}
for some $\widetilde\kappa_R \in \mathbb R_+$. However, different rate functions can be used as well. 

~

%\subsection{The rescaled process}
In order to obtain limit results, we have to introduce a scaling parameter (denoted $N$ in the sequel) and rescaled time of the dynamics $X$. We are not dealing with the completely general situation (see e.g.\ \cite{KangKurtz2013}), but with a set of species which is either in {\em low} or {\em high} abundance. Here, the former are not rescaled, but the latter are rescaled by a factor of $N$. This means that ($\uplus$ meaning a disjoint union)
\begin{align*}
  \mathcal S = \mathcal S_\bullet \uplus \mathcal S_\circ,
\end{align*}
where $S\in\mathcal S_\bullet$ are in high (i.e.\ $\mathcal O(N)$)
and $S\in\mathcal S_\circ$ are in low (i.e.\ $\mathcal O(1)$) abundance. We set $V^N = (V^N_S)_{S\in \mathcal S_\bullet}$ and $Z^N = (Z^N_S)_{S\in \mathcal S_\circ}$ with
\begin{align*}
    V^N_S = N^{-1} X^N_S, \quad S\in\mathcal S_\bullet, \qquad Z^N_S = X^N_S, \quad S\in\mathcal S_\circ.
\end{align*}
For the reactions, we assume that there are $\kappa_R \in \mathbb R_+$
and $\beta_R \in \mathbb R_0, R\in\mathcal R$ such that
\begin{align*}
  \widetilde\kappa_R = \kappa_R N^{\beta_R}.
\end{align*}
Abbreviating $\nu'_{R\mathcal S_\bullet} := (\nu_{RS})_{S\in\mathcal S_\bullet}$, and equivalently for $\nu'_{R\mathcal S_\circ}, \nu_{R\mathcal S_\bullet}, \nu_{R\mathcal S_\circ},$
\begin{align*}
    \zeta_{RS} := \nu'_{RS} - \nu_{RS}, 
\end{align*}
as well as $\zeta_{R\mathcal S_\bullet}, \zeta_{R\mathcal S_\circ}$, the process $(V^N, Z^N)$ then jumps
\begin{equation}
\label{eq:jump2}
  \begin{aligned}
    \text{from }(V^N,Z^N) & = (v,z) \text{ to } (v + N^{-1} \zeta_{R\mathcal S_\bullet}, z +  \zeta_{R\mathcal S_\circ}), \\ & \text{ at rate }      \lambda_R^N(v, z) := N^{\beta_R + \sum_{S\in\mathcal S_\bullet} \nu_{RS}} \kappa_R \cdot z_{[\nu_{R\mathcal S_\circ}]}  \cdot (v_{\mathcal S_\bullet})_{[\nu_{R\mathcal S_\bullet}],N^{-1}}
  \end{aligned}
\end{equation}
where we use 
  %the usual scalar product
  %$\nu_{R\cdot } \cdot \alpha = \sum_{S\in\mathcal S} \nu_{RS}
  %\alpha_S$ and
\begin{align*}
  n_{[k],\varepsilon} = n \cdot (n-\varepsilon) \cdots (n-(k-1)\varepsilon), \qquad & n_{[k]} := n_{[k],1},
  \\
  (n_{\mathcal S_\bullet})_{[k_{\mathcal S_\bullet}], \varepsilon} := \prod_{S\in\mathcal S_\bullet} (n_S)_{[k_S], \varepsilon}, \quad n_{\mathcal S_\bullet} \in \mathbb R^{\mathcal S_\bullet}, \qquad   & (n_{\mathcal S_\circ})_{[k_{\mathcal S_\circ}]} := \prod_{S\in\mathcal S_\circ} (n_S)_{[k_S]}, \quad n_{\mathcal S_\circ} \in \mathbb R^{\mathcal S_\circ}
\end{align*}
and note that (for appropriate $\delta^N$)
\begin{align*}
    v_{[k],N^{-1}} & = v(v-N^{-1}) \cdots (v-(k-1)N^{-1}) = v^k - N^{-1} \binom k2 v^{k-1} + N^{-2} \delta^N(v,k).
\end{align*}
In other words, writing\footnote{We make the convention that $x$ denotes a column vector, and $x^\top$ a row vector.}
\begin{align} \label{eq:kappap}
    \nabla_\bullet f & := (\partial f/\partial v_S)^\top_{S\in\mathcal S_\bullet}, \qquad \kappa_R' := N^{\beta_R - 1 + \sum_{S\in\mathcal S_\bullet} \nu_{RS}} \kappa_R
\end{align}
for all $f \in \mathcal C_c^\infty(\mathbb R^{\mathcal S_\bullet \times \mathcal S_\circ})$, the process
\begin{align*}
    \Big(f(V_t^N, Z_t^N) - \int_0^t G^Nf(V_s^N, Z_s^N) ds\Big)_{t\geq 0}
\end{align*}
is a martingale, where
\begin{equation}\label{eq:CRNgen}
\begin{aligned}
    G^Nf(v,z) & = \sum_{R\in\mathcal R} N\kappa_R' z_{[\nu_{R\mathcal S_\circ}]} v_{[\nu_{R\mathcal S_\bullet}],N^{-1}} (f(v + N^{-1}\zeta_{R\mathcal S_\bullet}, z + \zeta_{R\mathcal S_\circ}) - f(v,z)) \\ & = (G_0^Nf + G_2^Nf)(v,z), \\
    G_0^N f(v,z) & = \sum_{R\in\mathcal R} \kappa_R' z_{[\nu_{R\mathcal S_\circ}]} \big(v^{\nu_{R\mathcal S_\bullet}} \nabla_\bullet f(v, z + \zeta_{R\mathcal S_\circ}) \cdot \zeta_{R\mathcal S_\bullet} \\ & \qquad \qquad \qquad \qquad - \binom{\nu_{R\mathcal S_\bullet}}{2} v^{\nu_{R\mathcal S_\bullet}-1}(f(v, z + \zeta_{R\mathcal S_\circ}) - f(v,z))\big) + \varepsilon_f^N(v,z), \\ G_2^Nf(v,z) & = \sum_{R\in\mathcal R} \kappa_R' z_{[\nu_{R\mathcal S_\circ}]} v^{\nu_{R\mathcal S_\bullet}} (f(v, z + \zeta_{R\mathcal S_\circ}) - f(v,z)), \\ |\varepsilon_f^N(v,z)| & \leq \sum_{R\in\mathcal R} N^{-1} \kappa_R' z_{[\nu_{R\mathcal S_\circ}]}  \big(\tfrac 12 v^{\nu_{R\mathcal S_\bullet}} ||\nabla^2_\bullet f||_\infty + \binom{\nu_{R\mathcal S_\bullet}}{2} v^{\nu_{R\mathcal S_\bullet}-1}||\nabla_\bullet f||_\infty + \delta^N(v,\nu_{R\mathcal S_\bullet})||f||_\infty\big). 
\end{aligned}
\end{equation}
Here $\nabla^2_\bullet f$ is the Hessian of $f$, $\binom{\nu_{R\mathcal S_\bullet}}{2} := \prod_{S\in\mathcal S_\bullet} \binom{\nu_{RS}}{2}$ and $v^{\nu_{R\mathcal S_\bullet}} := \prod_{S\in\mathcal S_\bullet} v^{\nu_{RS}}.$

\begin{example}[The simplified Hill dynamics~1]\label{ex:simHill1}
We illustrate our theory with the following example:
\begin{align}\label{eq:CRNhill2}
  \operatorname{\text{$E_1+2S_1$}} \autorightleftharpoons{\text{$N^{-1}\kappa_{1}$}}{\text{$N^\beta \kappa_{2}$}}   \operatorname{\text{$E_3$}} \qquad  \operatorname{\text{$E_3$}} \autorightarrow{\text{$N^\beta\kappa_3$}}{}    \operatorname{\text{$S_2+S_1+E_1$}}
\end{align}
with $\beta\geq 1$. This CRN arises e.g.\ if a macromolecule ($E_1$) has two binding sites for a substrate ($S_1$), which quickly binds (simultaneously at both binding sites) and forms a complex ($E_3$). If the ligands are released, one of them is turned into $S_2$. We assume that the macromolecule (i.e.\ $E_1, E_3$) is in low abundance $(O(1))$, while substrate and product, $S_1, S_2$, are in high abundance $(O(N))$, so $\mathcal S_\bullet = \{S_1, S_2\}$ and $\mathcal S_\circ = \{E_1, E_3\}$. The dynamics are similar to Michaelis-Menten kinetics below (see Section~\ref{ss:MM}), with the differnce that two substrate molecules are needed in order to form the product. A similar system was introduced by A.~V.~Hill \cite{Hill1910} for binding of oxygen to haemoglobin in order to understand data from osmotic pressure experiments. We will refine this example in Section~\ref{ss:Hill}, where the first and second reaction are broken into two separate reactions for binding of the two substrate molecules. (The intermediate step is the macromolecule bound by a single substrate, denoted $E_2$, which is missing in \eqref{eq:CRNhill2}.) For this simplified dynamics, we derive the LLN and CLT below in Examples~\ref{ex:simHill2}-- \ref{ex:simHill5}.

We let $NV_1^N,NV_2^N$ and $Z_1,Z_3$ be the processes of the particle numbers of $S_1,S_2$ and $E_1,E_3$. Also $M:=Z_1+Z_3$ is a constant as well as $V_1+V_2+N^{-1}(Z_1+3Z_3)$, so we are going to describe the Markov process $(Z^N=Z_1^N,V^N=V_1^N)$. Since $V^N$ is evolving slower than $Z^N$, we call $V^N$ the slow and $Z^N$ the fast variable. If the slow variable is assumed constant, the fast-subsystem (consisting of $E_1, E_3$) is first order; see Assumption~\ref{ass:1} below. However, the slow sub-system is second order. For the generator of $(V^N, Z^N)$ we have 
for $f \in \mathcal C_c^\infty(\mathbb R\times\mathbb R)$, writing $f'$ for the derivative according to the first variable, and (recall from \eqref{eq:kappap}) $\kappa_2' = N^{\beta-1} \kappa_2, \kappa_3' = N^{\beta-1} \kappa_3$,
\begin{align*}
    G^Nf(v,z)&=N\kappa_1v(v-N^{-1})z(f(v-2N^{-1},z-1)-f(v,z))\\
    &\qquad+N\kappa_2'(M-z)(f(v+2N^{-1},z+1)-f(v,z))\\
    &\qquad+N\kappa_3'(M-z)(f(v+N^{-1},z+1)-f(v,z))\\
    &=G_0^Nf(v,z)+NG_2^Nf(v,z),\\
    G_0^Nf(v,z)&=-2\kappa_1v^2zf'(v,z-1)+2\kappa_2'(M-z)f'(v,z+1)\\
    &\qquad +\kappa_3(M-z)f'(v,z+1)-\kappa_1vz(f(v,z-1)-f(v,z)) +\varepsilon_f^N(v,z),\\
    G_2^Nf(v,z)&=\kappa_1v^2z(f(v,z-1)-f(v,z))+\kappa_2'(M-z)(f(v,z+1)-f(v,z))\\
    &\qquad +\kappa_3'(M-z)(f(v,z+1)-f(v,z)),\\
    |\varepsilon_f^N(v,z)| &\leq  N^{-1}\big(\big(\kappa_1v^2z + \kappa_2'(M-z)+\tfrac 12\kappa_3'(M-z)\big) ||f''||_\infty + 2\kappa_1 v z ||f'||_\infty\big).
\end{align*}
\end{example}

\subsection{Main assumptions}
\label{ss:32}
Our main assumption is that the fast sub-system is first order. This means that if all $S\in\mathcal S_\bullet$ are assumed to be constant, the remaining CRN on $\mathcal S_\circ$ is first order, i.e.\ at most a single molecule is consumed and produced in each reaction; see Assumption~\ref{ass:1}.3 below. All our examples 
%in Section~\ref{S:examples}
%, except in Section \ref{ss:2E}, 
are of this kind. For $S\in\mathcal S$, we set 
$$\Gamma_S^+ := \{R\in\mathcal R: \nu'_{RS} > \nu_{RS}\}, \qquad \Gamma_S^- := \{R\in\mathcal R: \nu_{RS} > \nu'_{RS}\}$$ as the sets of reactions effectively producing and consuming species $S$, respectively. For a leaner notation, we set
$$ \widetilde \beta_R := \beta_R + \sum_{S\in\mathcal S_\bullet} \nu_{RS}.$$

\begin{assumption}\label{ass:1}
\begin{enumerate}
    \item The reaction rates $\lambda_R^N$ are of the form given in \eqref{eq:jump2}. 
    \item The fast sub-system is first order, i.e.\
    $$ \sum_{S\in\mathcal S_\circ} \nu_{RS} \leq 1, \qquad \sum_{S\in\mathcal S_\circ} \nu_{RS}' \leq 1, \qquad \qquad R\in\mathcal R.$$
    We write $$\mathcal R = \mathcal R_0 \uplus \mathcal R_1 = \mathcal R_0' \uplus \mathcal R_1',$$ where 
    \begin{align*}
          \mathcal R_0 & := \Big\{R\in\mathcal R: \sum_{S\in\mathcal S_\circ} \nu_{RS} = 0 \Big\}, \qquad 
      \mathcal R_1  := \Big\{R\in\mathcal R: \sum_{S\in\mathcal S_\circ} \nu_{RS} = 1 \Big\}, \\      \mathcal R_0' & := \Big\{R\in\mathcal R: \sum_{S\in\mathcal S_\circ} \nu_{RS}' = 0 \Big\}, \qquad  \mathcal R_1'  := \Big\{R\in\mathcal R: \sum_{S\in\mathcal S_\circ} \nu_{RS}' = 1 \Big\}.    
    \end{align*}
    For $R\in\mathcal R_1$ and $S\in\mathcal S_\circ$, we set $S_R = S$ if $\nu_{RS} = 1$, i.e.\ $R$ consumes $S_R$ as single fast species.    For $R\in\mathcal R_1'$ and $S\in\mathcal S_\circ$, we set $S_R' = S$ if $\nu_{RS}' = 1$, i.e.\ $R$ produces $S_R'$ as single fast species.
%    For $R\in\mathcal R_0$, we have $(z_{\mathcal S_\circ})_{[\nu_{R\mathcal S_\circ}]} = 1$, and for $R\in\mathcal R_1$, there is $S_R \in \mathcal S_\circ$ with $(z_{\mathcal S_\circ})_{[\nu_{R\mathcal S_\circ}]} = z_{S_R}$.
    \item For $S\in\mathcal S_\circ$, we require that 
    \begin{align}\label{eq:varphi}
        \varphi_S & := \max\Big\{\widetilde\beta_R: R\in\Gamma_S^-\Big\} \geq 1,
    \end{align}
    and for
    \begin{align}\label{hatbeta}
      \widehat\beta_{R} & := \begin{cases} \widetilde \beta_R, & R\in \mathcal R_0, \\ \widetilde\beta_R + 1 - \varphi_{S_R}, & R\in \mathcal R_1.\end{cases}
    \end{align}
    that 
    \begin{equation}\label{eq:prodcun}
    \begin{aligned}
    \psi_S & := \max\{\widehat\beta_{R}: R\in\Gamma_S^+\} = 1,
    \end{aligned}
    \end{equation}
    \item For $S\in\mathcal S_\bullet$, the {\em time-scale constraint} applies, i.e.\
    \begin{equation}\label{eq:prodcun2}
    \begin{aligned}
    \max\Big\{\widehat\beta_R: R\in \Gamma_S^+ \cup \Gamma_S^-\Big\} = 1.
  \end{aligned}
  \end{equation}
\end{enumerate}
\end{assumption}
\noindent
Note that species $S\in\mathcal S_\circ$ is consumed at rate $O(N^{\varphi_S})$ by \eqref{eq:varphi}, and is produced at rate $O(N)$ by \eqref{eq:prodcun}. Hence, the occupation measure of $S\in\mathcal S_\circ$ is $O(N^{1 - \varphi_S})$, which is the reason why we introduce $\hat\beta_R$ in \eqref{hatbeta}.
In other words, if $R\in\mathcal R_1$, it consumes species $S_R$, which is present only a fraction $O(N^{1-\varphi_{S_R}})$ of the time. This implies that the process counting how often reaction $R$ occured increases by one at rate $O(N^{\widetilde \beta_R + 1 - \varphi_{S_R}}) = O(N^{\widehat\beta_R})$.

%Moreover, any $R\in\mathcal R_0$ is either in $\Gamma_S^+$ for some $S\in\mathcal S_\circ$, or in $\Gamma_S^+ \cup \Gamma_S^-$ for some $S\in\mathcal S_\bullet$, so $$\kappa_R' = N^{\widetilde\beta_R-1}\kappa_R = N^{\widehat\beta_R-1}\kappa_R = O(1)$$ by \eqref{eq:prodcun} and \eqref{eq:prodcun2}. Any $R\in\mathcal R_1$ is either in $\Gamma_{S_R}^-$ (with $S_R \in \mathcal S_\circ$) or in $\Gamma_S^+ \cup \Gamma_S^-$ for some $S\in\mathcal S_\bullet$. In the former case, we have $\widetilde\beta_R \leq \varphi_{S_R}$, so $\widehat\beta_R \leq 1$. The same holds for the latter case due to \eqref{eq:prodcun2}. So, $$\kappa_R'\mathbb E[Z_{S_R}^N(t)] = \kappa_R N^{\widetilde\beta_R-1}\mathbb E[Z_{S_R}^N(t)] = O(N^{\widehat \beta_R - 1}) = O(1).$$

\begin{example}[The simplified Hill dynamics~2]\label{ex:simHill2}
  Here, the fast sub-system, i.e.\ the sub-system only involving $E_1, E_3$, is first order, i.e.\ Assumption~\ref{ass:1}.2 applies. In \eqref{eq:CRNhill2}, we find $\widetilde \beta_1 = 1, \widetilde\beta_2 = \widetilde\beta_3 = \beta$. Here, we have that $\mathcal R = \mathcal R_1 = \mathcal R_1'$, since all reactions consume and produce some $S\in\mathcal S_\circ = \{E_1, E_3\}$. We find $\varphi_{E_1} = 1, \varphi_{E_3} = \beta$, and therefore $\widehat \beta_2 = \widehat\beta_3 = \beta + 1 - \beta = 1$, to the effect that $\psi_{E_1} = \psi_{E_3} = 1$, i.e.\ \eqref{eq:prodcun} holds. For $S=S_1, S_2$, we find that \eqref{eq:prodcun2} holds for the same reason. 
\end{example}

\begin{remark}[Some connections to previous work]
In the setting above, there are two aspects which lead to more general CRNs than in previously published papers: 
\begin{enumerate}
  \item In previous work of Kurtz and co-authors on multi-scale CRNs, fast species need to be produced and consumed at rates of the same order of magnitude. This can be seen from both, the balance condition (3.2) of \cite{KangKurtz2013} and the prerequisites of the averaging condition (Condition 2.4) of \cite{KangKurtzPopovic2014}. 
  \\
  In \cite{KangKurtz2013}, it is required that the production rate of every species is on the same order as its consumption rate (see their (3.2)), or its abundance is at least comparable to the fastest reaction it is involved in (see their (3.3)). For the fast species from above, if $\varphi_S>1$, neither is the case. Condition~2.4 of \cite{KangKurtzPopovic2014} requires that there is a limiting operator $L_2$ (not depending on $N$) which describes the evolution of fast species. If these are consumed faster than they are produced, $L_2$ would depend on $N$, so the only possibility is that $\varphi_S=1$ (i.e.\ $\widehat\beta_R = \widetilde\beta_R$). So, the analysis from above is more general since $\varphi_S>1$ is allowed for fast species $S$.
  \item Recently, \cite{HoesslyWiufXia2021} extended the approach of intermediate species from \cite{CappellettiWiuf2016} to non-interacting species and the existence of a fast chain of reactions. As above, a  set of species is non-interacting if its reaction network is at most first order if all other species are held constant. Assuming that the set of fast species and non-interacting species are equal, another restriction is imposed. Precisely, the existence of a fast chain of reaction requires that every fast species is created only via slow species, which does not allow for conserved quantities within the fast species. As we will discuss below, this is possible in our setting; e.g.\ as discussed in Example~\ref{ex:simHill1}, $Z_1 + Z_3$ is conserved.
\end{enumerate}

\end{remark}

\subsection{The Law of Large Numbers (LLN)}
\label{ss:33}
We wish to show that $V^N \xRightarrow{N\to\infty} V$ for some Markov process $V$. The goal of this section is to compute the possible generator $G$ of $V$. Let us consider some smooth $f$ only depending on $v$. Note that $Gf$ may only depend on $v$ (and not on $z$). With the above Assumption~\ref{ass:1}, the terms $G_0^Nf$ and $G_2^Nf$ in \eqref{eq:CRNgen} can be split in sums over $R\in \mathcal R_0$ and $R\in \mathcal R_1$. The generator terms corresponding to $R\in \mathcal R_0$ do not depend on $z$, but the terms corresponding to $R\in \mathcal R_1$ do. Therefore, in order to obtain $Gf$, we are looking for $g_N$ such that $G_0^Nf + G_2^N g_N$ only depends on $v$, and quantities which are constant for the evolution. We make the ansatz
\begin{align}
   \label{eq:g1}
   g_N(v,z) = %\sum_{S\in\mathcal S_\circ} z_S a_S(v) \cdot \nabla_\bullet f(v) =
   \nabla_\bullet f(v) \cdot a_N(v) \cdot z 
\end{align}
for some $a_N = a_{\mathcal S_\bullet\mathcal S_\circ} = (a_{SS'})_{S\in\mathcal S_\bullet S'\in\mathcal S_\circ}$ with $a_{SS'} \in \mathcal C^\infty(\mathbb R_+^{\mathcal S_\bullet})$, $S\in\mathcal S_\bullet, S'\in\mathcal S_\circ$. We obtain from \eqref{eq:CRNgen}
\begin{equation}
    \label{eq:feq}
\begin{aligned} 
    (G_0^Nf + G_2^Ng_N)(v,z) & = \sum_{R\in\mathcal R_0}  \kappa_R' v_{\mathcal S_\bullet}^{\nu_{R\mathcal S_\bullet}} \nabla_\bullet f(v) \cdot (\zeta_{R\mathcal S_\bullet} + a_{\mathcal S_\bullet \mathcal S_\circ}(v) \cdot \zeta_{R\mathcal S_\circ})
    \\ & \quad + \sum_{R\in\mathcal R_1}\kappa_R' z_{S_R} v_{\mathcal S_\bullet}^{\nu_{R\mathcal S_\bullet}} \nabla_\bullet f(v) \cdot (\zeta_{R\mathcal S_\bullet} + a_{\mathcal S_\bullet \mathcal S_\circ}(v) \cdot \zeta_{R\mathcal S_\circ}) + \varepsilon_f^N(v,z)
\end{aligned}
\end{equation}
and we note that it may be possible to choose $a_N$ such that all terms proportional to all $z_{S_R}$'s in the sum over $\mathcal R_1$ on the right hand side vanish. Another option is that there conserved linear combinations of fast species, i.e.\ some set $\mathcal C$ and $\xi_{\mathcal C} = (\xi_C)_{C\in\mathcal C}$ with $\xi_C \in \mathbb Z^{\mathcal S_\circ}$ all linearly independent, such that $\xi_C\cdot \zeta_{R\mathcal S_\circ} = 0$ for all $R\in\mathcal R$. In this case, $z\cdot \xi_C$ is a constant under the evolution, and there is a chance that we pick $a_{\mathcal S_\circ\mathcal S_\bullet}$ such that the right hand side only depends on $z$ via $z\cdot\xi_{\mathcal C} := (z\cdot\xi_C)_{C\in\mathcal C}$. 

In the sequel, we will assume that $g_N(v,z)$ is chosen such that
there is some smooth $\ell_{\bullet}$, taking values in $\mathbb R^{\mathbb S_\bullet}$ with
\begin{equation}\label{eq:g2}
  \begin{aligned}
    (G_0^Nf + G_2^Ng)(v,z) & = H^Nf(v) + \varepsilon_f^N(v,z), \\
    H^Nf(v) & = \sum_{R\in\mathcal R_0} \kappa_R' v_{\mathcal S_\bullet}^{\nu_{R\mathcal S_\bullet}} \nabla_\bullet f(v) \cdot (\zeta_{R\mathcal S_\bullet} + a_{\mathcal S_\bullet \mathcal S_\circ}(v) \cdot \zeta_{R\mathcal S_\circ})
    \\ & \qquad \qquad \qquad \qquad \qquad + \nabla_\bullet f(v) \cdot \ell_\bullet(\kappa'_{\mathcal R}, z\cdot \xi_{\mathcal C}, v).
  \end{aligned}
\end{equation}
Note that $z\cdot \xi_C$ is constant, so we can use $\ell_\bullet$ with $Z^N(0) \cdot \xi_C$ as a global constant. Moreover, since $\kappa'$ still depends on $N$, the right hand side depends on $N$, but may have a proper limit as $N\to\infty$, which we will denote $Gf(v)$. Then, if $(V^N)_{N=1,2,...}$ satisfies the compact containment condition and \eqref{eq:T1a}--\eqref{eq:T1d} hold (to be discussed in Section~\ref{S:ccc}), a weak limit of  $V^N$ solves the $(G, \mathcal D(G))$ martingale problem. 

\begin{example}[The simplified Hill dynamics~3]%\label{ex:simHill3}
We will now make the generator calculations leading to the LLN for \eqref{eq:CRNhill2}. Note that $M=Z_1^N + Z_2^N$ does not change over time, i.e.\ the limiting generator may still depend on $M$. Provided the required assumptions (compact containment of $(V^N)_{N=1,2,...}$ and \eqref{eq:T1a}--\eqref{eq:T1d} hold), we will show the following:\\
Let $v\in\mathbb R_+$, 
\begin{align}\label{eq:pNvhill2}
p^N(v):= -\frac{M\kappa_1\kappa_3' v^2}{\kappa_2' + \kappa_3' + \kappa_1 v^2} \xrightarrow{N\to\infty} p(v) := \begin{cases} \displaystyle -\frac{M\kappa_1\kappa_3 v^2}{\kappa_2 + \kappa_3 + \kappa_1 v^2}, & \beta = 1, \\ \displaystyle -\frac{M\kappa_1\kappa_3 v^2}{\kappa_2 + \kappa_3}, & \beta > 1\end{cases}
\end{align}
and $V$ the solution of the ODE
\begin{align}
  \label{eq:simHillV}
  dV = p(V) dt, \qquad V_0=v.
\end{align} 
Then, letting $V^N$ as above and if $V^N_0 \xRightarrow{N\to\infty} v$, then  $V^N \xRightarrow{N\to\infty} V.$ 

~

\noindent In the case $\beta=1$, this result can be proved using the technique of stochastic averaging: Fixing the amount of slow species, we see that the fast network reduces to $\operatorname{\text{$E_1$}} \autorightleftharpoons{\text{$N\kappa_{1}v^2$}}{\text{$N(\kappa_{2} + \kappa_3)$}}   \operatorname{\text{$E_3$}}$. In its equilibrium, $E_3$ is binomially distributed with $M$ and $\frac{\kappa_1 v^2}{\kappa_1 v^2 + \kappa_2 + \kappa_3}$. From this, the LLN can be derived, e.g.\ by applying the main result from \cite{Kurtz1992}. However, it is unclear if the same technique gives a result for $\beta > 1$. Here, we follow the route from above, which will in addition pay dividends when deriving the CLT in the next subsection.

~

\noindent
For the proof, take some smooth $f$ depending only on $v$ and find $g_N$ such that  $G_0^Nf+G_2^Ng - \varepsilon_N^f$ only depends on $v$. Following \eqref{eq:g1}, we take $g_N(v,z)=za_N(v) f'(v)$. We find (abbreviating $a_N := a_N(v)$)
\begin{align*}
  (G_0^N f & + G_2^N g_N)(v,z) - \varepsilon_f^N(v,z) \\ & = M(\kappa_2' (2 + a_N) + \kappa_3' (1+a_N)) f'(z) \\ & \qquad \qquad \qquad \qquad \qquad + z\big(-\kappa_1 v^2(2 + a_N) - \kappa_2'(2 + a_N) - \kappa_3' (1+a_N)\big) f'(v).
\end{align*}
Setting
\begin{align}
    \label{eq:aNMhill}
    a_N = - \frac{2\kappa_1 v^2 + 2\kappa_2' + \kappa_3'}{\kappa_2' + \kappa_3' + \kappa_1 v^2} = -1 - \frac{\kappa_1 v^2 + \kappa_2'}{\kappa_2' + \kappa_3' + \kappa_1 v^2} = -2 + \frac{\kappa_3'}{\kappa_2' + \kappa_3' + \kappa_1 v^2},
\end{align}
this leads to
\begin{align}
    \notag %\label{eq:hillHN} 
    H^Nf(v) := (G_0^N f + G_2^N g_N)(v,z) - \varepsilon_f^N(v,z) = -M\frac{\kappa_1\kappa_3' v^2}{\kappa_2' + \kappa_3' + \kappa_1 v^2}f'(v) = p^N(v) f'(v).
\end{align}
Taking $N\to\infty$ then shows the result.
\end{example}
~
\subsection{The functional Central Limit Theorem (CLT)}\label{ss:34}
Next, let us study fluctuations, i.e.\ we are going to study the limit of $U^N := N^{1/2}(V^N - W^N)$, where $W^N$ has generator $H^N$ (recall from \eqref{eq:g2}). For this, we study the generator for the Markov process $(U^N, Z^N, W^N)$, where $(U^N, W^N)$ are slow and $Z^N$ is fast. When doing so, we have to exchange $V^N = W^N + N^{-1/2} U^N$ in \eqref{eq:CRNgen}. In particular, writing $v = V^N, u = U^N, w = W^N$,
\begin{align*}
  v_{[k],N^{-1}} & = 
  (w + N^{-1/2}u)(w + N^{-1/2}u - N^{-1}) \cdots (w + N^{-1/2}u - (k-1)N^{-1})
  \\ & = w^k + N^{-1/2}kw^{k-1}u + N^{-1} \binom k2 w^{k-2}(u^2-w) + N^{-3/2} \delta^N(u,w,k)
\end{align*}
where $\delta^N(u,w,k)$ is defined appropriately. Now, assume that $f$ only depends on $(u,z)$. Looking at \eqref{eq:CRNgen}, we can write the generator (denoting by $\nabla_\bullet$ the derivative with respect to $u$)
\begin{equation}\label{eq:genL}
  \begin{aligned}
    L^Nf(u,z,w) & = \sum_{R\in\mathcal R} N\kappa_R' z_{[\nu_{R\mathcal S_\circ}]}(w + N^{-1/2}u)_{[\nu_{R\mathcal S_\bullet}], N^{-1}} (f(u + N^{-1/2}\zeta_{R\mathcal S_\bullet}, z + \zeta_{R\mathcal S_\circ}) - f(u,z)) \\ & \qquad \qquad \qquad \qquad \qquad \qquad\qquad \qquad \qquad \qquad \qquad \qquad - N^{1/2} H^Nf(u,z,w)
    \\ & = (L_{0}^N + N^{1/2}L_{1}^N + NL_{2}^N)f(u,z,w),
    \\ L_{0}^Nf(u,z,w) & = \sum_{R\in\mathcal R} \kappa_R'z_{[\nu_{R\mathcal S_\circ}]} \big((\nu_{R\mathcal S_\bullet}w^{\nu_{R\mathcal S_\bullet}-1} u) \nabla_\bullet f(u,z + \zeta_{R\mathcal S_\circ}) \cdot \zeta_{R\mathcal S_\bullet} \\ & \qquad + \tfrac 12 w^{\nu_{R\mathcal S_\bullet}} \zeta_{R\mathcal S_\bullet}^\top \cdot \nabla_\bullet^2 f(u,z + \zeta_{R\mathcal S_\circ}) \cdot \zeta_{R\mathcal S_\bullet}\big) \\ & \qquad + \binom{\nu_{R\mathcal S_\bullet}}2 w^{\nu_{R\mathcal S_\bullet-2}}(u^2 - w) (f(u, z + \zeta_{R\mathcal S_\circ}) - f(u,z))+ \varepsilon_{f}^N(u,z,w)\\ L_{1}^Nf(u,z,w) & = \sum_{R\in\mathcal R} \kappa_R'z_{[\nu_{R\mathcal S_\circ}]} \Big( w^{\nu_{R\mathcal S_\bullet}} \nabla_\bullet f(u, z + \zeta_{R\mathcal S_\circ}) \cdot \zeta_{R\mathcal S_\bullet} \\ & \qquad \qquad \qquad \quad + (\nu_{R\mathcal S_\bullet}w^{\nu_{R\mathcal S_\bullet}-1} u)(f(u, z+\zeta_{R\mathcal S_\circ}) - f(u,z))\Big) - H^Nf(u,z,w)
    \\ L_{2}^Nf(u,z,w) & = \sum_{R\in\mathcal R} \kappa_R'z_{[\nu_{R\mathcal S_\circ}]} w^{\nu_{R\mathcal S_\bullet}}(f(u, z+\zeta_{R\mathcal S_\circ}) - f(u,z)), \\ H^Nf(u,z,w) & = \sum_{R\in\mathcal R_0} \kappa_R' w_{\mathcal S_\bullet}^{\nu_{R\mathcal S_\bullet}} \nabla_\bullet f(u,z) \cdot( \zeta_{R\mathcal S_\bullet} + a_{\mathcal S_\bullet \mathcal S_\circ}(w)\cdot \zeta_{R\mathcal S_\circ}) + \nabla_\bullet f(u,z) \cdot \ell_\bullet(\kappa_{\mathcal R}', z\cdot\xi_{\mathcal C}, w),
    \\ |\varepsilon_f^N(u,z,w)| & \leq \sum_{R\in\mathcal R} N^{-1/2} \kappa_R' z_{[\nu_{R\mathcal S_\circ}]} \big(\tfrac 12 \nu_{R\mathcal S_\bullet} w^{\nu_{R\mathcal S}-1} u || \nabla^2 f||_\infty + \binom{\nu_{R\mathcal S_\bullet}}{2} w^{\nu_{R\mathcal S_\bullet}-2}(u^2 + w) || \nabla_\bullet f||_\infty \\ & \qquad \qquad \qquad \qquad \qquad + \tfrac 16 w^{\nu_{R\mathcal S_\bullet}} \zeta_{R\mathcal S_\bullet}^\top \cdot \zeta_{R\mathcal S_\bullet}|| \nabla^3_\bullet f||_\infty + \delta^N(u,w,\nu_{R\mathcal S_\bullet}) ||f||_\infty\big).
  \end{aligned}
\end{equation}
Now, let $f$ depend only on $u$ (hence $L_2^Nf = 0$) and 
\begin{align}
    \notag % \label{eq:feqf}
    g_N(u,z) = \nabla_\bullet f(u) \cdot a_N(w) \cdot z
\end{align}
with $a_N$ as in \eqref{eq:g1} such that \eqref{eq:g2} holds. Then, looking at \eqref{eq:CRNgen} and \eqref{eq:genL}, we see by the similarity between $G_0^N$ and $L_1^N$ (noting the additional term $-H^N$ in $L_1^N$), and the similarity between $G_2^N$ and $L_2^N$ that 
$$L_1^Nf + L_2^N g_N = 0.$$ 
Then, we are looking for $h_N$ such that $L_0^Nf + L_1^Ng_N + L_2^Nh_N$ only depends on $(u,w)$, and the linear combinations $z\cdot\xi_{\mathcal C}$ which are constant in time. We make the ansatz
\begin{align}\label{eq:hNgen}
    h_N(u,z,w) & = \nabla_\bullet f(u) \cdot b_N(u,w) \cdot z + \nabla^2_\bullet f(u) \cdot (c_N(u,w) \cdot z + z^\top \cdot d_N(u,w) \cdot z) 
\end{align}
for some functions $b_N = (b_{SS'})_{S\in \mathcal S_\bullet, S'\in\mathcal S_\circ}$, $c_N = (c_{S, S', S''})_{S,S'\in\mathcal S_\bullet, S'' \in \mathcal S_\circ}, d_N = (d_{SS', S''S'''})_{S, S' \in \mathcal S_\bullet, S'', S''' \in \mathcal S_\circ}$, which depend on $N,w,u$. Then, since the fast subsystem is first order, we can write
\begin{equation}
    \label{eq:GCRN2}
    \begin{aligned}
        (& L_0^Nf + L_1^Ng_N + L_2^Nh_N)(u,z,w) \\ & = \sum_{R\in \mathcal R} \kappa_R'z_{[\nu_{R\mathcal S_\circ}]} \Big((\nu_{R\mathcal S_\bullet} w^{\nu_{R\mathcal S_\bullet}-1} u)  \nabla_\bullet f(u) \cdot (\zeta_{R\mathcal S_\bullet} + a_N(w) \cdot \zeta_{R\mathcal S_\circ} + w^{\nu_{R\mathcal S_\bullet}}\nabla_\bullet f(u) \cdot b_N(u,w) \cdot \zeta_{R\mathcal S_\circ} ) \\ & \qquad +  w^{\nu_{R\mathcal S_\bullet}} \zeta_{R\mathcal S_\bullet}^\top \cdot \nabla_\bullet^2 f(u) \cdot (\tfrac 12 \zeta_{R\mathcal S_\bullet} + a_N(w) \cdot (z + \zeta_{R\mathcal S_\circ})) \\ & \qquad + w^{\nu_{R\mathcal S_\bullet}}\Big(\nabla^2_\bullet f(u) \cdot(c_N(u,w) \cdot \zeta_{R\mathcal S_\circ} \\ & \qquad \qquad + (\zeta_{R\mathcal S}^\top \cdot d_N(u,w) \cdot (z + \zeta_{R\mathcal S_\circ}) + z^\top \cdot d_N(u,w) \cdot \zeta_{R\mathcal S}^\top )  \Big)\Big) - H^Ng_N(u,z,w) + \varepsilon_f^N(u,z,w)
    \end{aligned}
\end{equation}
Then, splitting the sum in \eqref{eq:GCRN2} into $R\in\mathcal R_0$ and $R\in\mathcal R_1$, we see that for $R\in\mathcal R_1$ there are terms proportional to $z_{S_R} \nabla_\bullet f(u)$, and there is a chance to choose $b_N$ such that these terms vanish or depend only on $(z\cdot \xi_C)_{C\in\mathcal C}$. In a second step, we choose $c_N$ and $d_N$ such that the remaining terms, all of which are proportional to $\nabla_\bullet^2 f$, vanish or depend only on $(z\cdot \xi_C)_{C\in\mathcal C}$. Assuming that $g_N,h_N$ have proper limits as $N\to\infty$, this leaves us with 
\begin{align*}
    Lf(u,w) := \lim_{N\to\infty} (L_0^N f + L_1^N g_N + L_2^Nh_N)(u,z,w) - \varepsilon_f^N(u,z,w),
\end{align*}
where $L$ is the generator of the limiting process. Provided the compact containment condition of $(U^N)_{N=1,2,...}$ and \eqref{eq:T1a}--\eqref{eq:T1d} hold, this shows convergence with Theorem~\ref{T1}.

\begin{example}[The simplified Hill dynamics 4]%\label{ex:simHill4}
Let $u\in\mathbb R$, $p^N$ and $p$ as in \eqref{eq:pNvhill2}, $V$ as in \eqref{eq:simHillV}, and $W^N$ the solution of the ODE
$$ dW^N = p^N(W^N) dt, \qquad W_0=v.$$
(So, $V=W^N$ for $\beta=1$.) Moreover, let $U$ be the solution of the SDE
\begin{equation}\label{eq:simHillU}
\begin{aligned}
  dU & = \mu(U,V) dt + \sigma(V) dB, \\ \mu(U,V) & = \begin{cases}\displaystyle -2M \frac{\kappa_1(\kappa_2+\kappa_3)\kappa_3 UV}{(\kappa_2 + \kappa_3 + \kappa_1 V^2)^2}, \qquad \qquad \qquad \qquad  & \beta = 1, \\ \displaystyle p'(V)U, & \beta > 1, \end{cases} \\ \sigma^2(V) & = \begin{cases}\displaystyle \frac{(\kappa_1^2 V^4+2\kappa_1\kappa_2 V^2+(\kappa_2+\kappa_3)^2)M\kappa_1\kappa_3 V^2}{(\kappa_2 + \kappa_3 + \kappa_1 V^2)^3}, & \beta = 1, \\ \displaystyle |p(V)|, & \beta > 1 \end{cases} \\ 
\end{aligned}
\end{equation}
for some Brownian motion $B$. Then, letting $V^N$ as in Example~\ref{ex:simHill2}, and $U^N = N^{1/2}(V^N - W^N)$. If $U^N_0 \xRightarrow{N\to\infty} u$, then
$U^N \xRightarrow{N\to\infty} U.$

~

\noindent Consider the generator of the Markov process $(U^N,Z^N,W^N)$, which is for $f\in\mathcal C_c^\infty(\mathbb R^3)$, only depending on $(u,z)$, 
\begin{align*}
    L^Nf(u,z,w)&=\kappa_1N^{-1}(Nw+N^{1/2}u)(Nw+N^{1/2}u-1)z(f(u-2N^{-1/2},z-1)-f(u,z))\\
    &\qquad +N\kappa_2'(M-z)(f(u+2N^{-1/2},z+1)-f(u,z))\\
    &\qquad+N\kappa_3'(M-z)(f(u+N^{-1/2},z+1)-f(u,z))\\
    &\qquad -N^{1/2}p^N(w)f'(u,z) \\
    &=L_0^Nf(u,z)+N^{1/2}L_1^Nf(u,z)+NL_2^Nf(u,z),\\
    L_0^Nf(u,z)&=2\kappa_1w^2zf''(u,z-1)-4 \kappa_1wu zf'(u,z-1)+\kappa_1 (u^2-w) z(f(u,z-1)-f(u,z))\\
    &\qquad +2\kappa_2'(M-z)f''(u,z+1) +\tfrac 12 \kappa_3'(M-z)f''(u,z+1) + \varepsilon_f^N(u,z,w),\\
    L_1^Nf(u,z)&=-2\kappa_1w^2zf'(u,z-1)+2\kappa_1wuz(f(u,z-1)-f(u,z))\\
    &\qquad +2\kappa_2'(M-z)f'(u,z+1)+\kappa_3'(M-z)f'(u,z+1)-p^N(w)f'(u,z),\\
    L_2^Nf(u,z)&=\kappa_1w^2z(f(u,z-1)-f(u,z))+(\kappa_2'+\kappa_3')(M-z)(f(u,z+1)-f(u,z)), \\ |\varepsilon_f^N(u,z,w)| & \leq N^{-1/2} \big( (\tfrac 23 \kappa_1 w^2 z + \tfrac 23 \kappa_2'(M-z) + \tfrac 16 \kappa_3'(M-z)) ||f'''||_\infty + 2\kappa_1 uwz ||f''||_\infty \\ & \qquad \qquad \qquad \qquad \qquad \qquad \qquad \qquad \qquad \qquad \qquad +  \kappa_1(u^2 + w)||f||_\infty\big).
\end{align*}
With $f$ only depending on $u$ and $g_N(u,z,w)=za_N(w)f'(u)$ as above, we have that $L_1^Nf+L_2^Ng_N=0$. Then we use the ansatz
$$h_N(u,z,w)=z b_N  f'(u)+\Big( z c_N +  \binom{z}{2} d_N \Big)f''(u),$$
for some functions $b_N, c_N, d_N$ (which might depend on $N, u, w$). We receive at (writing $a:=a_N, b:=b_N, c:=c_N, d:=d_N$)
\begin{align*}
    (L_0^N f & + L_1^N g_N + L_2^N h_N)(u,z,w) - \varepsilon_f^N(u,z,w)
    \\ & = \Big( M(\kappa_2' + \kappa_3')b - z(4\kappa_1 wu + 2\kappa_1 wu a + ( \kappa_1 w^2 + \kappa_2' + \kappa_3') b) \Big) f'(u) 
    \\ & \qquad + \Big(M(2\kappa_2'(1+a) + \tfrac 12 \kappa_3'(1 + 2a) + M(\kappa_2' + \kappa_3') c ) 
    \\ &  \qquad +   z(2\kappa_1 w^2 - 2\kappa_2' - \tfrac 12 \kappa_3' - 2\kappa_1 wu a - (2\kappa_2' + \kappa_3')(M-2) a - p^N(w) a \\ & \qquad \qquad \qquad \qquad \qquad \qquad \qquad \qquad - ( \kappa_1 w^2 + \kappa_2' + \kappa_3') c+  (M-1)(\kappa_2' + \kappa_3')d ) 
    \\ & \qquad + z(z-1)(-2\kappa_1 w^2 a - (2\kappa_2' + \kappa_3') a) - (\kappa_1 w^2 + \kappa_2' + \kappa_3') d \Big)f''(u). 
\end{align*}
% $$ (M-z)(z+1) = (M-2)z + M - z(z-1) $$
With $a_N$ from \eqref{eq:aNMhill}, we immediately see that only through
\begin{align*}
    b & = - \frac{2\kappa_1 wu(2+a)}{\kappa_1 w^2 + \kappa_2' + \kappa_3'} - = -\frac{2\kappa_1 \kappa_3' wu}{(\kappa_1 w^2 + \kappa_2' + \kappa_3')^2}
\end{align*}
the term proportional to $zf'(u)$ on the right hand side vanishes. Then, using that $p^N(w) = M\kappa_2'(2 + a) + M\kappa_3'(1+a)$, finding the roots of the terms proportional to $zf''(u)$ and $z(z-1)f''(u)$ on the right hand side for $c,d$, we find 
\begin{align*}
    (L_0^Nf + & L_1^Ng_N+  L_2^Nh_N)(u,z,w) - \varepsilon_f^N(u,z,w)\\
    &=-2M \frac{\kappa_1(\kappa_2'+\kappa_3')\kappa_3' uw}{(\kappa_2' + \kappa_3' + \kappa_1 w^2)^2}f'(u) + \frac 12 \frac{(\kappa_1^2w^4+2\kappa_1\kappa_2'w^2+(\kappa_2'+\kappa_3')^2)M\kappa_1\kappa_3'w^2)}{(\kappa_2' + \kappa_3' + \kappa_1 w^2)^3}f''(u).
\end{align*}
Since $W^N\xRightarrow{N\to\infty} V$, the limit of the right hand side for $N\to\infty$ gives the generator of $U$ and we are done.
\end{example}

\subsection{Checking the conditions of Theorem~\ref{T1}}\label{S:ccc}
If the calculations from the last section go through, we still have to verify the conditions of Theorem~\ref{T1}, i.e.\ the compact containment condition of $(V^N)_{N=1,2,...}$ for the LLN and $(U^N)_{N=1,2,...}$ for the CLT, as well as the corresponding conditions \eqref{eq:T1a}--\eqref{eq:T1d}. Checking these might depend on the concrete example, but here, we give a general strategy. In all steps, note that we require the fast network to be first order. This means that either~(i) fast species only appear on both sides of each $R\in\mathcal R$ of the fast network or~(ii) one fast species $S \in \mathcal S_\circ$ is produced only from slow species. (The case where one fast species produces some slow species only appears in (ii), since otherwise fast species will quickly be consumed.) In (i), there are conserved quantities in the fast network, i.e.\ (using the notation from above) there is $\xi_C$ such that $z\cdot \xi_C$ is constant. In (ii), the discussion after \eqref{eq:prodcun2} applies: the fast species $S$ is created at some rate $C_0^N N$, and is consumed at rate $C_1^N N^\beta Z^N_S$ for $\beta \geq 1$. For such processes, Lemma~\ref{l:ccc} is useful for bounding moments. The number of other fast species will be bounded by $Z_S^N$.

As for \eqref{eq:T1a}, recall from \eqref{eq:g1} and \eqref{eq:hNgen}, that both, $g_N$ and $h_N$ are polynomials in the fast species. So,  from \eqref{eq:ccc2},
$$N^{-1/2} \mathbb E[\sup_{0\leq t\leq T} (Z_S^N(t))^k] \xrightarrow{N\to\infty} 0, \qquad T>0, k=1,2,...$$ provides the required argument for \eqref{eq:T1a} to hold if $f$ has compact support and $a_N, b_N, c_N, d_N$ are continuous. For the integrals in \eqref{eq:T1b} and \eqref{eq:T1d}, we use \eqref{eq:ccc1} for 
$$\sup_{N\in\mathbb N} \mathbb E\Big[\int_0^T N^{\beta-1}(Z_S^N(t))^k dt\Big] < \infty, \qquad T>0, k=1,2,...$$ if the corresponding terms again come as a polynomial of fast species. So, we are left with showing the compact containment condition for  $(V^N)_{N=1,2,...}$ and $(U^N)_{N=1,2,...}$. Focusing on $(U^N)_{N=1,2,...}$, we start by the same generator calculations as in the last section, using $f(u) = u_S$ for some $S\in\mathcal S_\bullet$. This gives some $g_N$ and $h_N$ such that
\begin{align*}
L^N & (f + N^{-1/2} g_N + N^{-1} h_N) \\ & =  N^{1/2} (L_1^N f + L_2^N g_N) + (L_0^N f + L_1^N g_N + L_2^N h_N) + N^{-1/2} (L_0^N g_N + L_1^N h_N) + N^{-1} L_0^N h_N \\ & = 
Lf + \varepsilon^N_f + N^{-1/2} (L_0^N g_N + L_1^N h_N) + N^{-1} L_0^N h_N.  
\end{align*}
Then, provided that \eqref{eq:T1a}--\eqref{eq:T1d} also hold for $f(u) = u_S$, we have that
\begin{align*}
  U_S^N(t) & = M^N_S(t) - N^{-1/2} g_N(U_t^N, Z_t^N, W_t^N) - N^{-1}h_N(U_t^N, Z_t^N, W_t^N)  \\ & \qquad \qquad +  \int_0^t (Lf + \varepsilon^N_f + N^{-1/2}(L_0^N g_N + L_1^N h_N) + N^{-1} L_0^N h_N)(U_s^N, Z_s^N, W_s^N)ds
\end{align*}
for the martingale $M^N_S := (M^N_S(t))_{t\geq 0}$ with 
\begin{align*}
    M^N_S(t) = U_S^N(t) + (N^{-1/2} g_N + & N^{-1} h_N)(U_t^N, Z_t^N, W_t^N) \\ & - \int_0^t L^N(f + N^{-1/2} g_N + N^{-1} h_N)(U^N_s, Z_s^N, W_s^N)ds.
\end{align*}
Then, if we can bound $\sup_{0\leq t\leq T} |M_S^N(t)|$ (e.g.\ by bounding the quadratic variation of this martingale) uniformly in $N$, and if $Lf$ grows at most linearly in $U^N$, we see using \eqref{eq:T1a}--\eqref{eq:T1d} that there are bounded functions $a_S, b_S \geq 0$ such that 

\begin{align}
    \label{eq:gron}
     \mathbb{E}[\sup_{s\leq t}|U_S^N(s)|]\leq b_S(t)+\int_0^t a_S(s)\cdot \mathbb{E}[\sup_{r\leq s}|U^N(r)|]ds.    
\end{align}
With the Gronwall lemma, we obtain $\mathbb{E}[\sup_{s\leq t}|U^N(t)|]\leq b_S(t)+\int_0^t b_S(s)a_S(s)e^{\int_s^t a_S(r)dr}ds$ and Markov's inequality shows the compact containment condition for $(U_S^N)_{N=1,2,...}$.

\begin{example}[The simplified Hill dynamics~5]
  \label{ex:simHill5} While we provided the necessary generator calculations needed for convergence $V^N\xRightarrow{N\to\infty} V$ with $V$ as given in \eqref{eq:simHillV} and $U^N\xRightarrow{N\to\infty} U$ with $U$ as in \eqref{eq:simHillU}, we still need to check the corresponding conditions \eqref{eq:T1a}--\eqref{eq:T1d} and the comopact containment conditions. We note that in this example, for $T,k>0$, using Lemma~\ref{l:ccc}, 
  \begin{align}\label{eq:simHillfci}
  \mathbb E\Big[\int_0^T (M - Z^N_t)^k dt\Big] & = O(N^{1-\beta}).
  \end{align}
  In addition $0\leq Z^N\leq M$, so $\limsup_{N\to\infty} N^{-1/2} Z^N = 0$. These properties already imply \eqref{eq:T1a}--\eqref{eq:T1d} in both, the LLN and the CLT. For the compact containment condition in the LLN, note that $V^N + 2Z^N/N$ is a decreasing process, which already implies compact containment (provided $V_0^N$ converges). The hardest work is required by the compact containment condition in the CLT. For this, let $g_N(t) := g_N(Z^N_t, W^N_t) = Z^N_t a_N(W^N_t)$ and $h(t) := h_N(U^N_t, Z^N_t, W^N_t) = Z^N_t b_N(U^N_t, W^N_t)$ (i.e.\ as above with $f(u) = u$). Then, \eqref{eq:T1a}--\eqref{eq:T1d} also hold in this case (since the $(u^2+w)$-term in $\varepsilon_f^N$ vanishes in this case, leading to $\varepsilon_f^N = 0$). Then,
  \begin{align*}
    U^N_t & = M^N_t - N^{-1/2}g_N(t) - N^{-1}h_N(t) + \int_0^t 2M \frac{\kappa_1(\kappa_2'+\kappa_3')\kappa_3' U^N_s W_s}{(\kappa_2' + \kappa_3' + \kappa_1 W_s^2)^2} ds,
    \\ M^N_t&:=U^N_t+N^{-1/2}g_N(t) + N^{-1} h_N(t) - \int_0^t 2M \frac{\kappa_1(\kappa_2'+\kappa_3')\kappa_3' U^N_sW_s}{(\kappa_2 + \kappa_3' + \kappa_1 W_s^2)^2}ds,
  \end{align*}
  where, $(M^N_t)_{t\geq 0}$ is a martingale with quadratic variation
  \begin{align*}
    [M^N]_t&=N[V^N+N^{-1}g_N]_t\\
    & \approx \int_0^t (2 + a_N(W_s))^2(\kappa_1 (V^N_s)^2Z^N_s + \kappa_2'  (M-Z^N_s))  +  (1+a_N(W_s))^2 \kappa'_3 (M-Z^N_s)ds.
  \end{align*}
  Since $V^N,Z^N,W$ are bounded, $g_N$ and $h_N$ are bounded as well. Moreover, from \eqref{eq:simHillfci} we see that the quadratic variation of $M^N$ is locally bounded, so we can conclude that there are $a_S$ and $b_S$ such that \eqref{eq:gron} holds, and the compact containment condition for $(U^N)_{N=1,2,...}$ follows.    
\end{example}

\section{Examples}\label{S:examples}
In this section we give some more concrete applications of our theory. We start with Michaelis-Menten kinetics in Section~\ref{ss:MM}, i.e.\ we rederive the results from  \cite{KangKurtzPopovic2014} using the approach from Section~\ref{S:CRNs}. These kinetics is similar to the simplified Hill dynamics from Examples~\ref{ex:simHill1}--\ref{ex:simHill5}. Still, we extend results from \cite{KangKurtzPopovic2014} by allowing that the balance condition for the fast species to  fail, and from \cite{CappellettiWiuf2016} by proving the CLT. The simplified Hill example is extended in Section~\ref{ss:Hill} to account for separate effects of binding of the two ligands to the macromolecule. In Section~\ref{ss:ABCD}, we are dealing with an example using two fast species where the number of molecules of the fast species is unbounded. For some parameter combinations ($\beta, \gamma>1$), the fast species are called intermediate by \cite{CappellettiWiuf2016}, so we extend their results by the CLT. Then, in Section~\ref{ss:CWmain}, we are dealing with the main example in \cite{CappellettiWiuf2016}, but again add to their results the CLT.

%This, together with the last example in Section~\ref{ss:2E}, does neither fall into the application area of \cite{KangKurtz2013} nor of \cite{CappellettiWiuf2016}.

We note that not all calculations were carried out by hand. The approach that the right hand sides of \eqref{eq:feq} and  \eqref{eq:GCRN2} do not depend on the fast variables leads to linear systems of equations with up to 15 equations in Section~\ref{ss:CWmain}. Therefore, we produced an ancillary file with a {\tt sagemath}-commands \cite{sagemath}, which can be found on {\tt arxiv}. This file contains the computations not printed, but necessary for Examples~\ref{ex:simHill1}--\ref{ex:simHill5} and for Sections~\ref{ss:MM}, \ref{ss:Hill}, \ref{ss:ABCD} and \ref{ss:CWmain}.

\subsection{Michaelis-Menten kinetics}
\label{ss:MM}
Our first example is the famous Michaelis-Menten kinetics. It arises for the Chemical Reaction Network \begin{align}\label{M:MM}
  \operatorname{\text{$E_1$}} + \operatorname{\text{$S_1$}}
  \autorightleftharpoons{\text{$\kappa_1$}}{\text{$N^\gamma\kappa_2$}} \operatorname{\text{$E_2$}} \autorightarrow{\text{$N^\gamma \kappa_{3}$}}{} \operatorname{\text{$E_1$}} + \operatorname{\text{$S_2$}}
\end{align}
for $\gamma\geq 1$, where $S_1,S_2$ are in high abundance (order $N$) and $E_1, E_2$ are in low
abundance (order 1). The balance condition for $E_2$ only holds if $\gamma=1$, since $\gamma>1$ implies that $E_2$ is produced slower than it is consumed. Chemically, $E_2$ is a complex formed out of $E_1$ (enzyme) and $S_1$ (substrate), and $S_2$ is frequently described as product of the reaction. Denoting by $Z_1^N, Z^N_{2}$ the number of molecules $E_1$ and $E_2$, we note that $Z^N_1 + Z^N_{2} =:M$ is a constant. Further, we define $NV^N_1, NV^N_2$ to be the number of molecules of $S_1$ and $S_2$, and we find that $V^N_1 + V^N_2 + Z^N_{2}/N$ is a constant. So, since we will have that $Z_2^N = O(1)$, the dynamics of $V_2^N$ can be read off from the dynamics of $V_1^N$ (up to an $O(1/N)$-error). So, the system can be described through $V^N := V_1^N$ and $Z^N := Z_1^N$. We obtain the following.

\begin{proposition}[LLN and CLT for \eqref{M:MM}]
  Let $u\in\mathbb R$ and $v\in\mathbb R_+$, 
  \begin{align}\notag %\label{eq:pNv}
    p^N(v) :=-M \frac{N^{\gamma-1}\kappa_{3}\kappa_{1}v}{N^{\gamma-1}(\kappa_{2} + \kappa_{3}) + v\kappa_{1}} \xrightarrow{N\to\infty} 
    p(v) := \begin{cases} \displaystyle -\frac{M\kappa_{3}\kappa_{1}v}{\kappa_{2} + \kappa_{3} + v\kappa_{1}}, & \text{ for }\gamma = 1, \\[3ex] \displaystyle  -\frac{M\kappa_{3}\kappa_{1}v}{\kappa_{2} + \kappa_{3}}, & \text{ for }\gamma>1, \end{cases}    
  \end{align}
  $V$ and $W^N$ the solutions of the ODEs
  $$ dV = p(V) dt, \qquad dW^N = p^N(W^N) dt, \qquad W_0^N = V_0 =v.$$
  Moreover, let $U$ be the solution of the SDE
  \begin{align*}
     &dU = - \frac{M\kappa_1\kappa_3(\kappa_2 + \kappa_3) }{(\kappa_1 V + \kappa_2 + \kappa_3)^2} U dt + \sqrt{\frac{M\kappa_1 V \kappa_3 (\kappa_1 V + \kappa_2)^2}{2(\kappa_1 V + \kappa_2 + \kappa_3)^3} + \frac{M\kappa_3^2 \kappa_1 V(2\kappa_2 + \kappa_3)}{2(\kappa_1 V + \kappa_2+ \kappa_3)^3}} dB, &&& \text{ if }\gamma=1,
     \\&dU = p(U)dt + \sqrt{|p(V)|} dB, &&& \text{ if }\gamma >1,
  \end{align*}
  \sloppy for some Brownian motion $B$. Then, letting $V^N$ be as above, and $U^N = N^{1/2}(V^N - W^N)$. If $V^N_0 \xRightarrow{N\to\infty} v$ and $U^N_0 \xRightarrow{N\to\infty} u$, then
  $$ V^N_0 \xRightarrow{N\to\infty} V, \qquad U^N_0 \xRightarrow{N\to\infty} U. $$
\end{proposition}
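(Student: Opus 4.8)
The plan is to run the recipe of Sections~\ref{ss:33}--\ref{S:ccc} on the network \eqref{M:MM} and to conclude with Theorem~\ref{T1}; the computation is structurally identical to the simplified Hill dynamics of Examples~\ref{ex:simHill1}--\ref{ex:simHill5}, the only difference being that the substrate binding is now first order, so that the quadratic term $v^2$ and the factors of $2$ there are replaced by $v$ and $1$. Reducing to $V^N=V_1^N$ and $Z^N=Z_1^N$ (using $Z_1^N+Z_2^N=M$, and that $V_2^N$ is determined by $V_1^N$ up to an $O(N^{-1})$-error), the process is two-scale, i.e.\ $G^N=G_0^N+NG_2^N$ with, after Taylor expanding the jumps as in \eqref{eq:CRNgen},
\begin{align*}
G_2^Nf(v,z) &= \kappa_1 vz\,(f(v,z-1)-f(v,z)) + (\kappa_2'+\kappa_3')(M-z)\,(f(v,z+1)-f(v,z)),\\
G_0^Nf(v,z) &= -\kappa_1 vz\,f'(v,z-1) + \kappa_2'(M-z)\,f'(v,z+1) + \varepsilon_f^N(v,z),
\end{align*}
where $\kappa_2'=N^{\gamma-1}\kappa_2$, $\kappa_3'=N^{\gamma-1}\kappa_3$ and $|\varepsilon_f^N|=O(N^{-1})$ uniformly on compacta.

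For the \emph{LLN} I follow Section~\ref{ss:33}: for smooth $f=f(v)$ I use the corrector $g_N(v,z)=a_N(v)\,z\,f'(v)$ of \eqref{eq:g1} and choose $a_N$ so that the coefficient of $z$ in $G_0^Nf+G_2^Ng_N$ vanishes, namely $a_N=-(\kappa_1 v+\kappa_2')/(\kappa_1 v+\kappa_2'+\kappa_3')$ (the first-order analogue of \eqref{eq:aNMhill}). The surviving $z$-independent part is exactly $H^Nf(v)=p^N(v)f'(v)$, which converges to $p(v)f'(v)$ in both regimes $\gamma=1$ and $\gamma>1$. Here $g_N$ plays the role of the two-scale corrector $h_N$ of \eqref{eq:13}; since $Z^N\in\{0,\dots,M\}$ is bounded, $g_N=O(1)$ and $N^{-1}\mathbb E[\sup_{t\le T}|g_N|]\to0$, while $\mathbb E[\int_0^T|\varepsilon_f^N|\,dt]=O(N^{-1})$ once $V^N$ is confined to a compact set. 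Compact containment of $V^N$ comes for free from the conservation $X_{S_1}+X_{E_2}$, which is non-increasing (it decreases only when the product-forming reaction fires), so $0\le V^N_t\le V^N_0+Z_2^N(0)/N$; together with convergence of $V^N_0$ this gives $V^N\Rightarrow V$ by Theorem~\ref{T1}.

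For the \emph{CLT} I follow Section~\ref{ss:34}. With $W^N$ solving $dW^N=p^N(W^N)\,dt$ and $U^N=N^{1/2}(V^N-W^N)$ (so $W^N=V$ when $\gamma=1$), I substitute $v=w+N^{-1/2}u$ into \eqref{eq:CRNgen} and subtract the drift $N^{1/2}p^N(w)$, obtaining the generator $L^N=L_0^N+N^{1/2}L_1^N+NL_2^N$ of $(U^N,Z^N,W^N)$ as in \eqref{eq:genL}. Taking $f=f(u)$ and $g_N(u,z,w)=a_N(w)\,z\,f'(u)$ with the same $a_N$ makes $L_1^Nf+L_2^Ng_N=0$: the $z$-coefficient cancels as before, and the surviving constant equals $M(\kappa_2'+a_N(\kappa_2'+\kappa_3'))-p^N(w)=p^N(w)-p^N(w)=0$. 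I then insert the ansatz \eqref{eq:hNgen}, $h_N=b_N z f'(u)+(c_N z+d_N\binom{z}{2})f''(u)$, and solve the linear system forcing $L_0^Nf+L_1^Ng_N+L_2^Nh_N$ to lose its $z$-dependence. The remaining part has the form $\mu^N(u,w)f'(u)+\tfrac12(\sigma^N)^2(w)f''(u)$; letting $N\to\infty$ and using $W^N\Rightarrow V$ yields the coefficients of the SDE for $U$ in the statement, with $\mu=p'(V)U$ (which reads $p(U)$ when $\gamma>1$, as $p$ is then linear) and $\sigma^2$ as claimed. This identifies the limiting generator $L$ of $U$.

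It remains to verify the hypotheses of Theorem~\ref{T1}. Conditions \eqref{eq:T1a}--\eqref{eq:T1d} are straightforward: $g_N,h_N$ are polynomials in $z$ with coefficients bounded on compacta, $Z^N\le M$, and the occupation estimate $\mathbb E[\int_0^T(M-Z^N_t)^k\,dt]=O(N^{1-\gamma})$ from Lemma~\ref{l:ccc} (cf.\ \eqref{eq:ccc1}--\eqref{eq:ccc2}) controls the $N^{-1/2}$-prefactored error terms. The one genuinely delicate point is compact containment of $U^N$, which I treat as in Section~\ref{S:ccc}: applying the same calculation to $f(u)=u$ gives a semimartingale decomposition of $U^N$ whose martingale part has locally bounded quadratic variation (again by the occupation estimate) and whose drift $\mu^N$ grows at most linearly in $U^N$; the Gronwall inequality \eqref{eq:gron} and Markov's inequality then yield compact containment. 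I expect this step, and in particular controlling the fluctuations uniformly in the intermediate regime $\gamma>1$ --- where $E_2$ is occupied only a fraction $O(N^{1-\gamma})$ of the time yet still produces the non-degenerate diffusion coefficient $\sigma^2=|p(V)|$ --- to be the main obstacle. With compact containment in hand, Theorem~\ref{T1} gives $U^N\Rightarrow U$.
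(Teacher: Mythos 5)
Your proposal is correct and follows essentially the same route as the paper's own proof: the same reduction to $(V^N,Z^N)$, the same corrector $g_N(v,z)=a_N(v)zf'(v)$ with $a_N=-(\kappa_1v+\kappa_2')/(\kappa_1v+\kappa_2'+\kappa_3')$ for the LLN, the same cancellation $L_1^Nf+L_2^Ng_N=0$ and ansatz $h_N=b_Nzf'(u)+(c_Nz+d_N\binom{z}{2})f''(u)$ for the CLT, and the same verification of \eqref{eq:T1a}--\eqref{eq:T1d} via $0\leq Z^N\leq M$ and the occupation bound $O(N^{1-\gamma})$, with compact containment of $(U^N)$ obtained from the martingale decomposition for $f(u)=u$, quadratic-variation bounds and Gronwall as in Section~\ref{S:ccc}. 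The only content you defer (solving the linear system for $b_N,c_N,d_N$ and reading off $\mu$ and $\sigma^2$) is exactly the routine computation the paper carries out explicitly, and your identification of the drift as $p'(V)U$ in both regimes matches the stated coefficients.
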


\begin{remark}[Previous results on Michaelis-Menten kinetics]
  The above LLN has been obtained various times; see e.g.\ Section~6.4 of \cite{KangKurtz2013} for $\gamma=1$, or Example~4.5 of \cite{CappellettiWiuf2016} for $\gamma>1$. (They in fact consider two differnt scalings for $\kappa_2$ and $\kappa_3$.) In the case $\gamma=1$, the CLT has first been obtained in Section~5.2 of \cite{KangKurtzPopovic2014}. 
  %To the best of our knowledge, we are the first to derive the CLT for $\gamma>1$.
\end{remark}

%\begin{remark}[Another distinction for $\gamma>1$]
%  In the case $\gamma>1$, let us look at the possible limits of $V^N - V = (V^N - W^N) + (W^N - V)$. For the last term, we obtain
%  \begin{align*}
%      \frac{d(W^N - V)}{dt} & = -M(p^N(W^N) - p(V)) \\ & = -M\Big(p(W^N)\Big(1 - \frac{N^{1-\gamma} \kappa_1 W^N}{\kappa_2 + \kappa_3}\Big) + o(N^{1-\gamma}) - p(V)\Big)
%      \\ & = -M p(W^N - V) + M N^{1-\gamma} \frac{\kappa_3 \kappa_1^2}{(\kappa_2 + \kappa_3)^2} W^N + o(N^{1-\gamma}),
%  \end{align*}
%  which implies that $W^N-V = O(N^{1-\gamma})$. Therefore, we have the trichotomy
%  \begin{align*}
%      1<\gamma< \tfrac 32: & N^{\gamma-1}(V^N - V) \xRightarrow{N\to\infty} \text{ solution of } dU = - M\Big(p(U) - \frac{\kappa_3\kappa_1^2}{(\kappa_2 + \kappa_3)^2} V\Big),
%      \\ \gamma = \tfrac 32: & N^{1/2}(V^N - V) \xRightarrow{N\to\infty} \text{ solution of } dU = -M \Big(p(U) - \frac{\kappa_3 \kappa_1^2}{(\kappa_2 + \kappa_3)^2} V\Big)dt + \sqrt{Mp(V)} dB,
%      \\ \gamma > \tfrac 32: & N^{1/2}(V^N - V) \xRightarrow{N\to\infty} \text{ solution of } dU = -M p(U)dt + \sqrt{Mp(V)} dB.
%  \end{align*}
%\end{remark}

\begin{proof}
We set $\kappa_2' = N^{\gamma-1} \kappa_2$ and $\kappa_3' = N^{\gamma-1} \kappa_3$. For the Markov process $X^N = (V^N, Z^N)$, the generator is given for $f \in \mathcal C_c^\infty(\mathbb R\times\mathbb R$, writing $f'$ for the derivative according to the first variable,
\begin{equation}
  \notag % \label{eq:141MM}
  \begin{aligned}
    G^Nf(v,z) & = \kappa_{1} Nvz(f(v-N^{-1},z-1) - f(v,z)) + \kappa'_{2} N(M-z) (f(v+N^{-1},z+1) - f(v,z)) \\ &
    \qquad \qquad \qquad \qquad \qquad \qquad \qquad + \kappa'_{3} N(M-z) (f(v, z+1) - f(v,z)) \\ & =
    G_0^Nf(v,z) + N G_2^Nf(v,z),
    \\
    G_0^Nf(v,z) & = -\kappa_{1} vz f'(v, z-1) +
    \kappa_{2}' (M-z)  f'(v, z+1) + \varepsilon_f^N(v,z),
    \\
    G_2^Nf(v,z) & = \kappa_{1} vz (f(v, z-1) - f(v,z)) + (\kappa_{2}' + \kappa_{3}') (M-z)(f(v,z+1) - f(v,z)), \\ |\varepsilon_f^N(v,z) | & \leq \tfrac 12 N^{-1} \big(\kappa_1 vz + \kappa_2' (M-z)) ||f''||_\infty.
  \end{aligned}
\end{equation}
We mention here that $0\leq Z^N(t) \leq M$ by construction and that for all $T,a,k$ (using similar arguments as in Lemma~\ref{l:ccc})
\begin{align}\label{eq:fci}
  \mathbb E\Big[\int_0^T (M - Z^N_t)^k dt\Big] & = O(N^{1-\gamma}).
\end{align}
1. For the LLN of $V^N$ as $N\to\infty$, take $f\in\mathcal C_c^\infty(\mathbb R)$. Our task is to look for $g_N$ (depending on $v,z$) such that $G_0^N f + G_2^N g_N$ only depends on $v$. Choosing 
\begin{align} \notag
    g_N(v,z) & = za_N(v) f'(v),
\end{align}
for some (function) $a_N$, we obtain (collecting terms proportional to $z$ in the second line)
\begin{equation}
  \label{eq:142MM}
  \begin{aligned}
    G_0^Nf(v,z) & + G_2^N h_N(v,z) - \varepsilon_f^N(v,z) \\ & = \big(- \kappa_{1} vz + \kappa_{2}' (M-z) - \kappa_1 v z a_N(v) + (\kappa_2' + \kappa_3')(M-z)a_N(v)\big)f'(v)\\ & = \big( M(\kappa_{2}' + a_N(v) (\kappa_{2}' + \kappa_{3}') ) - z\big(v\kappa_{1} + \kappa_{2}' + a_N(v) (\kappa_{1}v + \kappa_{2}' + \kappa_{3}')\big)f'(v).
\end{aligned}
\end{equation}
Choosing 
\begin{align}
  \label{eq:k1} a_N(v) = - \frac{v\kappa_{1} + \kappa_{2}'}{\kappa_{2}' + \kappa_{3}' + v\kappa_{1}} = - 1 + \frac{\kappa_{3}'}{\kappa_{2}' + \kappa_{3}' + v\kappa_{1}},
\end{align}
the last line of \eqref{eq:142MM} does not depend on $z$, and we have 
\begin{align*}
  (G_0^Nf + G_2^N h_N)(v,z) & - \varepsilon_f^N(v,z) = - M\Big(\kappa_{3}' - \frac{\kappa_{3}'(\kappa_{2}' + \kappa_{3}') }{\kappa_{2}' + \kappa_{3}' + v\kappa_{1}}\Big)f'(v)
 = p^N(v)f'(v).  
\end{align*}
With $Gf(v) = p(v) f'(v)$, we see that $\varepsilon_f^N(v,z) + (p(v) - p^N(v))f'(v)$ is as in \eqref{eq:defeps}. Since $p^N \xrightarrow{N\to\infty} p$, the compact containment condition for $(V^N)_{N=1,2,...}$ holds since $V^N(t)$ is bounded by $V^N(0) + M/N$, and \eqref{eq:T1a}--\eqref{eq:T1d} hold due to $0 \leq Z^N \leq M$ and \eqref{eq:fci}, we have shown that $V^N\xRightarrow{N\to\infty} V$.

~

\noindent
2. For the CLT, we write $U^N = N^{1/2}(V^N - W^N)$, and have for the Markov process $(U^N,Z^N,W^N)$ the generator for smooth $f$, only depending on $u,z$, writing $f'$ for the derivative with respect to $u$,
\begin{equation}
  \notag % \label{eq:912}
  \begin{aligned}
    L^Nf(u,z) & = (Nw + N^{1/2} u) \kappa_{1}    z(f(u-N^{-1/2},z-1) - f(u,z)) \\ & \qquad \qquad + \kappa_{2}'N(M-z) (f(u+N^{-1/2},z+1) - f(u,z)) \\  & \qquad \qquad + \kappa_{3}'  N(M-z) (f(u, z+1) - f(u,z)) + N^{1/2} M p^N(w) 
    f'(u,z) \\ & =
    L_{0}^N f(u,z) + N^{1/2} L_{1}^N f(u,z,w) + NL_{2}^N f(u,z), \\
    L_{0}^N f(u,z) & = \tfrac 12  \kappa_1 wz f''(u,z-1) + \tfrac  12 \kappa_{2}' (M-z) f''(u,z+1) - \kappa_{1} uz f'(u,z-1) + \varepsilon_f^N(u,z), \\
    L_{1}^Nf(u,z,w) & = - \kappa_{1} wz f'(u,z-1) + \kappa_{1} uz (f(u,z-1)- f(u,z)) \\ &  \qquad \qquad \qquad + \kappa_{2}' (M-z)  f'(u,z+1) + M p^N(w)    f'(u,z),\\
    L_{2}^N f(u,z) & = \kappa_{1} wz 
    (f(u,z-1) - f(u,z)) + (\kappa_{2}' + \kappa_{3}')(M-z) (f(u,z+1) - f(u,z)), \\ |\varepsilon_f^N(u,z)| & \leq N^{-1/2} \big(\tfrac 12  \kappa_1 u z ||f''||_\infty + \tfrac 16  (\kappa_1 w z + \kappa_2'(M-z)) ||f'''||_\infty).
  \end{aligned}
\end{equation}
Then, for some smooth $f$ depending only on $u$, taking $g_N(u,z,w) = z a_N(w) f'(u)$ with $a_N$ from \eqref{eq:k1} (but depending on $w$), we have $L_1^N f + L_2^Ng_N = 0$. So, we need to look for $h_N$ such that $L_0^N f + L_1^N g_N + L_2^N h_N$ only depends on $u, w$ and has a proper limit as $N\to\infty$. Choosing
\begin{align*}
    h_N(u,z,w) = b_N(u,w) z f'(u) + \Big(c_N(w) z + d_N(w)\binom z2\Big)f''(u),
\end{align*} 
for some $b_N, c_N, d_N$, we find, collecting terms proportional to $z$ and $z^2$ after the second equality, abbreviating $a:=a_N(w), b:=b_N(u,z,w),...$
\begin{align*}
    (L_{0}^N & f + L^N_{1}g_N + L_{2}^N h_N)(u,z,w) - \varepsilon_f^N(u,z,w)
    \\ & =
    \Big(- \kappa_1 uz - \kappa_1 u z a -\kappa_1 zw b + (\kappa_2' + \kappa_3')(M-z) b  \Big) f'(u) \\ & \qquad + \Big(\tfrac 12 \kappa_1 wz + \tfrac 12 \kappa_2'(M-z) - \kappa_1 w z(z-1)a + \kappa_2'(M-z) a(z+1) + Mz p^N(w)a \\ & \qquad \qquad \qquad \qquad 
    - \kappa_1 z w (c + d(z-1)) + (\kappa_2' + \kappa_3')(M-z) (c + d z)   \Big) f''(u)
    \\ & = \Big(M (\kappa_2' + \kappa_3')b - z\Big(\kappa_1 u(1+a) + b(\kappa_1 w + \kappa_2' + \kappa_3'\Big) \Big) f'(u)
    \\ & \qquad + \Big(M\Big( \kappa_2' (\tfrac 12 + a) + (\kappa_2' + \kappa_3')c  \Big) \\ & \qquad + z\Big( \tfrac 12(\kappa_1 w - \kappa_2') + (\kappa_1 w + (M-1)\kappa_2' + Mp^N(w))a \\ & \qquad \qquad \qquad \qquad \qquad \qquad \qquad \qquad - (\kappa_1 w + \kappa_2' + \kappa_3')c + (\kappa_1 w + M(\kappa_2' + \kappa_3'))d\Big)
    \\ & \qquad \qquad \qquad \qquad \qquad \qquad \qquad \qquad  -z^2\Big((\kappa_1 w + \kappa_2') a + (\kappa_1 w + \kappa_2' + \kappa_3')d\Big)\Big) f''(u).
\end{align*}
Choosing $b, c, d$ such that the right hand side does not depend on $z$, we arrive after some rearrangements at
\begin{equation}
    \notag % \label{eq:MMfinal}
    \begin{aligned}
    ( L_0^N & f + L_1^N g_N + L_2^N h_N)(u,w)  = Lf(u,w) + \varepsilon_f^N(u,z,w) \\ Lf(u,w) & =  -\frac{M\kappa_1\kappa_3'(\kappa_2' + \kappa_3') }{(\kappa_1 w + \kappa_2' + \kappa_3')^2} u f'(u) +\Big( \frac{M\kappa_1 w \kappa_3' (\kappa_1 w + \kappa_2')^2}{2(\kappa_1 w + \kappa_2' + \kappa_3')^3} + \frac{M(\kappa_3')^2 \kappa_1 w(2\kappa_2' + \kappa_3')}{2(\kappa_1 w + \kappa_2'+ \kappa_3')^3} \Big) f''(u),
\end{aligned}
\end{equation}
where $Lf$ is the generator of $U$. Now, \eqref{eq:T1a}--\eqref{eq:T1d} follow from \eqref{eq:fci} and $0\leq Z^N_t \leq M$ and we are left with showing the compact containment condition for $(U^N)_{N=1,2,...}$. For this, let $g_N(t) := g_N(Z^N_t, W^N_t) = Z^N_t a_N(W^N_t)$ and $h(t) := h_N(U^N_t, Z^N_t, W^N_t) = Z^N_t b_N(U^N_t, W^N_t)$ (i.e.\ as above with $f(u) = u$). Then,  
\begin{align*}
U^N_t & = M^N_t - N^{-1/2}g_N(t) - N^{-1}h_N(t) - \int_0^t \tfrac{M\kappa_1\kappa'_3(\kappa'_2+\kappa'_3)}{(\kappa_1W^N_s+\kappa'_2+\kappa'_3)^2}U^N_s ds,
\\ M^N_t&:=U^N_t+N^{-1/2}g_N(t) + N^{-1} h_N(t)+\int_0^t \tfrac{M\kappa_1\kappa'_3(\kappa'_2+\kappa'_3)}{(\kappa_1 W^N_s+\kappa'_2+\kappa'_3)^2}U^N_s ds.
\end{align*}
Here, $(M^N_t)_{t\geq 0}$ is a martingale with quadratic variation
\begin{align*}
    [M^N]_t&=N[V^N+N^{-1}g_N]_t\\
    & \approx \int_0^t (1 + a_N(W_s^N))^2(\kappa_1 V^N_sZ^N_s + \kappa_2'  (M-Z^N_s))  +  a_N(W^N_s)^2 \kappa'_3 (M-Z^N_s)ds.
\end{align*}
Since $V^N,Z^N,W^N$ are bounded, $g_N$ and $h_N$ are bounded as well. Moreover, from \eqref{eq:fci} we see that the quadratic variation of $M^N$ is locally bounded, so we can conclude that there are $a_S$ and $b_S$ such that \eqref{eq:gron} holds, and the compact containment condition for $(U^N)_{N=1,2,...}$ follows. 
\end{proof}

\subsection{Hill coefficient dynamics}
\label{ss:Hill}
We now pick up Example~\ref{ex:simHill1}, but extend the CRN in order to capture all required reactions for binding of two ligands. This also generalizes the Michaelis-Menten dynamics, where the macromolecule (enzyme) can only be bound by a single ligand. Specifically, we study
\begin{align}\label{eq:CRNhill}
  \operatorname{\text{$E_1+S_1$}} \autorightleftharpoons{\text{$\kappa_{1}$}}{\text{$N^\gamma \kappa_{2}$}}   \operatorname{\text{$E_2$}} \qquad \operatorname{\text{$S_1 + E_2$}} \autorightleftharpoons{\text{$N^{\gamma-1}\kappa_{3}$}}{\text{$N\kappa_4$}} \operatorname{\text{$E_3$}} \autorightarrow{\text{$N\kappa_5$}}{}    \operatorname{\text{$S_2+E_2$}}
\end{align}
%  \begin{align*}
%    \operatorname{\text{$A+B$}} \autorightleftharpoons{\text{$\kappa_{1}$}}{\text{$N^\gamma \kappa_{2}$}}    \operatorname{\text{$AB$}} \qquad \operatorname{\text{$AB+B$}} \autorightleftharpoons{\text{$N^{\gamma-1}\kappa_{3}$}}{\text{$N\kappa_4$}}    \operatorname{\text{$A2B$}} \autorightarrow{\text{$N\kappa_5$}}{}    \operatorname{\text{$P+AB$}}
%  \end{align*}
for $\gamma>1$. Here, similar to Michaelis-Menten kinetics, $E_1$ (the macromolecule) and $S_1$ (the ligand) form a short-lived complex $E_2$, but only after forming another complex $E_3$ together with another $S_1$, the product $S_2$ is formed and $E_2$ is released. We will assume that $S_1, S_2$ are in high abundance (order $N$) and $E_1, E_2, E_3$ are in low abundance (order 1). Since $\gamma>1$, the balance condition fails for $E_1, E_2$ and $E_3$. Let $NV_1^N, NV_2^N$ and $Z_1,Z_2,Z_3$ be the processes of the particle numbers of $S_1,S_2$ and $E_1, E_2, E_3$, respectively. Since $M:=Z_1+Z_2+Z_3$ is a constant, and $V_1 + V_2 + N^{-1} (Z_1 + 2Z_2 + 3Z_3)$ is constant as well, we are going to describe the Markov process $(Z^N = (Z^N_1, Z^N_3), V^N=V_1^N)$.
  
\begin{proposition}[LLN and CLT for \eqref{eq:CRNhill}]
  Let $u\in\mathbb R$ and $v\in\mathbb R_+$, 
  $$ p^N(v) := -\frac{\kappa_1N^{\gamma-1}\kappa_3\kappa_5 v^2}{(\kappa_1 v + N^{\gamma-1}\kappa_2)(\kappa_4 + \kappa_5) + N^{\gamma-1}\kappa_1\kappa_3v^2} \xrightarrow{N\to\infty} p(v):= -\frac{\kappa_1\kappa_3\kappa_5 v^2}{\kappa_2(\kappa_4 + \kappa_5) + \kappa_1\kappa_3v^2}$$
  and $V$ and $W^N$ the solutions of the ODEs
  \begin{align*}
      dV = p(V) dt, \qquad dW^N = p^N(W^N) dt, \qquad  V_0=W^N_0=v,
  \end{align*}
  Moreover, let $U$ be the solution of the SDE
  \begin{align*}
    dU &= -2M \frac{\kappa_1\kappa_2\kappa_3(\kappa_4 + \kappa_5)\kappa_5uV}{(\kappa_2(\kappa_4 + \kappa_5) + \kappa_1\kappa_3 V^2 )^2} dt
    \\ & \qquad + \sqrt{\frac{(\kappa_2^2(\kappa_4+\kappa_5)^2 + 2\kappa_2\kappa_3\kappa_5(\kappa_4+\kappa_5)V + (\kappa_1\kappa_3V^2 + 2\kappa_2\kappa_4)\kappa_1\kappa_3V^2)M\kappa_1\kappa_3\kappa_5V^2}{(\kappa_2(\kappa_4 + \kappa_5) + \kappa_1\kappa_3V^2)^3}} dB
  \end{align*}  
  for some Brownian motion $B$. Then, let $V^N$ as above, and $U^N = N^{1/2}(V^N - W^N)$. If $V^N_0 \xRightarrow{N\to\infty} v$ and $U^N_0 \xRightarrow{N\to\infty} u$, then
  $$ V^N \xRightarrow{N\to\infty} V, \qquad U^N \xRightarrow{N\to\infty} U. $$
\end{proposition}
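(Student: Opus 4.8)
The plan is to apply the recipe of Sections~\ref{ss:33}--\ref{S:ccc} exactly as in the Michaelis--Menten proof, now with the three complexes $E_1,E_2,E_3$ forming the fast (first-order) subsystem and $S_1$ the single slow species. First I would set $\kappa_2'=N^{\gamma-1}\kappa_2$ and $\kappa_3'=N^{\gamma-1}\kappa_3$, use the conserved total $M=Z_1+Z_2+Z_3$ to eliminate $Z_2=M-Z_1-Z_3$, and describe the dynamics through $(V^N,Z^N)$ with $Z^N=(Z_1^N,Z_3^N)$, reading off $G^N=G_0^N+NG_2^N$ from \eqref{eq:CRNgen}. A crucial simplification is that $0\le Z_1^N,Z_3^N\le M$, so the fast variables are bounded; this is what makes the probabilistic parts of Theorem~\ref{T1} tractable.

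For the LLN I would take $f\in\mathcal C_c^\infty(\mathbb R)$ and use the ansatz \eqref{eq:g1}, i.e.\ $g_N(v,z)=(a_1(v)z_1+a_3(v)z_3)f'(v)$, and solve the $2\times 2$ linear system forcing the coefficients of $z_1 f'(v)$ and $z_3 f'(v)$ in $G_0^Nf+G_2^Ng_N$ to vanish. This yields $H^Nf(v)=p^N(v)f'(v)$ as in \eqref{eq:g2}; letting $N\to\infty$ (with $\gamma>1$) gives $p^N\to p$ and the limiting generator $Gf=pf'$, so that $V^N\xRightarrow{N\to\infty}V$.

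For the CLT I would set $U^N=N^{1/2}(V^N-W^N)$ and write the generator of $(U^N,Z^N,W^N)$ as $L^N=L_0^N+N^{1/2}L_1^N+NL_2^N$ following \eqref{eq:genL}. Reusing the same $a_N$ (evaluated at $w$) gives $L_1^Nf+L_2^Ng_N=0$, after which I would use the ansatz \eqref{eq:hNgen}: $h_N$ linear in $z$ multiplying $f'(u)$ plus a term quadratic in $z$ multiplying $f''(u)$, with coefficient functions $b_N,c_N,d_N$ chosen so that $L_0^Nf+L_1^Ng_N+L_2^Nh_N$ is free of $z$ up to conserved combinations, exactly as in \eqref{eq:GCRN2}. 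Taking $N\to\infty$ and using $W^N\xRightarrow{N\to\infty}V$ then identifies the drift $\mu(U,V)$ and the diffusion coefficient $\sigma^2(V)$ of the stated SDE.

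Finally I would verify the hypotheses of Theorem~\ref{T1} along the lines of Section~\ref{S:ccc}. Compact containment of $V^N$ is immediate since the conserved quantity traps $V_1^N$ between $0$ and a constant up to an $O(1/N)$ bounded-fast-species correction, and \eqref{eq:T1a}--\eqref{eq:T1d} follow from $0\le Z_i^N\le M$ together with an occupation-measure bound of the form $\mathbb E[\int_0^T(\text{polynomial in the short-lived complexes})^k\,dt]=O(N^{1-\gamma})$ from Lemma~\ref{l:ccc}, as in \eqref{eq:simHillfci}. The main obstacle is twofold: algebraically, the linear systems for $a_N$ and for $(b_N,c_N,d_N)$ are larger than for Michaelis--Menten because of the extra complex $E_2$ and the second reversible reaction, so they are best solved symbolically (via \texttt{sagemath}, as the authors indicate); probabilistically, the one delicate point is compact containment of $U^N$, where taking $f(u)=u$ one writes $U^N$ as a martingale $M^N$ plus correction terms, controls $\sup_{t\le T}|M^N_t|$ through its locally bounded quadratic variation (again using boundedness of the fast species and the $O(N^{1-\gamma})$ occupation estimate), and closes the Gronwall argument \eqref{eq:gron}.
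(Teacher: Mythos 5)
Your proposal is correct and takes essentially the same approach as the paper's proof: the same reduction to $(V^N,Z^N)$ with $Z^N=(Z_1^N,Z_3^N)$ via the conserved quantities, the same linear-in-$z$ ansatz for $g_N$ in the LLN and the linear-plus-quadratic ansatz for $h_N$ in the CLT (solved symbolically), and the same verification of the hypotheses of Theorem~\ref{T1} through boundedness of the fast species, the $O(N^{1-\gamma})$ occupation bounds, and the martingale/Gronwall argument for compact containment of $(U^N)_{N=1,2,\ldots}$ as in Example~\ref{ex:simHill5}. The only differences are cosmetic, e.g.\ the paper labels the rescaled rates $\kappa_1',\kappa_2'$ where you write $\kappa_2',\kappa_3'$.
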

  
\begin{remark}
\begin{enumerate}
  \item Instead of the rates $N\kappa_4, N\kappa_5$, we could use a scaling $N^\beta \kappa_4, N^\beta\kappa_5$ for $\beta\geq 1$. This would change the result, but not the proof, except for taking the limits in the last equalities in \eqref{eq:hill_limit_LLN} for the LLN and in \eqref{eq:hill_lim} for the CLT.
  \item The same holds for the case $\gamma=1$: In fact, $\gamma>1$ only plays a role in taking the last limits in \eqref{eq:hill_limit_LLN} and \eqref{eq:hill_lim}.
  \item In the CLT, if $\gamma>3/2$, we find
  \begin{align*}
      N^{1/2}(p^N(v) - p(v))& \xrightarrow{N\to\infty} 0,  
  \end{align*}
  and -- by an application of Gronwalls lemma -- $N^{1/2}(\tilde V^N - V) \xRightarrow{N\to\infty} 0$. Therefore, in this case, we find in addition that $N^{1/2}(V^N - V) \xRightarrow{N\to\infty} U$.
\end{enumerate}
\end{remark}  
  
\begin{proof}
We set  
\begin{align*} 
  \kappa'_1:=N^{\gamma-1}\kappa_2, \qquad \kappa'_2=N^{\gamma-1}\kappa_3.
\end{align*}
The generator of $(V^N,Z^N)$ is for smooth $f$, writing $f'$ for the derivative according to the first variable,
\begin{align*}
  G^Nf(v,z)&=N\kappa_1vz_1(f(v-N^{-1},z-e_1)-f(v,z))\\ &\qquad+N \kappa'_1 (M-z_1-z_3)(f(v+N^{-1},z+e_1)-f(v,z))\\
  &\qquad +N\kappa'_2v (M-z_1-z_3) (f(v-N^{-1},z+e_3)-f(v,z))\\
  &\qquad +N\kappa_4 z_3 (f(v+N^{-1},z-e_3)-f(v,z))+N\kappa_5 z_3 (f(v,z-e_3)-f(v,z))\\ 
  %&=-\kappa_1 z_1 v \partial f(v,z-e_1)+\kappa'_1(M-z_1-z_3)\partial f(v,z+e_1)\\ &\qquad -\kappa'_2(M-z_1-z_3)v\partialf(v,z+e_3)+\kappa_4 z_3\partial f(v,z-e_3)\\  &\qquad +N\kappa_1 z_1 v (f(v,z-e_1)-f(v,z))+N \kappa'_1(M-z_1-z_3)(f(v,z+e_1)-f(v,z))\\&\qquad +N\kappa'_2 (M-z_1-z_3)v(f(v,z+e_3)-f(v,z))+N\kappa_4z_3(f(v,z-e_3)-f(v,z))\\  &\qquad +N\kappa_5 z_3(f(v,z-e_3)-f(v,z))\\
  &=G_0^Nf(v,z)+NG_1^Nf(v,z) + \varepsilon^N + (M-z_1-z_3) O(N^{\gamma-2}) + z_3O(N^{\beta-2})+o(1),\\
  G_0^Nf(v,z)&=-\kappa_1 v z_1  f'(v,z-e_1)+\kappa'_1(M-z_1-z_3) f'(v,z+e_1)\\
  &\qquad -\kappa'_2 v(M-z_1-z_3) f'(v,z+e_3)+\kappa_4 z_3 f'(v,z-e_3),\\
  G_1^Nf(v,z)&=\kappa_1 v z_1  (f(v,z-e_1)-f(v,z))+ \kappa'_1(M-z_1-z_3)(f(v,z+e_1)-f(v,z))\\
  &\qquad +\kappa'_2 v (M-z_1-z_3)(f(v,z+e_3)-f(v,z))+\kappa_4 z_3(f(v,z-e_3)-f(v,z))\\
  &\qquad +\kappa_5 z_3(f(v,z-e_3)-f(v,z)),\\
  \varepsilon_1^N&=N^{-1} \big(\tfrac 1 2\kappa_1 v z_1   f''(v,z-e_1)+\tfrac 1 2\kappa'_1(M-z_1-z_3)f''(v,z+e_1)\\
  &\qquad +\tfrac 1 2 \kappa'_2 v(M-z_1-z_3) f''(v,z+e_3)+\tfrac 1 2\kappa_4 z_3 f''(v,z-e_3)\big).
\end{align*}

~

\noindent
1. For the LLN, take some smooth $f$ depending only on $v$. We need to find $g$ such that the limit of $G_0^N f + G_1^N g $  exists and only depends on $v$. Choosing $$g(v,z)=(z_1a_1+z_3a_3)f'(v)$$ 
for some functions $a_1, a_3$, we find
\begin{equation}
\label{eq:moi}
  \begin{aligned}
    (G_0^Nf + G_1^Ng)(v,z)&=\Big(M \Big(\kappa'_1-\kappa'_2v+\kappa'_1a_1+\kappa'_2 va_3\Big)\\  &\qquad +z_1 \Big(-\kappa_1v - \kappa'_1+\kappa'_2 v-(\kappa_1 v + \kappa_2')a_1 - \kappa'_2 va_3 \Big)\\ & \qquad + z_3\Big(-\kappa'_1+\kappa'_2 v+\kappa_4 - \kappa'_1 a_1 - (\kappa'_2 v + \kappa_4 + \kappa_5) a_3\Big)\Big)f'(v).
  \end{aligned}
\end{equation}
With
\begin{align*}
  %sage output: a1=-(k0*k2*v^2 + k1*(k3 + k4) + (k0*(k3 + k4) - k2*k4)*v)/(k0*k2*v^2 + k0*(k3 + k4)*v + k1*(k3 + k4))
  a_1 & :=-\frac{(\kappa_1\kappa'_2v^2+\kappa'_1(\kappa_4+\kappa_5)+(\kappa_1(\kappa_4+\kappa_5)-\kappa'_2\kappa_5)v}{\kappa_1\kappa'_2v^2+\kappa_1(\kappa_4+\kappa_5)v+\kappa'_1(\kappa_4+\kappa_5)},\\ 
  %sage output: a3=(k0*k2*v^2 + k0*k3*v + k1*k3)/(k0*k2*v^2 + k0*(k3 + k4)*v + k1*(k3 + k4)
  a_3 & :=\frac{\kappa_1\kappa'_1v^2+\kappa_1\kappa_4v+\kappa'_1\kappa_4}{\kappa_1\kappa'_2v^2+\kappa_1(\kappa_4+\kappa_5)v+\kappa'_1(\kappa_4+\kappa_5)},
\end{align*}
the last two lines in \eqref{eq:moi} vanish and we obtain the generator of the limit
%-M*k0*k2*k4*v^2/(k0*k2*v^2 + k0*k3*v + k0*k4*v + k1*k3 + k1*k4)
\begin{equation}\label{eq:hill_limit_LLN}
\begin{aligned}
    Gf(v) & = \lim_{N\to\infty} (G_0^N f+ G_1^N g)(v) = M \lim_{N\to\infty} (\kappa'_1-\kappa'_2v+\kappa'_1a_1+\kappa'_2 va_3)f'(v) \\ & = -M \lim_{N\to\infty}
    \frac{\kappa_1 \kappa_3' \kappa_5 v^2}{\kappa_2'(\kappa_4 + \kappa_5) + \kappa_1(\kappa_4 + \kappa_5) v + \kappa_1\kappa_3'v^2}f'(v) \\ & = -M \frac{\kappa_1 \kappa_3 \kappa_5 v^2}{\kappa_2(\kappa_4 + \kappa_5) + \kappa_1\kappa_3v^2}f'(v).
    % sage output: -M*k0*k2*k4*v^2/(k0*k2*v^2 + k1*k3 + k1*k4 + (k0*k3 + k0*k4)*v)
\end{aligned}
\end{equation}
~

\noindent
2. For the fluctuations, consider $U = N^{1/2}(V^N-\tilde V^N)$ and the generator of the Markov process $(U,Z)$ is
\begin{align*}
  L^Nf(u,z)&=\kappa_1z_1(Nv+N^{1/2}u)(f(u-N^{-1/2},z-e_1)-f(u,z))\\ & \qquad+N \kappa'_1 (M-z_1-z_3)(f(u+N^{-1/2},z+e_1)-f(u,z))\\  &\qquad +\kappa'_2 (Nv+N^{1/2}u) (M-z_1-z_3) (f(u-N^{-1/2},z+e_3)-f(u,z))\\ &\qquad +N\kappa_4 z_3 (f(u+N^{-1/2},z-e_3)-f(u,z)) + N\kappa_5 z_3 (f(u,z-e_3)-f(u,z))\\  &\qquad -N^{1/2}p^N(v) f'(u,z)\\      
  %&=-\kappa_1 z_1 u \partial f(v,u-e_1)+\tfrac 12 \kappa_1z_1 v\partial^2 f(u,z-e_1)\\ &\qquad +\tfrac 12 \kappa'_1(M-z_1-z_3)\partial^2 f(u,z+e_1)-\kappa'_2(M-z_1-z_3)u\partial f(u,z+e_3)\\  &\qquad +\tfrac 12 \kappa'_2(M-z_1-z_3)v\partial^2 f(u,z+e_3) +\tfrac 12 \kappa_4 z_3\partial^2 f(u,z-e_3)\\ &\qquad -N^{1/2}\kappa_1z_1v\partial f(u,z-e_1)+N^{1/2}\kappa_1z_1u(f(u,z-e_1)-f(u,z)\\   &\qquad +N^{1/2}\kappa'_1(M-z_1-z_3)\partial f(u,z+e_1)-N^{1/2}\kappa'_2(M-z_1-z_3)v\partial f(u,z+e_3)\\  &\qquad +N^{1/2}\kappa'_2(M-z_1-z_3)u(f(u,z+e_3)-f(u,z_3))+N^{1/2}\kappa_4z_3\partial f(u,z-e_3)\\      &\qquad -N^{1/2}\nu(v)\partial f(u,z)\\  &\qquad +N\kappa_1 z_1 v (f(u,z-e_1)-f(u,z))+N \kappa'_1(M-z_1-z_3)(f(u,z+e_1)-f(u,z))\\      &\qquad +N\kappa'_2 (M-z_1-z_3)v(f(u,z+e_3)-f(u,z))+N\kappa_4z_3(f(u,z-e_3)-f(u,z))\\      &\qquad +N\kappa_5 z_3(f(u,z-e_3)-f(u,z))\\     
  &=L_0^Nf(u,z)+N^{1/2}L_1^N f(u,z)+NL_2^Nf(u,z) + o(1),\\      L_0^Nf(u,z)&=-\kappa_1 u z_1   f'(u,z-e_1)+\tfrac 12 \kappa_1 v z_1 f''(u,z-e_1)\\
  &\qquad +\tfrac 12 \kappa'_1(M-z_1-z_3) f''(u,z+e_1)-\kappa'_2u(M-z_1-z_3) f'(u,z+e_3)\\
  &\qquad +\tfrac 12 \kappa'_2 v(M-z_1-z_3)  f''(u,z+e_3) +\tfrac 12 \kappa_4 z_3 f''(u,z-e_3)\\
  L_1^Nf(u,z)&=-\kappa_1vz_1  f'(u,z-e_1)+\kappa_1 uz_1(f(u,z-e_1)-f(u,z)) \\  &\qquad +\kappa'_1(M-z_1-z_3) f'(u,z+e_1)-\kappa'_2v(M-z_1-z_3) f'(u,z+e_3)\\
  &\qquad +\kappa'_2u(M-z_1-z_3)(f(u,z+e_3)-f(u,z_3))+\kappa_4z_3  f'(u,z-e_3)-p^N(v) f'(u,z)\\
  L_2^Nf(u,z)&=\kappa_1 vz_1  (f(u,z-e_1)-f(u,z))+ \kappa'_1(M-z_1-z_3)(f(u,z+e_1)-f(u,z))\\  &\qquad +\kappa'_2v (M-z_1-z_3)(f(u,z+e_3)-f(u,z))+\kappa_4z_3(f(u,z-e_3)-f(u,z))\\   &\qquad +\kappa_5 z_3(f(u,z-e_3)-f(u,z)).
  \end{align*}
With $f$ only depending on $u$ and $g(u,z) = (z_1 a_1 + z_3a_3)f'(u)$ as above but depending on $u$ instead of $v$, we have that $L_1^N f+L_2^N g=0$. Then, we make the ansatz
$$h(u,z)=\Big(z_1b_1+z_3b_3\Big)f'(u) + \Big(z_1c_1+z_3c_2+\binom{z_1}{2}d_1+\binom{z_3}{2}d_2+\binom{z_1+z_3}{2}d_3\Big)f''(u)$$
for some $b_1, b_3, c_1,\dots,d_3$ and obtain
\begin{align*}
  (L_0^N & f+L_1^Ng+L_2^Nh)(u,z)\\
  &=\Big(M\big(\kappa_3'u(a_3-1)+\kappa_2'b_1+\kappa_3'vb_3\big)\\  &\qquad+z_1\big(-\kappa_1u+\kappa_3'u-\kappa_1ua_1-\kappa_3'ua_3-\kappa_1vb_1-\kappa_2'b_1-\kappa_3'vb_3\big)\\ &\qquad + z_3\big(\kappa_3'u-\kappa_3'a_3u-\kappa_2'b_1-\kappa_3'vb_3-\kappa_4b_3-\kappa_5b_3\big)\Big)f'(u)\\
  &+\Big(M\big(\tfrac 12(\kappa_2'+\kappa_3'v)+\kappa_2'a_1-\kappa_3'va_3+\kappa_2'c_1+\kappa_3'vc_2\big) \\ &\qquad+z_1\big(\tfrac 12(\kappa_1v-\kappa_2'-\kappa_3'v)+\kappa_1va_1+\kappa_2Ma_1-\kappa_2a_1-\kappa_3Mva_1-\nu a_1-\kappa_1vc_1\\  &\quad\qquad +\kappa_1vd_1+\kappa_1vd_3-\kappa_2'c_1+\kappa_2'Md_1+\kappa_2'Md_3-\kappa_3'vc_2+\kappa_3'Mvd_3+\kappa_3'vb\big)\\ &\qquad+z_3\big(\frac 12(-\kappa_2'-\kappa_3'+\kappa_4)+\kappa_2'Mb-\kappa_2'a-\kappa_3'Mvb+\kappa_3'vb\\   &\quad\qquad-\kappa_4b-\nu b-\kappa_2'c_1+\kappa_2'Md_3-\kappa_3'vc_2+\kappa_3'Mvd_2+\kappa_3'Mvd_3\\ &\quad\qquad -\kappa_4c_2+\kappa_4d_2+\kappa_4d_3-\kappa_5c_2+\kappa_5d_2+\kappa_5d_3\big)\\   &\qquad+z_1^2\big(-\kappa_1va_1-\kappa_2'a_1+\kappa_3'va_1-\kappa_1vd_1-\kappa_1vd_3-\kappa_2'd_1-\kappa_2'd_3-\kappa_3'vd_3\big)\\ &\qquad+z_3^2\big(-\kappa_2'a_3+\kappa_3'va_3+\kappa_4b-\kappa_2'd_3-\kappa_3'vd_2-\kappa_3'vd_3-(\kappa_4+\kappa_5)(d_2+d_3)\big)\\ &\qquad+z_1z_3\big(-\kappa_1va_3-(\kappa_2'-\kappa_3'v)(a_1+a_3)+\kappa_4a_1-\kappa_1vd_3-\kappa_2'(d_1+2d_3)\\  &\quad\qquad -\kappa_3 v(d_2+2d_3)-\kappa_4d_3-\kappa_5d_3\big)\Big)f''(u).
\end{align*}
We can choose $b_1, b_3, c_1,...,c_5$ such that the right hand side only depends on $v$. Plugging these into the last display gives, in the limit $N\to\infty$
\begin{equation}\label{eq:hill_lim}
 \begin{aligned}
  Lf(u)&=\lim_{N\to\infty} (L_0^Nf+L_1^Ng+L_2^Nh)(u,z))\\ & = \lim_{N\to\infty} M\big(\kappa_3'u(a_3-1)+\kappa_2'b_1+\kappa_3'vb_3\big)\cdot f'(u)\\ &\qquad\qquad+M\big(\tfrac 12(\kappa_2'+\kappa_3'v)+\kappa_2'a_1-\kappa_3'va_3+\kappa_2'c_1+\kappa_3'vc_2\big)\cdot f''(u) \\ &  = -\frac{2M \kappa_1 \kappa_2 \kappa_3(\kappa_4 + \kappa_5)\kappa_5 u v}{(\kappa_1 \kappa_3 v^2 + \kappa_2(\kappa_4 + \kappa_5)^2} f'(u) \\ & \quad 
  %+ \frac 12 \frac{((\kappa_1\kappa_3v^2 + \kappa_2\kappa_4)^2 + (\kappa_4+\kappa_5)(\kappa_2+2\kappa_3v)\kappa_2\kappa_5 + \kappa_2^2\kappa_4\kappa_5 )M\kappa_1\kappa_3\kappa_5v^2}{(\kappa_1\kappa_3v^2 + \kappa_2(\kappa_4 + \kappa_5))^3} 
  + \frac 12 \frac{(\kappa_2^2(\kappa_4+\kappa_5)^2 + 2\kappa_2\kappa_3\kappa_5(\kappa_4+\kappa_5)v + (\kappa_1\kappa_3v^2 + 2\kappa_2\kappa_4)\kappa_1\kappa_3v^2)M\kappa_1\kappa_3\kappa_5v^2}{(\kappa_1\kappa_3v^2 + \kappa_2(\kappa_4 + \kappa_5))^3} f''(u). 
  \end{aligned}
\end{equation}
Conditions~\eqref{eq:T1a}--\eqref{eq:T1d} as well the compact containment condition for $(U^N)_{N=1,2,...}$ can be shown as in Example~\ref{ex:simHill5}. 
%mu in CLT:  -(k0*v + 2*k1)*M*k0*k2*(k3 + k4)*k4*u*v/(k0*k2*v^2 + k0*k3*v + k0*k4*v + k1*k3 + k1*k4)^2
%...as N->infty:  -2*M*k0*k1*k2*(k3 + k4)*k4*u*v/(k0*k2*v^2 + k1*k3 + k1*k4)^2
%sigma2 in CLT:  1/2*(k0^2*k2^2*v^4 + 2*k0^2*k2*k3*v^3 + 2*k0*k1*k2*k3*v^2 + k0^2*k3^2*v^2 + 2*k0^2*k3*k4*v^2 + k0^2*k4^2*v^2 + 2*k0*k1*k3^2*v + 4*k0*k1*k3*k4*v + 2*k1*k2*k3*k4*v + 2*k0*k1*k4^2*v + 2*k1*k2*k4^2*v + k1^2*k3^2 + 2*k1^2*k3*k4 + k1^2*k4^2)*M*k0*k2*k4*v^2/(k0*k2*v^2 + k0*k3*v + k0*k4*v + k1*k3 + k1*k4)^3
%...as N->infty:  1/2*(k0^2*k2^2*v^4 + 2*k0*k1*k2*k3*v^2 + 2*k1*k2*k3*k4*v + 2*k1*k2*k4^2*v + k1^2*k3^2 + 2*k1^2*k3*k4 + k1^2*k4^2)*M*k0*k2*k4*v^2/(k0*k2*v^2 + k1*k3 + k1*k4)^3
%...as N->infty: (k0*k2*v^2+k1*k3)^2 + 2*k1*k4(k3+k4)(k1+k2v)*M*k0*k2*k4*v^2

\end{proof}

\subsection{Extending the first example from Section 6.5 of~[11]} 
\label{ss:ABCD}
Here, we extend the first example in Section 6.5 of \cite{KangKurtz2013}. Precisely, we study
\begin{align}
\label{M:ABCD}
  \operatorname{\text{$S_1$}} \autorightleftharpoons{\text{$\kappa_{1}$} }{\text{$N^\gamma \kappa_2$}}
  \operatorname{\text{$E_1$}} \autorightleftharpoons{\text{$N^\gamma\kappa_{3}$}}{ \text{$N^\beta\kappa_4$}}
  \operatorname{\text{$E_2$}}\autorightarrow{ \text{$N^\beta\kappa_{5}$}}{} \operatorname{\text{$S_2$}}.
\end{align}
for $\beta, \gamma \geq 1$.  Here, $S_i$ is in high abundance (order $N$), and $E_i$ is in low abundance (order 1), $i=1,2$, so we set $Z_i$ as the copy number of $E_i$ and $NV_i$ as the copy numbers of species $S_i$, $i=1,2$. Since $V_1^N + V_2^N + (Z_1^N + Z_2^N)N^{-1}$ is constant, it suffices to study $(V^N=V_1^N, Z^N = (Z_1^N, Z_2^N))$. 

Looking at Assumption~\ref{ass:1}, we note that $\widetilde\beta_0 = 1, \widetilde \beta_1 = \widetilde \beta_2 = \gamma, \widetilde \beta_3 = \widetilde \beta_4 = \beta$, as well as $\varphi_{E_1} = \gamma, \varphi_{E_2} = \beta$, leading to $\widehat\beta_1 = \widehat\beta_2 = \widehat\beta_2 + 1 - \varphi_{E_1} = 1, \widehat\beta_3 = \widehat\beta_4 = \widehat\beta_4 + 1 - \varphi_{E_2} = 1$ and $\psi_{E_1} = \psi_{E_2} = 1$ and \eqref{eq:prodcun2} is satisfied.

\begin{proposition}[LLN and CLT for \eqref{M:ABCD}]
  Let $u\in\mathbb R$ and $v\in\mathbb R_+$, $V$ the solution of the ODE
  $$ dV = - \frac{\kappa_1 \kappa_3 \kappa_5 v}{\kappa_2(\kappa_4 + \kappa_5) + \kappa_3 \kappa_5} dt, \qquad V_0=v,$$
  and $U$ the solution of the SDE
  $$ dU = - \frac{\kappa_1 \kappa_3 \kappa_5 u}{\kappa_2(\kappa_4 + \kappa_5) + \kappa_3 \kappa_5} dt + \sqrt{\frac{\kappa_1 \kappa_3 \kappa_5 v}{\kappa_2(\kappa_4 + \kappa_5) + \kappa_3 \kappa_5}} dB, \qquad U_0=u,$$
  for some Brownian motion $B$. Then, letting $V^N$ be as above, setting $U^N = N^{1/2}(V^N - V)$, and if $V^N_0\xRightarrow{N\to\infty} v$ and $U^N_0\xRightarrow{N\to\infty} u$, we have
  $$ V^N\xRightarrow{N\to\infty} V, \qquad U^N\xRightarrow{N\to\infty} U.$$
\end{proposition}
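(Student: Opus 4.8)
The plan is to run the two-scale recipe of Sections~\ref{ss:33} and~\ref{ss:34}, which applies verbatim here because \eqref{M:ABCD} produces only integer powers of $N$, i.e.\ $G^N = G_0^N + N G_2^N$ with $G_1^N = 0$. First I would write down the generator of $(V^N, Z^N)$, $Z^N=(Z_1^N,Z_2^N)$ being the copy numbers of $E_1,E_2$. With $\kappa_2' = N^{\gamma-1}\kappa_2$, $\kappa_3' = N^{\gamma-1}\kappa_3$, $\kappa_4' = N^{\beta-1}\kappa_4$ and $\kappa_5' = N^{\beta-1}\kappa_5$, the five reactions give jumps in $(v,z_1,z_2)$ and, as in \eqref{eq:CRNgen}, split into $G_0^N$ (from the $S_1$-reactions) and $NG_2^N$ (the fast part on $E_1,E_2$). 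The essential structural novelty compared with Michaelis--Menten and the Hill dynamics is that there is \emph{no} conservation law among the fast species: $E_1$ is produced from the slow species $S_1$ at rate $O(N)$ and $E_2$ is produced out of $E_1$, so both $Z_1^N$ and $Z_2^N$ are unbounded. This is case~(ii) of Section~\ref{S:ccc}, and it is where the real work lies.

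For the LLN I take $f$ depending only on $v$ and use the linear ansatz $g(v,z) = (z_1 a_1(v) + z_2 a_2(v))f'(v)$ from \eqref{eq:g1}. Computing $G_0^Nf + G_2^Ng$ and collecting the coefficients of $z_1$ and $z_2$ yields a $2\times 2$ linear system for $a_1,a_2$; solving it removes the $z$-dependence and leaves $G_0^Nf + G_2^Ng - \varepsilon_f^N = p^N(v)f'(v)$ with
\begin{align*}
  p^N(v) = -\frac{\kappa_1\kappa_3'\kappa_5'\,v}{\kappa_2'(\kappa_4'+\kappa_5')+\kappa_3'\kappa_5'} = -\frac{\kappa_1\kappa_3\kappa_5\,v}{\kappa_2(\kappa_4+\kappa_5)+\kappa_3\kappa_5}.
\end{align*}
A convenient feature of this example is that the powers of $N$ cancel, so $p^N=p$ is already $N$-independent (and so are $a_1,a_2$); hence $W^N=V$ and the fluctuation process is simply $U^N=N^{1/2}(V^N-V)$, with no $N^{1/2}(p^N-p)$ correction to control.

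For the CLT I substitute $v=w+N^{-1/2}u$ as in \eqref{eq:genL} to obtain the three-scale generator $L^N = L_0^N + N^{1/2}L_1^N + NL_2^N$ of $(U^N,Z^N,W^N)$. Taking $g_N(u,z,w)=(z_1a_1(w)+z_2a_2(w))f'(u)$ with $a_1,a_2$ as in the LLN makes $L_1^Nf+L_2^Ng_N=0$, by the same similarity between $G_0^N,G_2^N$ and $L_1^N,L_2^N$ noted in Section~\ref{ss:34}. I then use the ansatz \eqref{eq:hNgen}, here $h_N = (z_1b_1+z_2b_2)f'(u) + \bigl(z_1c_1+z_2c_2+\binom{z_1}{2}d_1+\binom{z_2}{2}d_2+\binom{z_1+z_2}{2}d_3\bigr)f''(u)$, and choose the seven coefficients so that $L_0^Nf+L_1^Ng_N+L_2^Nh_N$ loses its dependence on $z_1,z_2,z_1^2,z_2^2,z_1z_2$. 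This is again a linear system (which I would solve with \texttt{sagemath}, as for the other examples), and letting $N\to\infty$ leaves $Lf(u)=p'(V)\,u\,f'(u)+\tfrac12|p(V)|\,f''(u)$, the generator of the SDE in the statement, with $p'(V)=-\kappa_1\kappa_3\kappa_5/(\kappa_2(\kappa_4+\kappa_5)+\kappa_3\kappa_5)$ and $|p(V)|=\kappa_1\kappa_3\kappa_5 V/(\kappa_2(\kappa_4+\kappa_5)+\kappa_3\kappa_5)$.

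The main obstacle is the verification of the hypotheses of Theorem~\ref{T1}, precisely because the fast species are unbounded. In place of the bound $0\le Z^N\le M$ available for Michaelis--Menten and Hill, I would control the moments and occupation-time integrals of $Z_1^N,Z_2^N$ by Lemma~\ref{l:ccc}: $E_1$ is produced at rate $O(N)$ and consumed at rate $O(N^\gamma)Z_1^N$, while $E_2$ is produced only from $E_1$ and consumed at rate $O(N^\beta)Z_2^N$, so $\varphi_{E_1}=\gamma\ge1$ and $\varphi_{E_2}=\beta\ge1$ force the occupation measures to vanish at order $N^{1-\gamma}$ and $N^{1-\beta}$ respectively, $E_2$ being dominated by $E_1$. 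Since $g_N,h_N$ and the error terms are polynomials in the fast species, these estimates give \eqref{eq:T1a}--\eqref{eq:T1d}. Compact containment for $(V^N)_{N=1,2,\dots}$ follows from the conservation law $V_1^N+V_2^N+(Z_1^N+Z_2^N)N^{-1}=\mathrm{const}$, which bounds $V^N=V_1^N$ from above; and for $(U^N)_{N=1,2,\dots}$ from the martingale-plus-Gronwall argument \eqref{eq:gron}, exactly as in Example~\ref{ex:simHill5}. The only extra care needed is that the quadratic variation of the relevant martingale now involves the unbounded $Z_i^N$, so it must be bounded through the moment estimates from Lemma~\ref{l:ccc} rather than by a constant.
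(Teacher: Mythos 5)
Your proposal is correct and follows essentially the same route as the paper's proof: the same two-scale LLN ansatz $g=(a_1z_1+a_2z_2)f'(v)$ with the same $2\times 2$ system, the same CLT ansatz from \eqref{eq:hNgen} (your extra $f'(u)$-coefficients $b_1,b_2$ simply come out to zero, since no $z_if'(u)$ terms arise in this example), the same limiting generators, and the same Lemma~\ref{l:ccc}-type occupation bounds together with the Gronwall argument of Example~\ref{ex:simHill5} for compact containment of $(U^N)_{N=1,2,\dots}$. Your observation that the powers of $N$ cancel, so that $p^N=p$ and the $a_i$ are $N$-independent, is precisely what the paper exploits by setting $U^N=N^{1/2}(V^N-V)$ with no intermediate $W^N$.
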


\begin{proof}
We define
\begin{align}\label{eq:auv}
 \kappa'_1:=N^{\gamma-1}\kappa_2, \qquad \kappa'_2:=N^{\gamma-1}\kappa_3, \qquad 
 \kappa'_3:=N^{\beta-1} \kappa_4, \qquad
 \kappa'_4:=N^{\beta-1} \kappa_5.
\end{align}
Then, $(V^N, Z^N)$ has the generator for $f \in \mathcal C_c^\infty(\mathbb R \times \mathbb R^2)$, where $f'$ is the derivative with respect to the first variable,
\begin{equation}
  \notag %\label{eq:531}
  \begin{aligned}
    G^Nf(v,z) & = Nv \kappa_1 (f(v - N^{-1}, z + e_1) - f(v,z)) \\ & \qquad + N \kappa'_{1} z_1 (f(v + N^{-1}, z-e_1) - f(v,z)) + N z_1 \kappa'_{2} (f(v,z + e_2 - e_1) - f(v,z)) \\ & \qquad + N\kappa'_3 z_2 (f(v,z+e_1 - e_2) - f(v,z)) + N\kappa'_4 z_2(f(v, z-e_2) - f(v,z)) \\ & =  G_0^Nf(v,z) + N G_2^Nf(v,z),\\ G_0^Nf(v,z) & = - \kappa_1 v f'(v,z+e_1) + z_1 \kappa'_1 f'(v,z-e_1) + \varepsilon^N_f(v,z), \\ G_2^Nf(v,z) & = \kappa_1v(f(v,z+e_1)-f(v,z))+\kappa'_{1} z_1(f(v,z-e_1) - f(v,z))\\
    &\qquad +\kappa'_2 z_1 (f(v,z+e_2 - e_1) - f(v,z)) + \kappa'_3 z_2 (f(v,z+e_1 - e_2) - f(v,z))\\
    &\qquad + \kappa'_4 z_2(f(v, z-e_2) - f(v,z)),\\
    |\varepsilon_f^N(v,z)|& \leq N^{-1}\tfrac 12 (\kappa_1 v + \kappa'_1 z_1)||f''||_\infty.
  \end{aligned}
\end{equation}
For bounding the two fast variables, a calculation similar to the proof of Lemma~\ref{l:ccc} gives, for all $T,a,k>0$
\begin{equation}
    \label{eq:eq9}
    \begin{aligned}
      & \mathbb E\Big[ \int_0^T (Z_1^N(t))^k dt\Big] = O(N^{1-\gamma}), \qquad     \mathbb E\Big[ \int_0^T (Z_2^N(t))^k dt\Big] = O(N^{1-\beta}), 
      \\ & N^{-a} \mathbb E[\sup_{0\leq t\leq T} (Z_1^N(t))^k] \xrightarrow{N\to \infty}0, \qquad N^{-a} \mathbb E[\sup_{0\leq t\leq T} (Z_2^N(t))^k] \xrightarrow{N\to \infty}0.
    \end{aligned}
\end{equation}
1. Again, $G_1^N=0$, and we take $g_N=0$. The compact containment condition for $(V^N)_{N=1,2,...}$ is straight-forward, since $V^N + (Z_1^N + Z_2^N)/N)$ is non-increasing and \eqref{eq:eq9} bounds $(Z_1^N)_{N=1,2,...}$ and $(Z_2^N)_{N=1,2,...}$. Using the same arguments, conditions \eqref{eq:T1a}--\eqref{eq:T1d} will hold, such that we can concentrate on generator calculations:\\
Take $f \in \mathcal C_c^\infty(\mathbb R)$ only depending on $v$. We are looking for $h_N$ such that $G_0^Nf(v,z) + G_2^Nh_N(v,z)$ only depends on $N, v$ and has a limit for $N\to\infty$. Choosing (see \eqref{eq:g1})
$$g(v,z) = (a_1 z_1 + a_2 z_2) f'(v)$$ for some $a_1,a_2$ (which might depend on $N$), we find
\begin{align*}
    G_0^N & f(v,z)+ G_1^N h_N(v,z) =-\kappa_1vf'(v)+z_1\kappa'_1f'(v)+\kappa_1va_1f'(v)-\kappa'_1z_1a_1f'(v)\\
    &\qquad\qquad\qquad +\kappa'_2z_1(a_2-a_1)f'(v)+\kappa'_3z_2(a_1-a_2)f'(v)-\kappa'_4z_2a_2f'(v) + \varepsilon_f^N(v,z)\\
    &=\big( \kappa_1v(a_1-1)+z_1(\kappa'_1 - (\kappa_2' + \kappa_3')a_1 + \kappa_3' a_2) +z_2(\kappa'_3 a_1 - (\kappa_4' + \kappa'_4)a_2\big)f'(v)  + \varepsilon_f^N(v,z).
\end{align*}
Choosing $a_1, a_2$ such that the terms proportioal to $z_1$ and $z_2$ vanish, i.e.\,
\begin{align*}
a_1 & =\frac{\kappa'_1(\kappa'_3+\kappa'_4)}{\kappa'_1(\kappa'_3+\kappa'_4) + \kappa'_2\kappa'_4} =\frac{\kappa_2(\kappa_4+\kappa_5)}{\kappa_2(\kappa_4+\kappa_5) + \kappa_3\kappa_5}, \\ a_2&=\frac{\kappa'_1\kappa'_3}{\kappa'_1(\kappa'_3+\kappa'_4)  +\kappa'_2\kappa'_4}=\frac{\kappa_2\kappa_4}{\kappa_2(\kappa_4+\kappa_5)  +\kappa_3\kappa_5},
\end{align*}
where we have used \eqref{eq:auv}, we obtain
\begin{align*}
  (G_0^N f + G_1^N h_N)(v,z) & = \kappa_1 v (a_1-1)f'(v) + \varepsilon_f^N(v,z) = Gf(v) + \varepsilon_f^N(v,z),
  \\ \text{ with }Gf(v) & = -\frac{\kappa_1\kappa_3\kappa_5v}{\kappa_2(\kappa_4+\kappa_5)+\kappa_3\kappa_5}f'(v).
\end{align*}
Since $G$ is the generator of $V$, we are done.

~

\noindent
2. For the functional CLT, we start with generator calculations. Writing $U = N^{1/2}(V^N - V)$ and have for the generator of $(U^N, Z^N,V)$ (again, $Z^N = (Z^N_1, Z^N_2)$), using $v$ from the LLN, for $f \in \mathcal C_c^\infty(\mathbb R \times \mathbb R^2)$, only depending on $u,z$, with $a_1$ from above,
\begin{equation}
  \notag %\label{eq:531a}
  \begin{aligned}
    L^Nf(u,z,v) & = (Nv + N^{1/2} u) \kappa_1 (f(u - N^{-1/2}, z + e_1) - f(u,z)) \\ & \qquad + N \kappa'_{1} z_1 (f(u + N^{-1/2}, z-e_1) - f(u,z)) + N \kappa'_{2} (f(u,z + e_2 - e_1) - f(u,z))  \\ & \qquad + N\kappa'_3 z_2 (f(u,z+e_1 - e_2) - f(u,z)) + N\kappa'_4 z_2(f(u, z-e_2) - f(u,z)) \\ & \qquad \qquad \qquad\qquad \qquad \qquad  \qquad \qquad \qquad - N^{1/2} \kappa_1 v(a_1-1) f'(u,z) \\ & =  L_0^Nf(u,z,v) + N^{1/2} L_1^Nf(u,z,v) + N L_2^Nf(u,z,v), \\
    L_0^Nf(u,z,v) & = \tfrac 12 \kappa_1 v f''(u,z+e_1)-\kappa_1 u f'(u,z+e_1)+\tfrac 12 \kappa'_1 z_1f''(u,z-e_1) + \varepsilon_f^N(u,z,v),\\ 
    L_1^Nf(u,z,v) & = -\kappa_1 v f'(u,z+e_1)+\kappa_1 u (f(u,z+e_1)-f(u,z))+\kappa'_1z_1f'(u,z-e_1) \\ & \qquad \qquad \qquad\qquad \qquad\qquad \qquad \qquad  \qquad \qquad \qquad - \kappa_1 v(a_1-1)f'(u,z),\\
    L_2^Nf(u,z) & = \kappa_1v(f(u,z+e_1)-f(u,z))+\kappa'_1z_1(f(u,z-e_1)-f(u,z))\\
    &\qquad +\kappa'_2z_1(f(u,z-e_1+e_2)-f(u,z))+\kappa'_3z_2(f(u,z+e_1-e_2)-f(u,z))\\
    &\qquad +\kappa'_4z_2(f(u,z-e_2)-f(u,z)),
    \\ |\varepsilon_f^N(u,z,v)| & \leq \tfrac 16 N^{-1/2} (\kappa_1 v + \kappa_2' z_1)||f'''||_\infty.
  \end{aligned}
\end{equation}
With $g(u,z) = (a_1 z_1 + a_2 z_2)f'(u)$ as above, $L_1^N f + L_2^N g_N = 0$. 
Next, let us look for $h_N$ such that $L_0^Nf + L_1^Ng_N + L_2^Nh_N$ doesn't depend on $z$ and has a limit for $N\to\infty$. With (recall from \eqref{eq:hNgen}, where we set $c=0$)
$$h_N(u,z)=\Big(z_1b_1+z_2b_2+\binom{z_1}{2}d_1+\binom{z_2}{2}d_2+\binom{z_1+z_2}{2}d_3\Big)f''(u)$$
for some $b_1,b_2,d_1,d_2,d_3$, we have
\begin{align*}
&L_0^Nf(u,z,v)+L_1^Ng_N(u,z,v)+L_2^Nh_N(u,z,v) - \varepsilon_f^N(u,z,v)\\
&\qquad =\tfrac 12 \kappa_1 v f''(u)-\kappa_1 u f'(u)+\tfrac 12 \kappa'_1z_1f''(u)-\kappa_1v(a_1(z_1+1)+a_2z_2)f''(u)\\
&\qquad\qquad +\kappa_1ua_1f'(u)+\kappa'_1z_1(a_1(z_1-1)+a_2z_2)f''(u) - \kappa_1 v(a_1-1) (a_1 z_1+a_2z_2)f''(u)\\
&\qquad\qquad +\kappa_1 v(b_1+z_1d_1+(z_1+z_2)d_3)f''(u)\\
&\qquad\qquad -\kappa'_1z_1(b_1+(z_1-1)d_1+(z_1+z_2-1)d_3)f''(u)\\
&\qquad\qquad +\kappa'_2z_1(-b_1+b_2-(z_1-1)d_1+z_2d_2)f''(u)\\
&\qquad\qquad +\kappa'_3z_2(b_1-b_2+z_1d_1-(z_2-1)d_2)f''(u)\\
&\qquad\qquad -\kappa'_4z_2(b_2+(z_2-1)d_2+(z_1+z_2-1)d_3)f''(u)\\
&\qquad = \kappa_1 u (a_1-1) f'(u) + \Big(\tfrac 12 \kappa_1 v(1 - 2a_1 + 2b_1) \\
&\qquad\qquad +z_1\big(\tfrac 12 \kappa_2' - v\kappa_1 a_1^2 -(\kappa_2' + \kappa_3') b_1 + \kappa_3' b_2 + v\kappa_1 d_1 +  v\kappa_1 d_3\big)\\
&\qquad\qquad +z_2\big(-v\kappa_1 a_1 a_2 + \kappa_4' b_1 - (\kappa_4' + \kappa_5') b_2 + v\kappa_1 d_3\big)\\
&\qquad\qquad +z_1(z_1-1)\big(\kappa_2' a_1 - (\kappa_2' + \kappa_3')d_1 - \kappa_2' d_3\big)\\
&\qquad\qquad +z_2(z_2-1)\big(-(\kappa'_3 + \kappa_5')d_2-\kappa'_4 d_3\big)\\
&\qquad\qquad \Big.+z_1z_2\big(\kappa'_1a_2+\kappa'_3d_1+\kappa_3'd_2-(\kappa'_1 + \kappa_5')d_3\big)\Big)f''(u).
\end{align*}
Now, we need to find $b_1,b_2,d_1,d_2,d_3$ such that the last five lines vanish. 
%\begin{align*}          
%(\kappa_2' + \kappa_3') b_1 - \kappa_3' b_2 - v\kappa_1 (d_1 + d_3) & = \tfrac 12 \kappa_2' - v\kappa_1 a_1^2, \\ \kappa_4' b_1 - (\kappa_4' + \kappa_5') b_2 + v\kappa_1 d_3 & = v\kappa_1 a_1 a_2,\\ (\kappa_2' + \kappa_3')d_1 + \kappa_2' d_3 & = \kappa_2' a_1, \\ (\kappa_4' + \kappa_5')d_2 + \kappa_5' d_3 & = 0, \\- \kappa_4' d_1 -\kappa_3' d_2 + (\kappa_2' + \kappa_5') d_3 & = \kappa_2' a_2.\end{align*}
This leads to 
$$ b_1 = \tfrac 12 \cdot \frac{\kappa_2' (\kappa_4' + \kappa_5')}{\kappa_2'(\kappa_4' + \kappa_5') + \kappa_3' \kappa_5'}= \tfrac 12 a_1.$$
For the limiting generator, we thus have, using \eqref{eq:auv},
\begin{align*}
(L_0^N f + L_1^N g_N + L_2^N h_N)(u,z,v) & = \kappa_1u(a_1 - 1) f'(u) + \tfrac 12 \kappa_1v(1 - 2a_1 + 2b_1)f''(u) + \varepsilon_N^f(u,z,v) \\ &  = Lf(u,v)
+ \varepsilon_N^f(u,z,v), \\ Lf(u,v) & = - \frac{\kappa_1 \kappa_3 \kappa_5 u}{\kappa_2(\kappa_4 + \kappa_5) + \kappa_3\kappa_5} f'(u) + \tfrac 12 \frac{\kappa_1 \kappa_3 \kappa_5 v}{\kappa_2(\kappa_4 + \kappa_5)+ \kappa_3\kappa_5} f''(u).
\end{align*}
So, $L$ is the generator of $U$. We also see from \eqref{eq:eq9} that \eqref{eq:T1a}--\eqref{eq:T1d} are satisfied. Finally, the compact containment condition for $(U^N)_{N=1,2,...}$ follows as in Example~\ref{ex:simHill5}.
\end{proof}

\subsection{Main example from Cappelletti und Wiuf (2016)}
\label{ss:CWmain}
\begin{figure}
    \centering
    \begin{center}
    \begin{tikzpicture}[baseline={(current bounding box.center)}, auto]
      %states
     \node[state] (E0+S0)  at (.5,0)    {$E_1+S_1$};
    \node[state] (E1) at (4,1.5)  {$E_2$};
    \node[state] (E2) at (4,-2.5)   {$E_3$};
    \node[state] (E0+S1) at (7,1.5)  {$E_1+S_2$};
      \node[state] (E0+S2) at (7,-2.5)   {$E_1+S_3$};
    %edges
    \path[->]
    (E0+S0)  edge[bend left] node {$\kappa_1$} (E1)
    edge            node[swap] {$\kappa_2$} (E2)
      (E1) edge[bend left] node[swap] {$\;\;N\kappa_3$}   (E0+S0)
      edge            node {$N^3\kappa_4$} (E0+S1)
    edge[bend left] node {$N^3\kappa_5$} (E2)
    (E2) edge            node {$N^2 \kappa_6$} (E0+S2)
    edge[bend left] node {$N^2\kappa_7$} (E1)
    (E0+S2) edge[bend left=50] node {$\kappa_8$} (E0+S0);
    \end{tikzpicture}
    \end{center}
    \caption{\label{fig:CW1}Main example from \cite{CappellettiWiuf2016}.}
\end{figure}
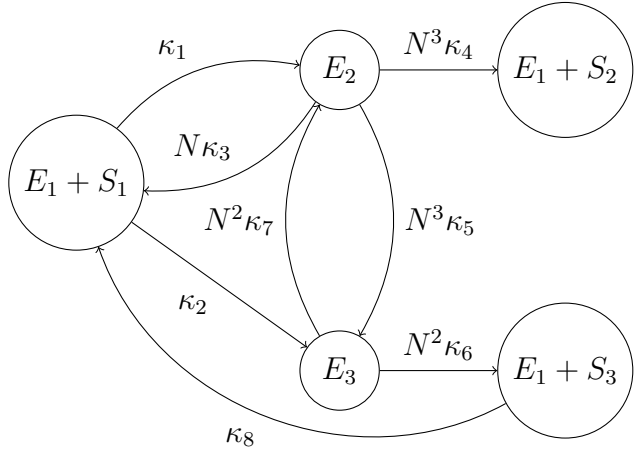

\noindent
We will now study the main example from \cite{CappellettiWiuf2016}, as given in Figure~\ref{fig:CW1}. 
Here, $E_1, E_2$ and $E_3$ are enzymes which help to transform $S_1$ into $S_2$ and $S_3$. For the abundances, we have that $E_1, E_2, E_3$ are in low abundance (order 1), and $S_1, S_2, S_3$ are in high abundance (order $N$). We denote the number of $E_1, E_2, E_3$ by $Z^N_1, Z^N_2, Z^N_3$, respectively, and the number of $S_1, S_2, S_3$ by $NV^N_1, NV^N_2, NV^N_3$, respectively. A close inspection of the reactions shows that $M := Z^N_1+Z^N_2+Z^N_3$ is a constant, and it suffices to study $Z^N = (Z_1^N, Z_2^N)$. Moreover, the reaction rates are such that $Z_1 = M$ at most times. If some molecule $E_2$ or $E_3$ is created, it reacts through either reaction $4$ or $6$, both of which occur at faster rates than the creation reactions of $E_2$ and $E_3$. For this reason, \cite{CappellettiWiuf2016} call these species \emph{intermediate}. Moreover, we also note that $V^N_{1} + V^N_{2} + V_3 + N^{-1}(Z^N_2 + Z^N_3)$ is a constant, such that it suffices to study $V^N = (V_1^N, V_3^N)$. Moreover it is easy to check that $\psi_{E_1} = \psi_{E_2} = \psi_{E_3} = 1$, so our general theory applies. 

\begin{proposition}[LLN and CLT for the system from Figure~\ref{fig:CW1}]
  Let
  \begin{equation}\label{eq:810}
    \begin{aligned}
      \lambda_1 & := \kappa_1\frac{\kappa_4(\kappa_6 + \kappa_7)}{\kappa_4(\kappa_6 + \kappa_7) + \kappa_5 \kappa_6} + \kappa_2\frac{\kappa_4  \kappa_7}{\kappa_4(\kappa_6 + \kappa_7) + \kappa_5 \kappa_6}, \\
      \lambda_2 & := \kappa_1 \frac{\kappa_5\kappa_6}{\kappa_4(\kappa_6 +  \kappa_7) + \kappa_5\kappa_6} + \kappa_2 \frac{(\kappa_4 + \kappa_5)\kappa_6}{\kappa_4(\kappa_6 +  \kappa_7) + \kappa_5\kappa_6},
    \end{aligned}
  \end{equation}
  $u=(u_1,u_3)\in\mathbb R^2$ and $v = (v_1, v_3) \in \mathbb R_+^2$, $V = (V_1, V_3)$ the solution of the ODE
  \begin{align*}
    dV_1 & = (-M(\lambda_1+\lambda_2) V_1 + M\kappa_8 V_3)dt,\\
    dV_3 & = (M\lambda_2 V_1 - M\kappa_8 V_3)dt,
  \end{align*}
  and $U = (U_1, U_3)$ solution of the SDE
  \begin{align*}
    dU_1 & = (-M(\lambda_1+\lambda_2) U_1 + M\kappa_8 U_3)dt + \sqrt{M\lambda_1 V_1} dB_1 + \sqrt{M\lambda_2 V_1} dB_2 + \sqrt{M\kappa_8 V_3} dB_3, \\ dU_3 & = (M\lambda_2 U_1 - M\kappa_8 U_3)dt - \sqrt{M\lambda_2 V_1} dB_2 + - \sqrt{M\kappa_8 V_3} dB_3
  \end{align*}
  for independent Brownian motions $B_1, B_2, B_3$. Then, letting $V^N$ be as above, setting $U^N = N^{1/2}(V^N - V)$, and if $V^N_0 \xRightarrow{N\to\infty} v$ and $U^N_0 \xRightarrow{N\to\infty} u$, we have
  $$V^N \xRightarrow{N\to\infty} V, \qquad U^N \xRightarrow{N\to\infty} U.$$
\end{proposition}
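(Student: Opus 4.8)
The plan is to run the two-step recipe of Sections~\ref{ss:33} and~\ref{ss:34} verbatim on the network of Figure~\ref{fig:CW1}, with $\mathcal S_\bullet = \{S_1,S_2,S_3\}$, $\mathcal S_\circ = \{E_1,E_2,E_3\}$, tracking $V^N = (V_1^N,V_3^N)$ and $Z^N = (Z_1^N,Z_2^N)$ while $Z_3^N = M - Z_1^N - Z_2^N$ and $V_2^N$ are fixed by conservation. First I would write the generator $G^N = G_0^N + N G_2^N$ from \eqref{eq:CRNgen}; here $G_1^N = 0$ and the three-scale structure (reaction rates of orders $N,N^2,N^3$) is carried by the $N$-dependent $\kappa_R'$, so that $\varphi_{E_1}=1,\varphi_{E_3}=2,\varphi_{E_2}=3$ and $Z_2^N,Z_3^N$ are intermediate. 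Before any algebra I would record, via Lemma~\ref{l:ccc} and exactly as in \eqref{eq:eq9}, the occupation bounds $\mathbb E[\int_0^T (Z_2^N)^k dt] = O(N^{-2})$, $\mathbb E[\int_0^T (Z_3^N)^k dt] = O(N^{-1})$ and $N^{-a}\mathbb E[\sup_{t\le T}(Z_i^N)^k]\to 0$; together with $0\le Z_1^N+Z_2^N\le M$ these will later deliver \eqref{eq:T1a}--\eqref{eq:T1d}, while the conserved quantity $V_1^N+V_2^N+V_3^N+N^{-1}(Z_2^N+Z_3^N)$ bounds $V_1^N+V_3^N$ and hence gives compact containment of $V^N$.

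For the LLN, following \eqref{eq:g1} I would take $g(v,z)=\nabla_\bullet f(v)\cdot a(v)\cdot z$ with a matrix $a=(a_{SS'})_{S\in\{S_1,S_3\},S'\in\{E_1,E_2\}}$ and compute $G_0^Nf+G_2^Ng$. After eliminating $z_3=M-z_1-z_2$, the coefficients of $\partial_1 f$ and $\partial_3 f$ are affine in $(z_1,z_2)$, and demanding as in \eqref{eq:g2} that the $z$-dependence vanish yields a linear system of four equations (with $N$-dependent coefficients) for the four entries of $a$. Solving it and letting $N\to\infty$ should reproduce the claimed drift, since the fast chain on $\{E_2,E_3\}$ absorbing into products has splitting probabilities $\kappa_4(\kappa_6+\kappa_7)/(\kappa_4(\kappa_6+\kappa_7)+\kappa_5\kappa_6)$, etc., which are precisely what \eqref{eq:810} encodes; the limit $Gf(v)=\partial_1 f\,(-M(\lambda_1+\lambda_2)v_1+M\kappa_8 v_3)+\partial_3 f\,(M\lambda_2 v_1 - M\kappa_8 v_3)$ then gives $V^N\Rightarrow V$ by Theorem~\ref{T1}. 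I would also note that the finite-$N$ effective drift differs from its limit only through the super-slow reaction $R_3$ (rate $N$ against absorptions at $N^2,N^3$, so $\widehat\beta_{R_3}=-1$), i.e.\ by a term that is $o(N^{-1/2})$; this justifies using $U^N = N^{1/2}(V^N - V)$ directly rather than centering at a $W^N$.

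For the CLT I would pass to the generator $L^N = L_0^N + N^{1/2}L_1^N + N L_2^N$ of $(U^N,Z^N,V)$ from \eqref{eq:genL}, with the deterministic LLN limit $V$ in place of $w$. Taking $g_N(u,z)=\nabla_\bullet f(u)\cdot a(V)\cdot z$ gives $L_1^Nf+L_2^Ng_N=0$ as in Section~\ref{ss:34}, and I would then use the full ansatz \eqref{eq:hNgen}, $h_N=\nabla_\bullet f\cdot b\cdot z+\nabla_\bullet^2 f\cdot(c\cdot z+z^\top d\,z)$, the quadratic-in-$z$ part supplying the diffusion. Imposing via \eqref{eq:GCRN2} that $L_0^Nf+L_1^Ng_N+L_2^Nh_N$ be free of $z$ produces the large linear system for $b,c,d$ — this is the source of the fifteen equations mentioned in the text. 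Letting $N\to\infty$ should give $Lf(u)=\nabla f(u)\cdot Au+\tfrac12\nabla^2 f(u):\Sigma(V)$, with $A$ the Jacobian of the (linear) LLN drift and $\Sigma(V)=\sigma\sigma^\top$ the covariance of the three effective reactions $S_1\to S_2$ (rate $M\lambda_1 V_1$), $S_1\to S_3$ (rate $M\lambda_2 V_1$), $S_3\to S_1$ (rate $M\kappa_8 V_3$); matching $\Sigma$ against the displayed $\sigma$ is a direct $\zeta\zeta^\top$ check. Conditions \eqref{eq:T1a}--\eqref{eq:T1d} follow from the occupation bounds, and compact containment for $U^N$ is obtained exactly as in Example~\ref{ex:simHill5}: represent $U^N$ through its martingale part, bound the quadratic variation using the $O(N^{-1})$ and $O(N^{-2})$ occupation estimates, and close with the Gronwall inequality \eqref{eq:gron}.

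The main obstacle is computational rather than conceptual: the two linear systems carry coefficients at orders $N,N^2,N^3$, so one must solve them with $N$-dependent entries and verify that $a,b,c,d$ admit finite limits while $\varepsilon_f^N$ and the lower-order pieces $N^{-1/2}(L_0^Ng_N+L_1^Nh_N)$ and $N^{-1}L_0^Nh_N$ stay negligible under the occupation bounds. The delicate bookkeeping is tracking which contributions survive the limit — in particular that $R_3$, with $\widehat\beta_{R_3}=-1$, enters only at $O(N^{-1})$ — which is exactly why the authors delegate the algebra to the ancillary {\tt sagemath} file.
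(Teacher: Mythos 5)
Your proposal follows essentially the same route as the paper's proof: the same conserved-quantity reductions and occupation bounds from Lemma~\ref{l:ccc}, the same linear ansatz $g_N$ with four coefficients for the LLN, the same ansatz \eqref{eq:hNgen} for $h_N$ producing the $15$-equation linear system (delegated, as in the paper, to computer algebra), centering at the limiting $V$ rather than a finite-$N$ path $W^N$, and the same quadratic-variation/Gronwall argument for compact containment of $(U^N)_{N=1,2,\dots}$ as in Example~\ref{ex:simHill5}. One minor imprecision: the gap between the finite-$N$ effective drift and its limit --- which the paper records as $L_1^N f + L_2^N g_N = o(N^{-1/2})$ and which is exactly what licenses the centering at $V$ --- is not attributable to reaction $R_3$ alone, since the order-one rates $\kappa_1 v_1,\ \kappa_2 v_1,\ \kappa_8 v_3$ also perturb the fast-subsystem resolution at relative order $N^{-1}$ (against $\kappa_6',\kappa_7'$ of order $N$); but as all such corrections are $O(N^{-1}) = o(N^{-1/2})$, your conclusion is unaffected.
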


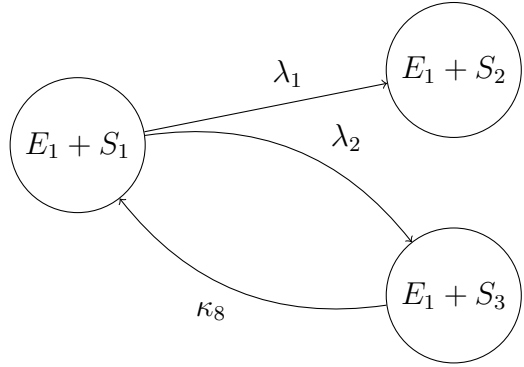
\begin{figure}     
    \begin{center}
    \begin{tikzpicture}[baseline={(current bounding box.center)}, auto]
    %states
    \node[state] (E0+S0)   at (0,0)  {$E_1+S_1$};
    \node[state] (E0+S1)  at (5,1)  {$E_1+S_2$};
    \node[state] (E0+S2)  at (5,-2) {$E_1+S_3$};
    %edges
    \path[->]
     (E0+S0)  edge            node[pos=.7]{$\lambda_1$} (E0+S1)
            edge[bend left] node[pos=.6]{$\lambda_2$} (E0+S2)
     (E0+S2) edge[bend left] node{$\kappa_8$} (E0+S0);
   \end{tikzpicture}
    \end{center}
    \caption{\label{fig:CW2}Reduced system in the main example from \cite{CappellettiWiuf2016}.}
\end{figure}

\begin{remark}
  \begin{enumerate}
      \item The limiting system of the above proposition is the same as the (single-scale) system given in Figure~\ref{fig:CW2}. For the LLN, this is the same limit as obtained in \cite{CappellettiWiuf2016}. The fact that the CLT follows the same single-scale system is new.
      \item We note that our proof not only gives the limit of the system in Figure~\ref{fig:CW1}, but also for different scalings. For example, if we use $N\kappa_4$ (instead of $N^3\kappa_4$), $N\kappa_5$ (instead of $N^3\kappa_5$), $N\kappa_6$ (instead of $N^2\kappa_6$), and $N\kappa_7$ (instead of $N^2\kappa_7$), the techniques we use give a limit result as well. However, this limit is much more complex than Figure~\ref{fig:CW2}.
  \end{enumerate}
\end{remark}

\begin{proof}
Setting $\kappa'_4:=N^{2}\kappa_4$, $\kappa'_5:=N^{2}\kappa_5$, $\kappa'_6:=N\kappa_6$, $\kappa'_7:=N\kappa_7$, the resulting Markov process has the generator for $f \in \mathcal C_c^\infty(\mathbb R^2 \times \mathbb R^2)$
\begin{align*}
  G^Nf(v,z) & = N \kappa_1 v_1 z_1 (f(v-N^{-1} e_1,z-e_1+e_2) - f(v,z)) \\ & \qquad + \kappa_2 N  v_1 z_1(f(v-N^{-1}e_1,z-e_1) - f(v,z)) \\ & \qquad + N \kappa_3 z_2 (f(v + N^{-1}e_1, z + e_1 - e_2) - f(v,z)) \\ & \qquad + N \kappa'_4 z_2(f(v, z + e_1 - e_2) - f(v,z)) \\ &  \qquad + N \kappa'_5 z_2(f(v, z  - e_2) - f(v,z)) \\ & \qquad + N \kappa'_6 (M-z_1-z_2) (f(v+N^{-1}e_3, z + e_1) - f(v,z)) \\ & \qquad + N \kappa'_7  (M-z_1-z_2) (f(v, z+ e_2) - f(v,z)) \\ & \qquad + N\kappa_8 v_3 z_1(f(v + N^{-1}(e_1 - e_3), z) - f(v,z))
%      \\ & = - \kappa_1 v_0 z_0 \partial^{N,-}_0 f(v,z-e_0+e_1) + N \kappa_1 v_0 z_0 (f(v, z - e_0 + e_1) - f(v,z)) \\ & \qquad - \kappa_2 v_0 z_0 \partial^{N,-}_0 f(v,z-e_0+e_3) + \kappa_2 N  v_0 z_0(f(v,z-e_0+e_2) - f(v,z)) \\ & \qquad + \kappa_3 z_1 \partial^{N,+}_0 f(v, z + e_0 - e_1) + N \kappa_3 z_1 (f(v, z + e_0 - e_1) - f(v,z)) \\ & \qquad + \kappa_4 z_1 N^2 \partial^{N,+}_1 f(v, z + e_0 - e_1) + \kappa_4 z_1 N^3(f(v, z + e_0 - e_1) - f(v,z)) \\ &  \qquad + N^3 \kappa_5 z_1 (f(v, z + e_2 - e_1) - f(v,z)) \\ & \qquad + N \kappa_6 z_2 \partial^{N,+}_2 f(v, z + e_0 - e_2) + N^2(\kappa_6 z_2 (f(v, z + e_0 - e_2) - f(v,z))) \\ & \qquad + N^2 z_2 \kappa_7 (f(v, z+ e_1 - e_2) - f(v,z)) \\ & \qquad + \kappa_8 (\partial^{N,+}_0 - \partial^{N,-}_2) f(v, z + e_0 - e_2) 
\\ &  =  (G_0^N + N\cdot G_2^N)f(v,z)
\end{align*}
with (writing $D_1f$ and $D_3f$ for the derivatives with respect to $v_1$ and $v_3$, and by $||D^2f||_\infty$ the supremum of all second derivatives of $f$),
\begin{equation}\label{eq:808}
    \begin{aligned}
      G_0^Nf(v,z) & = - v_1 z_1 \big(\kappa_1 D_1 f(v,z-e_1+e_2) + \kappa_2 D_1 f(v,z-e_1)\big) + \kappa_3 z_2 D_1 f(v, z + e_1 - e_2)\\ & \qquad +\kappa'_6(M-z_1-z_2)D_3 f(v,z+e_1) + \kappa_8 v_3 z_1(D_1 - D_3) f(v, z) + \varepsilon_f^N(v,z),\\ G_2^N f(v,z) & = \kappa_1 v_1 z_1 (f(v,z-e_1+e_2)-f(v,z)) +\kappa_2 v_1 z_1 (f(v,z-e_1)-f(v,z))\\
      & \qquad +\kappa_3 z_2 (f(v,z+e_1-e_2)-f(v,z))+\kappa'_4 z_2 (f(v,z+e_1-e_2)-f(v,z))\\
      & \qquad +\kappa'_5 z_2(f(v,z-e_2)-f(v,z))+\kappa'_6 (M-z_1-z_2) (f(v,z+e_1)-f(v,z))\\
      & \qquad +\kappa'_7  (M-z_1-z_2) (f(v,z+e_2)-f(v,z)),\\
      |\varepsilon_f^N(v,z)| &\leq \tfrac 12   N^{-1} (v_1 z_1 (\kappa_1 + \kappa_2) + \kappa_3 z_2 + \kappa'_6(M-z_1-z_2) + \kappa_8 v_3 z_1)||D^2f||_\infty.
    \end{aligned}
\end{equation}
For bounding the two fast variables, we have that $0\leq Z_1^N, Z_2^N \leq M$, and
\begin{align*}
    \int_0^T (Z_N^1(t))^k dt = O(1), \quad     \int_0^T (Z_N^2(t))^k dt = O(N^{-2}), \quad     \int_0^T (M-(Z_N^1+Z_N^2)(t))^k dt = O(N^{-1}).
\end{align*}
1. The compact containment condition for $(V^N)_{N=1,2,...}$ follows from the fact that $V_1^N + V_2^N + V_3^N + N^{-1}(Z_2^N + Z_3^N)$ is a constant. Moreover, \eqref{eq:T1a}--\eqref{eq:T1d} will follow from the above boundedness assertions. So, for the generator calculations, assume that $f \in \mathcal C_c^\infty(\mathbb R^2)$ only depends on $v$. Taking, for some functions $a_1, ..., a_4$ (recall this ansatz from \eqref{eq:g1})
\begin{align*}
g_N(v,z) & = (a_1z_1 + a_2z_2)D_1 f(v) + (a_3z_1 + a_4z_2)D_3 f(v),    
\end{align*}
we find
\begin{align*}
  ( & G_0^N f + G_2^N g_N)(v,z) - \varepsilon_f^N(v,z)\\ & =  - v_1 z_1 \big(\kappa_1 D_1 f(v) +\kappa_2 D_1 f(v)\big) + \kappa_3 z_2 D_1 f(v) +\kappa'_6(M-z_1-z_2)D_3 f(v)\\ & \qquad+ \kappa_8 v_3 z_1(D_1 - D_3) f(v)+\kappa_1 v_1 z_1((a_2-a_1)D_1 f(v)+(a_4-a_3)D_3 f(v))\\ & \qquad+\kappa_2v_1 z_1(-a_1D_1 f(v)-a_3D_3 f(v)) +\kappa_3 z_2 ((a_1-a_2)D_1 f(v)+(a_3-a_4)D_3 f(v))\\ & \qquad+\kappa'_4 z_2((a_1-a_2)D_1 f(v)+(a_3-a_4)D_3 f(v))+\kappa'_5 z_2(-a_2D_1 f(v)-a_4D_3 f(v))\\ & \qquad+\kappa'_6 (M-z_1-z_2)(a_1D_1 f(v)+a_3 D_3 f(v))+\kappa'_7  (M-z_1-z_2)(a_2D_1 f(v)+a_4D_3 f(v))\\
  &=M((\kappa'_6 a_1+\kappa'_7 a_2)D_1 f(v)+(\kappa'_6(a_3+1)+\kappa'_7a_4) D_3 f(v)) \\ &\qquad +z_1\Big(D_1 f(v)\big(-v_1 \kappa_1 -v_1\kappa_2 +\kappa_8 v_3 +\kappa_2v_1(a_2-a_1)-\kappa_2 v_1 a_1 -\kappa'_6 a_1-\kappa'_7 a_2\big)\\ &\qquad +D_3 f(v)\big(-\kappa'_6 -\kappa_8v_3  +\kappa_1v_1(a_4-a_3)-\kappa_2 v_1 a_3 -\kappa'_6 a_3-\kappa'_7 a_4\big)\Big)\\ &\qquad +z_2\Big(D_1 f(v)\big( \kappa_3+\kappa_3 (a_1-a_2)+\kappa'_4(a_1-a_2)-\kappa'_5 a_2-\kappa'_6 a_1-\kappa'_7 a_2\big)\\ &\qquad +D_3 f(v)\big(-\kappa'_6 +\kappa_3 (a_3-a_4)+\kappa'_4(a_3-a_4)-\kappa'_5 a_4-\kappa'_6 a_3-\kappa'_7 a_4\big)\Big).
\end{align*}
If we set $a_1,...,a_4$ such that the terms proportional to $z_1D_1, z_1D_3,z_3D_1$ and $z_3D_3$ vanish, we obtain
%\begin{align*}
%    a_{1} & = \frac{(\kappa_3+\kappa'_4+\kappa'_5+\kappa'_7) \kappa_8 v_3-\kappa_3\kappa'_7-(\kappa_2(\kappa_3+\kappa'_4+\kappa'_5)+\kappa_1(\kappa'_4+\kappa'_5)+(\kappa_1+\kappa_2)\kappa'_7)v_1}{(\kappa_3+\kappa'_4+\kappa'_5)\kappa'_6+(\kappa_3+\kappa'_4)\kappa'_7+(\kappa_2(\kappa_3+\kappa'_4+\kappa'_5)+\kappa_1\kappa'_5+\kappa_1\kappa'_6+(\kappa_1+\kappa_2)\kappa'_6)v_1}\\
%    a_{1} &= \frac{(\kappa_3+\kappa'_4-\kappa'_6)\kappa_8v_3+\kappa_3\kappa'_6-(\kappa_1\kappa'_4+\kappa_2\kappa'_4-(\kappa_1+\kappa_2)\kappa'_6)v_1}{(\kappa_3+\kappa'_4+\kappa'_5)\kappa'_6+(\kappa_3+\kappa'_4)\kappa'_7+(\kappa_2(\kappa_3+\kappa'_4+\kappa'_5)+\kappa_1\kappa'_5+\kappa_1\kappa'_6+(\kappa_1+\kappa_2)\kappa'_6)v_1}\\ 
%    b_{1} & = -\frac{\kappa_1\kappa'_6v_1+(\kappa_3+\kappa'_4+\kappa'_5+\kappa'_7)\kappa_8v_3+(\kappa_3+\kappa'_4+\kappa'_5)\kappa'_6}{(\kappa_3+\kappa'_4+\kappa'_5)\kappa'_6+(\kappa_3+\kappa'_4)\kappa'_7+(\kappa_2(\kappa_3+\kappa'_4+\kappa'_5)+\kappa_1\kappa'_5+\kappa_1\kappa'_6+(\kappa_1+\kappa_2)\kappa'_6)v_1}, \\
%    b_{1} &= -\frac{(\kappa_1+\kappa_2)\kappa'_6v_1+(\kappa_3+\kappa'_4-\kappa'_6)\kappa_8v_3+(\kappa_3+\kappa'_4)\kappa'_6}{(\kappa_3+\kappa'_4+\kappa'_5)\kappa'_6+(\kappa_3+\kappa'_4)\kappa'_7+(\kappa_2(\kappa_3+\kappa'_4+\kappa'_5)+\kappa_1\kappa'_5+\kappa_1\kappa'_6+(\kappa_1+\kappa_2)\kappa'_6)v_1}.
%\end{align*}
\begin{align*}
Gf(v) & := \lim_{N\to\infty} (G_0^Nf + G_1^Ng)(v,z) - \varepsilon_f^N(v,z) \\ & =  M\lim_{N\to\infty} (a_1\kappa'_6+a_2\kappa'_7)D_1 f(v)+M(\kappa'_6(a_3+1)+\kappa'_7)D_3 f(v) \\ & = M(-\lambda_1 v_1D_1f + \lambda_2 v_1(D_3f - D_1f) + \kappa_8 v_3(D_1f - D_3f))
\end{align*}
for $\lambda_1, \lambda_2$ as in \eqref{eq:810}.
Since $G$ is the generator of $V$, we are done.

~

\noindent
2. The generator of $(U^N, Z^N)$ reads
\begin{align*}
    L^Nf(u,z) & = 
    \kappa_1 (Nv_1 + N^{1/2} u_1) z_1 (f(u-N^{-1/2} e_1,z-e_1+e_2) - f(u,z)) \\ & \qquad + \kappa_2 (N  v_1 + N^{1/2}u_1) z_1(f(u-N^{-1/2}e_1,z-e_1) - f(u,z)) \\ & \qquad + N \kappa_3 z_2 (f(u + N^{-1/2}e_1, z + e_1 - e_2) - f(u,z)) \\ & \qquad + N \kappa'_4 z_2(f(u, z + e_1 - e_2) - f(u,z)) \\ &  \qquad + N \kappa'_5 z_2(f(u, z  - e_2) - f(u,z)) \\ & \qquad + N \kappa'_6 (M-z_1-z_2) (f(u+N^{-1/2}e_3, z + e_1) - f(u,z)) \\ & \qquad + N \kappa'_7  (M-z_1-z_2) (f(u, z+ e_2) - f(u,z)) \\ & \qquad + \kappa_8 (Nv_3 + N^{1/2}u_3) z_1(f(u + N^{-1/2}(e_1 - e_3), z) - f(u,z)) \\ & \qquad - MN^{1/2}((-(\lambda_1+\lambda_2) v_1 + \kappa_8 v_3)D_1f(u) + (\lambda_2 v_1 - \kappa_8 v_3)D_3f(u)) \\ & = (L_0^Nf+N^{1/2}L_1^Nf+NL_2^Nf)(u,z)
\end{align*} 
with (writing $D_{11},D_{13}$ and $D_{33}$ for all the second derivatives with respect to $v_1$ and $v_3$, and $||D^3f||_\infty$ for the supremum of all third derivatives of f)
\begin{align*}
  L_0^Nf(u,z)&=-\kappa_1u_1z_1 D_1f(u,z-e_1+e_2)+\tfrac 12\kappa_1v_1z_1 D_{11}f(u,z-e_1+e_2) -\kappa_2u_1z_1 D_1f(u,z-e_1) \\  &\qquad +\tfrac 12\kappa_2 v_1z_1 D_{11}f(u,z-e_1)+\tfrac 12\kappa_3 z_2 D_{11}f(u, z + e_1 - e_2) \\ & \qquad + \tfrac 12 \kappa_6'(M-z_1-z_2) D_{33}f(u,z+e_1) + \kappa_8 u_3z_1(D_1 - D_3)f(u,z) \\ & \qquad + \tfrac 12 \kappa_8 v_3 z_1(D_{11} - 2D_{13} + D_{33})f(u,z) +  \varepsilon_f^N(u,z,v),\\
  L_1^Nf(u,z)&=-\kappa_1 v_1z_1 D_1f(u,z-e_1+e_2)+\kappa_1u_1z_1(f(u,z-e_1+e_2)-f(u,z)) \\   &\qquad - \kappa_2v_1z_1D_1f(u,z-e_1) +\kappa_2u_1z_1(f(u,z-e_1)-f(u,z))+\kappa_3 z_2D_1f(u,z+e_1-e_2) \\ & \qquad + \kappa_6'(M-z_1-z_2)D_3f(u,z+e_1)  + \kappa_8v_3z_1(D_1-D_3)f(u,z) \\ & \qquad- M((-(\lambda_1+\lambda_2) v_1 + \kappa_8 v_3)D_1f(u,z) + (\lambda_2 v_1 - \kappa_8 v_3)D_3f(u,z)),\\
  L_2^Nf(u,z)&=\kappa_1v_1z_1(f(u,z-e_1+e_2)-f(u,z))+\kappa_2v_1z_1(f(u,z-e_1)-f(u,z))\\  &\qquad +\kappa_3z_2(f(u,z+e_1-e_2)-f(u,z))+\kappa'_4 z_2 (f(u,z+e_1-e_2)-f(u,z))\\
  &\qquad +\kappa'_5 z_2(f(u,z-e_2)-f(u,z)) + \kappa_6'(M-z_1-z_2)(f(u,z+e_1) - f(u,z)) \\ & \qquad + \kappa'_7  (M-z_1-z_2) (f(u,z+e_2)-f(u,z)), \\ |\varepsilon_f^N(u,z,v)| & \leq N^{-1/2} (((\kappa_1 + \kappa_2) u_1 z_1 + \kappa_8 u_3 z_1)||D^2f||_\infty \\ & \qquad \qquad \qquad + \tfrac 16 (v_1 z_1 (\kappa_1 + \kappa_2) + \kappa_3 z_2 + \kappa'_6(M-z_1-z_2) + \kappa_8 v_3 z_1)||D^3f||_\infty).
\end{align*}
With $g_N(u,z) = (a_1z_1 + a_2z_2)D_1f(u) + (a_3z_1 + a_4z_2)D_3f(u)$, this time depending on $u$ instead of $v$ we obtain, if $f$ only depends on $u$, that $L_1^Nf+L_2^Ng=o(N^{-1/2})$. We choose the ansatz (compare with \eqref{eq:hNgen})
\begin{align*}
  h_N(u,z) & =(b_1z_1+b_2z_2)D_1f(u) + (b_3z_1+b_4z_2)D_3f(u) \\ & \qquad + \Big(c_1 z_1 + c_2 z_2 + \binom{z_1}{2} d_1 + \binom{z_2}{2} d_2 + \binom{z_1+z_2}{2} d_3 \Big) D_{11} f(u)
  \\ & \qquad + \Big(c_3 z_1 + c_4 z_2 + \binom{z_1}{2} d_4 + \binom{z_2}{2} d_5 + \binom{z_1+z_2}{2} d_6 \Big) D_{33} f(u)
  \\ & \qquad + \Big(c_5 z_1 + c_6 z_2 + \binom{z_1}{2} d_7 + \binom{z_2}{2} d_8 + \binom{z_1+z_2}{2} d_9 \Big) D_{13} f(u)
\end{align*}
for some $b_1,...,b_4, c_1,...,c_{6}, d_1,...,d_9$. As in the above examples, plugging $f$, $g_N$ with $a_1,...,a_4$ as above, and $h_N$ in $L_0f + L_1g_N + L_2h_N$ leads to a term which depends only on $u$ for the correct choice of $b_1,...,b_4, c_1,...,c_{6}, d_1,...,d_9$. The corresponding linear system can readily be solved, and the result is (leaving all computations to a computer algebra system such as sagemath) leads for $N\to\infty$ to the limit
\begin{align*}
  Lf(u) & = \lim_{N\to\infty} (L_0^Nf+L_1^Ng+L_2^Nh - \varepsilon_f^N)(u) \\ & = M(-(\lambda_1 + \lambda_2)u_1 + \kappa_8 u_3) D_1f(u) + M(\lambda_2 u_1 - \kappa_8 u_3) D_3f(u) \\ & \qquad + \tfrac 12 (\lambda_1v_1 D_{11}f(u) + (\lambda_2 v_1 + \kappa_8 v_3) (D_{11} - 2D_{13} + D_{33})f(u).
\end{align*}
This is the generator of $(U_1,U_2)$. 
Again, showing the compact containment condition of $(U^N)_{N=1,2,...}$ as well as \eqref{eq:T1a}--\eqref{eq:T1d} works with the same arguments we already saw in the first examples.
\end{proof}

\begin{appendix}
\section{An immigration death process}
\label{app:A}
We will now give a tool which helps to derive the conditions \eqref{eq:T1a}--\eqref{eq:T1d} in concrete examples. 
\begin{lemma}[A process with immigration and death]
\label{l:ccc}Let $\beta \geq \alpha \geq 0, C_0\geq 0, C_1 > 0$, and $k, T, a >0$, and for each $N\in\mathbb N$, let $X^N = (X^N_t)_{t\geq 0}$ be a Markov-jump-process which increases at time $t$ by $1$ at rate $C_0^N N^\alpha$ and decreases at time $t$ by $1$ at rate $C_1^N N^\beta X^N(t)$ and $X^N_0=0$, where $0\leq C_0^N \leq C_0$ and $C_1^N \geq C_1$. Then, 
\begin{align}
    \label{eq:ccc1}
     \sup_{N\in\mathbb N}\mathbb{E}\Big[\int_0^T N^{\beta-\alpha}(X^N_t)^kdt\Big]<\infty
\end{align}
and 
\begin{align}
    \label{eq:ccc2}
    N^{-a}\mathbb E[\sup_{0\leq t\leq T} (X^N_t)^k] \xrightarrow{N\to\infty} 0.
\end{align}
\end{lemma}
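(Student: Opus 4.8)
The plan is to read off everything from the factorial moments of $X^N$, which satisfy an exact closed recursion. Write $P := C_0^N N^\alpha$ for the (state‑independent) birth rate and $D := C_1^N N^\beta$ for the per‑capita death rate, so the generator is $A^N g(x) = P(g(x+1)-g(x)) + Dx(g(x-1)-g(x))$, and set $\rho := P/D = (C_0^N/C_1^N)N^{\alpha-\beta}$, which by hypothesis obeys $\rho \le (C_0/C_1)N^{\alpha-\beta}\le C_0/C_1$ because $\alpha\le\beta$. Applying $A^N$ to the falling factorial $g(x)=x_{[j]}$ and using the discrete identities $(x{+}1)_{[j]}-x_{[j]}=j\,x_{[j-1]}$ and $x\big((x{-}1)_{[j]}-x_{[j]}\big)=-j\,x_{[j]}$, I obtain the clean relation $A^N x_{[j]} = jP\,x_{[j-1]} - jD\,x_{[j]}$. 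Hence the factorial moments $\phi_j(t):=\mathbb E[(X^N_t)_{[j]}]$ solve the triangular linear system $\phi_j' = jP\phi_{j-1}-jD\phi_j$ with $\phi_0\equiv 1$ and $\phi_j(0)=0$ for $j\ge 1$ (using $X^N_0=0$). Since $P=D\rho$, a one‑line comparison argument gives $\phi_j'\le jD(\rho^j-\phi_j)$ whenever $\phi_{j-1}\le\rho^{j-1}$, so by induction $\phi_j(t)\le\rho^j$ for all $t$ (indeed one checks $\phi_j(t)=\big(\rho(1-e^{-Dt})\big)^j$, the transient M/M/$\infty$ formula). I will make this rigorous by localizing at $\tau_n=\inf\{t:X^N_t\ge n\}$ in Dynkin's formula and letting $n\to\infty$; the process is non‑explosive since the birth rate is bounded.

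Part \eqref{eq:ccc1} is then immediate. Converting falling factorials to powers via the Stirling numbers of the second kind $S(k,j)$, one has $\mathbb E[(X^N_t)^k]=\sum_{j=1}^k S(k,j)\phi_j(t)\le\sum_{j=1}^k S(k,j)\rho^j$. The key numerical observation is that $N^{\beta-\alpha}\rho^j=(C_0^N/C_1^N)^j N^{(\beta-\alpha)(1-j)}\le (C_0/C_1)^j$ for every $j\ge 1$, since $\beta\ge\alpha$ forces the exponent $(\beta-\alpha)(1-j)\le 0$. Therefore $N^{\beta-\alpha}\int_0^T\mathbb E[(X^N_t)^k]\,dt\le T\sum_{j=1}^k S(k,j)(C_0/C_1)^j$, a bound independent of $N$.

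For \eqref{eq:ccc2} the naive estimate $\sup_{t\le T}X^N_t\le\#\{\text{up‑jumps}\}\sim\mathrm{Poisson}(PT)$ yields only $O(N^{\alpha k})$ and is useless; the point is that the strong death rate confines $X^N$ near its tiny mean $\rho$, making excursions to height $m$ super‑polynomially rare. I will quantify this by an up‑crossing bound: since $X^N$ moves by unit steps from $X^N_0=0$, reaching level $m$ forces a jump $m-1\to m$, whose expected count on $[0,T]$ equals $P\int_0^T\mathbb P(X^N_s=m-1)\,ds$; together with Markov's inequality through the factorial moment, $\mathbb P(X^N_s=m-1)\le \mathbb E[(X^N_s)_{[m-1]}]/(m-1)!\le \rho^{m-1}/(m-1)!$, this gives
\[
  \mathbb P\big(\sup\nolimits_{t\le T}X^N_t\ge m\big)\le PT\,\frac{\rho^{m-1}}{(m-1)!}\le C_0 T\,N^{\alpha}\,\frac{(C_0/C_1)^{m-1}}{(m-1)!}.
\]
Writing $\mathbb E[\sup_{t\le T}(X^N_t)^k]\le k\sum_{m\ge 1}m^{k-1}\min\!\big(1,\,C_0TN^\alpha (C_0/C_1)^{m-1}/(m-1)!\big)$ and using $(m-1)!\ge((m-1)/e)^{m-1}$, the summand falls below $N^{-a-1}$ once $m\ge m(N):=c\log N$ for a suitable $c$; bounding the terms below $m(N)$ by $1$ and the tail by a geometric series, I get $\mathbb E[\sup_{t\le T}(X^N_t)^k]=O((\log N)^k)$, whence $N^{-a}\mathbb E[\sup_{t\le T}(X^N_t)^k]=O(N^{-a}(\log N)^k)\to 0$ for every $a>0$.

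The main obstacle is precisely this last step: one must recognise that confinement makes the \emph{height} tails factorial in $m$, and then combine the factorial tail with the trivial bound $\le 1$, with the crossover at $m\asymp\log N$, so as to convert ``super‑polynomial in the height'' into ``poly‑logarithmic growth of the maximum'' and hence decay faster than any power $N^{-a}$. The falling‑factorial computation is what renders both the moment bound and the tail bound exact and painless; the only remaining care is the standard localization justifying Dynkin's formula on the unbounded state space.
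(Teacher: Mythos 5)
Your proof is correct, and it reaches both conclusions by a genuinely different route than the paper. For \eqref{eq:ccc1} the paper also works with moment ODEs derived from the generator, but it argues by induction on $k$: the case $k=1$ is solved exactly, then $\mathbb E[(X_t^N+1)^k-(X_t^N)^k]$ is bounded using the case $k-1$ and Gronwall's lemma closes the induction. Your use of falling factorials instead makes the moment system triangular and exactly solvable (you recover the M/M/$\infty$ transient law, $X^N_t$ Poisson with mean $\rho(1-e^{-Dt})$), which is cleaner and gives explicit constants via Stirling numbers. The real divergence is in \eqref{eq:ccc2}. The paper first reduces to $\beta=\alpha$ by stochastic domination and then uses a population argument: the oldest individual alive at the first passage of level $n$ must have witnessed $n$ immigrations before its own death, an event of probability $\gamma_N^n$ with $\gamma_N=C_0^N/(C_0^N+C_1^N)<1$, yielding the tail bound $\mathbb P(\sup_{t\le T}X^N_t>n)\le 1-\exp(-CN^\alpha\gamma_N^nT)$, i.e.\ a geometric-in-$n$ tail carrying a factor $N^\alpha$. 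You instead bound the probability of reaching level $m$ by the expected number of up-crossings $m-1\to m$, namely $P\int_0^T\mathbb P(X^N_s=m-1)\,ds$, and control the integrand by the factorial moment, obtaining a factorial-in-$m$ tail, again with the prefactor $N^\alpha$. From there the two proofs share the same endgame: a layer-cake/summation split at a level of order $\log N$ (the paper cuts at $(\log N)^2$ and gets $C(\log N)^{2k}$; you cut at $c\log N$ and get $O((\log N)^k)$), after which the poly-logarithmic bound is annihilated by $N^{-a}$. What each buys: your factorial-moment machinery gives a sharper tail, avoids the domination/reduction step to $\beta=\alpha$, and lets a single computation serve both parts; the paper's argument is more probabilistic and requires no combinatorial identities. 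Both proofs are complete modulo the routine localization justifying Dynkin's formula on the unbounded state space, which you correctly flag and which is harmless here since $X^N$ is dominated by a Poisson process of bounded rate.
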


%We have that $M^N =: (M^N(t))_{t\geq 0}$ with $M^N(t) := X^N(t))- \int_0^t  (C_0^N N^\alpha - C_1^N N^\beta X^N(s))ds$ is a martingale. So, 
%\begin{align*}
%    \mathbb E[\sup_{0\leq t\leq T} X^N(t)] & \leq \mathbb E[\sup_{0\leq t\leq T} M^N(t)] + \mathbb E\Big[\int_0^T \int_0^t  (C_0^N N^\alpha - C_1^N N^\beta X^N(s)ds| 
%\end{align*}

\begin{proof}
We use some constant $C$, which does not depend on $N$, but which may change from line to line. With Fubini, we see that \eqref{eq:ccc1} follows from 
\begin{align*}
    \mathbb E[(X_t^N)^k] \leq C N^{\alpha-\beta}, \qquad t\geq 0.
\end{align*}
We start the proof of this assertion by calculating 
\begin{align*}
    \frac{d}{dt} \mathbb E[(X_t^N)^k] = C_0^N N^\alpha \mathbb E[(X_t^N+1)^{k} - (X_t^N)^{k}] + C_1^N N^\beta \mathbb E[X_t^N(X_t^N-1)^k - (X_t^N)^{k+1}].
\end{align*}
Using induction, we start with $k=1$ and can solve this differential equation with $\mathbb E[X^N_0] = 0$, i.e.
$$\mathbb{E}[X_t^N]=\frac{C_0^N}{C_1^N}N^{\alpha-\beta}(1 - e^{-C_1^N N^{\beta}t}) \leq C N^{\alpha-\beta}.
$$
From here, if we have shown the case $k-1$, we see that $\mathbb E[(X_t^N+1)^{k} - (X_t^N)^{k}] \leq C$, and the differential equation yields
\begin{align*}
    \frac{d}{dt} \mathbb E[(X_t^N)^k] \leq C N^\alpha + C_1 N^\beta \mathbb E[(X_t^N)^k]. 
\end{align*}
From here, Gronwall's Lemma shows the assertion for $k$.

For \eqref{eq:ccc2}, we only need to consider the case $\beta = \alpha$ since this process dominates the process with $\beta > \alpha$. Note that $X^N$ can be seen as the size of a population undergoing immigration (at individual rate $C_0 N^\alpha$) and death (at individual rate $C_1 N^\beta = C_1 N^\alpha$). Let $\tau_n := \inf\{t: X^N_t > n\}$ and consider $X^N_{\tau_n}$. At this time, the population consists of individuals immigrating at times before $\tau_n$, and we consider the oldest such individual. Necessarily, this individual has seen $n$ individuals immigrating before $\tau_n$ which has probability $\gamma_N^n$ for $\gamma_N := C_0^N / (C_0^N + C_1^N) \leq C_0 / (C_0 + C_1)=:\gamma$. The rate by which individuals immigrate which see $n$ immigration events before their own death is $C_0^N N^\alpha \gamma^n_N$, i.e.
\begin{align*}
    \mathbb P(\sup_{0\leq t\leq T} X^N_t > n) & \leq 1 - \exp\Big( - C_1 N^\alpha \gamma^n_N T\Big).
\end{align*}
We therefore write
\begin{align*}
    \mathbb E[\sup_{0\leq t\leq T} (X^N_t)^k] & = k \int_0^\infty x^{k-1} \mathbb P(\sup_{0\leq t\leq T} X^N(t) > x) dx
    \\ & \leq k\int_0^{(\log N)^2} x^{k-1} dx + 2k C_1 N^\alpha\int_{(\log N)^2}^\infty x^{k-1} \gamma^x dx \leq C(\log N)^{2k},
\end{align*}
and the assertion follows.
\end{proof}

\end{appendix}

\subsubsection*{Acknowledgements}
We thank Martin Hutzenthaler for valuable comments on possible improvements of Theorem~\ref{T1}.

%\bibliographystyle{chicago} 
%\bibliography{chem}

\begin{thebibliography}{}

\bibitem[\protect\citeauthoryear{Anderson and Kurtz}{Anderson and
  Kurtz}{2015}]{AndersonKurtz2015}
Anderson, D. and T.~Kurtz (2015).
\newblock {\em Stochastic Analysis of Biochemical Systems}.
\newblock Springer.

\bibitem[\protect\citeauthoryear{Ball, Kurtz, Popovic, and Rempala}{Ball
  et~al.}{2006}]{BallKurtzPopovicRempala2006}
Ball, K., T.~Kurtz, L.~Popovic, and G.~Rempala (2006).
\newblock {Asymptotic analysis of multiscale approximations to reaction
  networks}.
\newblock {\em Annals of Applied Probability\/}~{\em 16}, 1925--1961.

\bibitem[\protect\citeauthoryear{Cappelletti and Wiuf}{Cappelletti and
  Wiuf}{2016}]{CappellettiWiuf2016}
Cappelletti, D. and C.~Wiuf (2016).
\newblock {Elimination of intermediate species in multiscale stochastic
  reaction networks}.
\newblock {\em Ann. Appl. Probab.\/}~{\em 26}, 2915--2958.

\bibitem[\protect\citeauthoryear{Developers, Stein, Joyner, Kohel, Cremona, and
  Eröcal}{Developers et~al.}{2020}]{sagemath}
Developers, T.~S., W.~Stein, D.~Joyner, D.~Kohel, J.~Cremona, and B.~Eröcal
  (2020).
\newblock Sagemath, version 9.0.
\newblock \url{http://www.sagemath.org}.

\bibitem[\protect\citeauthoryear{Haseltine and Rawlings}{Haseltine and
  Rawlings}{2002}]{HaseltineRawlings2002}
Haseltine, E. and J.~Rawlings (2002).
\newblock {Approximate simulation of coupled fast and slow reactions for
  stochasticchemical kinetics}.
\newblock {\em J. Chem. Phys.\/}~{\em 117}, 6959--6969.

\bibitem[\protect\citeauthoryear{Hill}{Hill}{1910}]{Hill1910}
Hill, A.~V. (1910).
\newblock {The possible effects of the aggregation of the molecules of
  hæmoglobin on its dissociation curves}.
\newblock {\em J Physiol\/}~{\em 40\/}(iv--vii).

\bibitem[\protect\citeauthoryear{Hoessly, Wiuf, and Xia}{Hoessly
  et~al.}{2021}]{HoesslyWiufXia2021}
Hoessly, L., C.~Wiuf, and P.~Xia (2021).
\newblock {On the Sum of Chemical Reactions.}
\newblock \url{https://arxiv.org/pdf/2105.04353.pdf}.

\bibitem[\protect\citeauthoryear{Hutzenthaler, Pfaffelhuber, and
  Printz}{Hutzenthaler et~al.}{2021}]{HutzenthalerPfaffelhuberPrintz2021}
Hutzenthaler, M., P.~Pfaffelhuber, and C.~Printz (2021).
\newblock {Stochastic averaging for multiscale Markov processes with an
  application to a Wright-Fisher model with fluctuating selection}.
\newblock \url{https://arxiv.org/abs/1504.01508}.

\bibitem[\protect\citeauthoryear{Kang}{Kang}{2012}]{Kang2012}
Kang, H.-W. (2012).
\newblock {A multiscale approximation in a heat shock response model of {\it
  E.~coli.}}
\newblock {\em BMC Systems Biology\/}~{\em 21(6)}, 143.

\bibitem[\protect\citeauthoryear{Kang and Kurtz}{Kang and
  Kurtz}{2013}]{KangKurtz2013}
Kang, H.~W. and T.~Kurtz (2013).
\newblock {Separation of time-scales and model reduction for stochastic
  reaction models}.
\newblock {\em Ann. Appl. Probab.\/}~{\em 23}, 529--583.

\bibitem[\protect\citeauthoryear{Kang, Kurtz, and Popovic}{Kang
  et~al.}{2014}]{KangKurtzPopovic2014}
Kang, H.-W., T.~Kurtz, and L.~Popovic (2014).
\newblock Central limit theorems and diffusion approximations for multiscale
  markov chain models.
\newblock {\em Ann. Appl. Probab.\/}~{\em 24}, 721--759.

\bibitem[\protect\citeauthoryear{Kurtz}{Kurtz}{1970a}]{Kurtz1971}
Kurtz, T. (1970a).
\newblock Limit theorems for sequences of jump markov processes approximating
  ordinary differential processes.
\newblock {\em Journal of Applied Probability\/}~{\em 8}, 344--356.

\bibitem[\protect\citeauthoryear{Kurtz}{Kurtz}{1970b}]{Kurtz1970}
Kurtz, T. (1970b).
\newblock Solutions of ordinary differential equations as limits of pure jump
  markov processes.
\newblock {\em Journal of Applied Probability\/}~{\em 7}, 49--58.

\bibitem[\protect\citeauthoryear{Kurtz}{Kurtz}{1992}]{Kurtz1992}
Kurtz, T. (1992).
\newblock {Averaging for martingale problems and stochastic approximation}.
\newblock In {\em Applied stochastic analysis (New Brunswick, NJ, 1991), volume
  177 of Lecture Notes in Control and Inform. Sci.}, pp.\  186--209. Springer,
  Berlin.

\bibitem[\protect\citeauthoryear{Rao and Arkin}{Rao and
  Arkin}{2003}]{RaoArkin2003}
Rao, C. and A.~Arkin (2003).
\newblock {Stochastic chemical kinetics and the quasi-steady-state
  assumption:Application to the Gillespie algorithm}.
\newblock {\em J. Chem. Phys.\/}~{\em 118}, 4999--5011.

\bibitem[\protect\citeauthoryear{Roussel}{Roussel}{2019}]{Heineken2019}
Roussel, M.~R. (2019).
\newblock {{H}eineken, {T}suchiya and {A}ris on the mathematical status of the
  pseudo-steady state hypothesis: {A} classic from volume 1 of {M}athematical
  {B}iosciences}.
\newblock {\em Math Biosci\/}~{\em 318}, 108274.

\bibitem[\protect\citeauthoryear{Saez, Wiuf, and Feliu}{Saez
  et~al.}{2017}]{SaezWiufFeliu2017}
Saez, M., C.~Wiuf, and E.~Feliu (2017).
\newblock {{G}raphical reduction of reaction networks by linear elimination of
  species}.
\newblock {\em J. Math. Biol.\/}~{\em 74}, 195--237.

\end{thebibliography}

\end{document}